\documentclass[11pt,amssymb,amsfont,a4paper]{article}
\usepackage{latexsym,amssymb,amsmath,amsthm,color}
\usepackage{geometry}
%\usepackage[notcite,notref]{showkeys}
%\geometry{left=2cm,top=2cm,right=2cm,bottom=2cm}
%\usepackage{setspace}
%\onehalfspacing
%\usepackage{graphics}
\usepackage{url}
\usepackage[utf8]{inputenc}
\usepackage{authblk}
%\usepackage{bbm}
%\usepackage{booktabs}
%\usepackage{multirow}
%\usepackage{tikz}
%\usepackage{pstricks,pst-node}
%\usepackage{times}
%\usepackage{listings}
%\lstset{
%   breaklines=true,
%   basicstyle=\ttfamily}

\theoremstyle{definition}
\newtheorem{definition}{Definition}
\newtheorem{example}{Example}

\theoremstyle{plain}
\newtheorem{theorem}{Theorem}
\newtheorem{proposition}{Proposition}
\newtheorem{lemma}{Lemma}
\newtheorem{remark}{Remark}
\newtheorem{corollary}{Corollary}

\title{Classification of multivariate skew polynomial rings over finite fields via affine transformations of variables}
\author[1]{Umberto Mart{\'i}nez-Pe\~{n}as \thanks{umberto@ece.utoronto.ca}}
\affil[1]{Dept.\ of Electrical \& Computer Engineering, University of Toronto, Canada}

\date{}

\begin{document}

\maketitle

\begin{abstract}
In this work, free multivariate skew polynomial rings are considered, together with their quotients over ideals of skew polynomials that vanish at every point (which includes minimal multivariate skew polynomial rings). We provide a full classification of such multivariate skew polynomial rings (free or not) over finite fields. To that end, we first show that all ring morphisms from the field to the ring of square matrices are diagonalizable, and that the corresponding derivations are all inner derivations. Secondly, we show that all such multivariate skew polynomial rings over finite fields are isomorphic as algebras to a multivariate skew polynomial ring whose ring morphism from the field to the ring of square matrices is diagonal, and whose derivation is the zero derivation. Furthermore, we prove that two such representations only differ in a permutation of the field automorphisms appearing in the corresponding diagonal. The algebra isomorphisms are given by affine transformations of variables and preserve evaluations and degrees. In addition, ours proofs show that the simplified form of multivariate skew polynomial rings can be found computationally and explicitly.

\textbf{Keywords:} Affine transformations, derivations, free polynomial rings, Moore matrices, multivariate skew polynomial rings, Vandermonde matrices.

\textbf{MSC:} 11T06, 11T30, 12E10, 12E20. 
\end{abstract}

\section{Introduction} \label{sec intro}

\textit{Univariate skew polynomial rings} over a division ring $ \mathbb{F} $ (or any ring in general) were formally introduced in \cite{ore} and further studied over finite fields $ \mathbb{F}_q $ in \cite{orespecial}, although some explicit applications of this latter case can be found much earlier \cite{moore}. These rings are ``non-commutative polynomial rings'' $ \mathcal{R} $ in one variable $ x $ (formally, $ \mathcal{R} $ is a left vector space over $ \mathbb{F} $ with infinite left basis $ \{ x^i \mid i = 0,1,2, \ldots \} $) such that addition is as usual and multiplication satisfies the following: $ \mathcal{R} $ is a (commutative or non-commutative) ring with identity $ 1 = x^0 $, where the $ i $th power of $ x = x^1 $ corresponds to the monomial $ x^i $ and the degree of a product of two polynomials is the sum of their degrees. 

Such algebraic structures on the left vector space $ \mathcal{R} $ with left basis $ \{ x^i \mid i = 0,1,2, \ldots \} $ correspond bijectively with pairs $ (\sigma, \delta) $, where $ \sigma : \mathbb{F} \longrightarrow \mathbb{F} $ is a ring endomorphism and $ \delta : \mathbb{F} \longrightarrow \mathbb{F} $ is a $ \sigma $-derivation. In other words, $ \delta $ is additive and, for all $ a,b \in \mathbb{F} $, it holds that $ \delta(ab) = \sigma(a) \delta(b) + \delta(a)b $, for all $ a,b \in \mathbb{F} $. This bijection is due to the relation $ x a = \sigma(a) x + \delta(a) $, for all $ a \in \mathbb{F} $, that needs to happen in $ \mathcal{R} $ in order to satisfy the properties mentioned above. See \cite[pp. 481--482]{ore} for more details.

Since univariate skew polynomial rings are right Euclidean domains \cite[p. 483]{ore}, the \textit{evaluation} of a skew polynomial $ F(x) \in \mathcal{R} $ over a point $ a \in \mathbb{F} $ can be defined as the remainder of the Euclidean division of $ F(x) $ by $ x - a $ on the right. This concept of evaluation was first considered in full generality by Lam and Leroy in \cite{lam, lam-leroy}. Among others, this notion of evaluation helps unify the study of Vandermonde, Moore and Wronskian matrices \cite{lam, lam-leroy} and more general matrix types (see \cite[p. 604]{linearizedRS} for instance), and gives a natural framework for Hilbert 90 Theorems \cite{hilbert90} and pseudolinear transformations \cite{leroy-pol}. Specifically over finite fields, this notion has provided maximum rank distance (error-correcting) codes \cite{gabidulin} (based on Moore matrices \cite{moore}), and maximum sum-rank distance codes \cite{linearizedRS} with finite-field sizes that are not exponential in the code length, in constrast with \cite{gabidulin} (see \cite[Sec. 4.2]{linearizedRS}). 

Recently, two natural extensions of this concept of evaluation have been given for multivariate skew polynomials. In \cite{skewRM}, it is proposed to evaluate \textit{certain} iterated skew polynomials \cite[Sec. 8.8]{cohn} over \textit{certain} affine points by a multivariate Euclidean division algorithm. This lack of \textit{universality} is due to the lack of unique remainders when dividing an iterated skew polynomial by linear skew polynomials $ x_1 - a_1 $, $ x_2 - a_2 $, $ \ldots $, $ x_n - a_n $, which is due to the relations between variables in iterated skew polynomial rings with $ n > 1 $ variables (see \cite[Ex. 3.5]{skewRM} and \cite[Remarks 7 \& 8]{multivariateskew}). Alternatively, a natural and universal definition of evaluation was given in \cite[Def. 9]{multivariateskew} for \textit{free multivariate skew polynomials}, precisely due to the lack of relations among the variables. Due to \cite[Lemma 5]{multivariateskew}, \textit{any} free multivariate skew polynomial can be evaluated over \textit{any} affine point as the \textit{unique} remainder of the division by $ x_1 - a_1 $, $ x_2 - a_2 $, $ \ldots $, $ x_n - a_n $ on the right. Multivariate skew polynomial rings with relations on the variables and where evaluation is still universal can then by defined as quotients of the free ring by two-sided ideals of skew polynomials that vanish at every point \cite[Def. 19]{multivariateskew}. However, these rings are not iterated skew polynomial rings in general.

Thanks to the definitions considered in \cite{multivariateskew}, the central results from \cite{lam, lam-leroy} on the roots of skew polynomials were translated from the univariate to the multivariate case in \cite{multivariateskew}. Among these results, the matroidal or lattice structure of the sets of roots of univariate skew polynomials played a crucial role in the properties of the matrices of Vandermonde type considered in \cite{lam, lam-leroy, linearizedRS} and of the linear codes in \cite{gabidulin, linearizedRS}, especially when considered over finite fields. Apart from its own interest, one of the main motivations behind \cite{skewRM, multivariateskew} is to define linear codes of Reed-Muller type \cite{muller, reed} in the form of evaluation codes that would share the features of the codes in \cite{gabidulin, linearizedRS}, but allowing more general parameters and properties. As a collateral consequence, it was recently shown in \cite{lin-multivariateskew} that Hilbert's Theorem 90 over general Galois extensions of fields (as proven originally by Noether \cite{noether}, see also \cite[Th. 21]{artin-lectures}) can be obtained from such a structure of the roots of multivariate skew polynomials.

To understand the extent of such applications, it is of interest to classify multivariate skew polynomial rings and to find explicit descriptions of the ring morphisms $ \sigma : \mathbb{F} \longrightarrow \mathbb{F}^{n \times n} $ and the $ \sigma $-derivations $ \delta : \mathbb{F} \longrightarrow \mathbb{F}^n $ that define such rings (see \cite[Defs. 1 \& 2]{multivariateskew} or Section \ref{sec preliminaries} below). In the univariate case over finite fields, it is well-known that all endomorphisms are of Frobenius type \cite[Th. 2.21]{lidl} and all derivations are inner derivations \cite[Sec. 8.3]{cohn} (see also \cite[Sec. 4.2]{linearizedRS} or Propositions \ref{prop description morphisms univariate} and \ref{prop description non-vect derivations} below). To the best of our knowledge, explicit descriptions are not yet known when $ n > 1 $. 

The first main objective and results in this paper are showing that, in the multivariate case over finite fields, all morphisms are \textit{diagonalizable} (Theorem \ref{theorem description matrix morph} in Subsection \ref{subsec morphisms}) and all derivations are \textit{inner derivations} (Theorem \ref{theorem description vector der} in Subsection \ref{subsec derivations}), providing explicit descriptions of such objects. As in the univariate case, we believe that such characterizations of morphisms and derivations over finite fields are of interest by themselves. Furthermore, these characterizations do not hold in general for infinite fields (see Examples \ref{ex non inner derivation} and \ref{ex non diagonalizable morphism}). Moreover, the well-known Skolem-Noether theorem \cite{skolem} cannot be directly applied to characterize ring morphisms $ \sigma : \mathbb{F} \longrightarrow \mathbb{F}^{n \times n} $ over finite fields (see Subsection \ref{subsec morphisms}).

The second main objective and results of this paper are showing that the given characterizations imply that any multivariate skew polynomial ring over a finite field is isomorphic as an algebra to a multivariate skew polynomial ring given by a diagonal morphism $ \sigma = {\rm diag}(\sigma_1, \sigma_2, \ldots, \sigma_n) $ and a zero derivation $ \delta = 0 $ (Theorem \ref{th reduction finite fields} in Section \ref{sec classification finite fields}). Such a ring isomorphism is given by affine transformations of variables (meaning a composition of a linear tranformation and a translation), which moreover preserve evaluations and degrees (see Sections \ref{sec linear transformations} and \ref{sec translations} for linear transformations and translations, respectively). In addition, we show that affine transformations of variables are the only left $ \mathbb{F} $-algebra isomorphisms with these properties (Theorem \ref{th affine transformations} Section \ref{sec affine}). When $ \mathbb{F} $ is finite, this implies that two multivariate skew polynomial rings given by diagonal ring morphisms $ \sigma = {\rm diag}(\sigma_1, \sigma_2, \ldots, \sigma_n) $ and $ \tau = {\rm diag}(\tau_1, \tau_2, \ldots, \tau_n) $ are isomorphic by such an algebra isomorphism if, and only if, $ (\sigma_1, \sigma_2, \ldots, \sigma_n) $ is a permutation of $ (\tau_1, \tau_2, \ldots, \tau_n) $ (Theorem \ref{th classification} in Section \ref{sec classification finite fields}). We believe that the study of affine transformations of variables is of interest by itself, and thus we consider general division rings $ \mathbb{F} $ for such results. 

In conclusion, these results provide a full and explicit classification of free and non-free multivariate skew polynomial rings over finite fields, taking evaluations and degrees into account (Section \ref{sec classification finite fields}). Furthermore, the proofs give explicit and computational methods to find the reduced form of a multivariate skew polynomial ring.

\section*{Notation}

Unless otherwise stated, $ \mathbb{F} $ will denote a division ring. A field is a commutative division ring. All rings in this work will be assumed to have (multiplicative) identity and all ring morphisms are required to map identities to identities. Throughout this work, $ q $ is a fixed power of a prime number $ p $, and $ \mathbb{F}_q $ denotes the finite field with $ q $ elements. For positive integers $ m $ and $ n $, $ \mathbb{F}^{m \times n} $ will denote the set of $ m \times n $ matrices over $ \mathbb{F} $, whose identity (if $ m = n $) is denoted by $ I $, and $ \mathbb{F}^n $ will denote the set of column vectors of length $ n $ over $ \mathbb{F} $. That is, $ \mathbb{F}^n = \mathbb{F}^{n \times 1} $.

\section{Preliminaries: Multivariate skew polynomial rings} \label{sec preliminaries}

We will consider multivariate skew polynomial rings as in \cite{multivariateskew} for the reasons stated in the Introduction. Let $ x_1, x_2, \ldots, x_n $ be pair-wise distinct letters, which we will call \textit{variables}, and let $ \mathcal{M} $ be the free monoid with basis $ x_1, x_2, \ldots, x_n $ \cite[Sec. 6.5]{cohn}, whose elements are called \textit{monomials} and whose identity is denoted by $ 1 $. We will denote $ \mathbf{x} = (x_1, x_2, \ldots, x_n)^T \in \mathcal{M}^n $ and we will typically denote monomials in such variables by $ \mathfrak{m}(\mathbf{x}) $, or $ \mathfrak{m} $ for brevity if there is no confusion about $ \mathbf{x} $. We define the \textit{degree} of a given monomial $ \mathfrak{m}(\mathbf{x}) \in \mathcal{M} $, denoted by $ \deg(\mathfrak{m}(\mathbf{x})) $, as its length as a string in $ x_1, x_2, \ldots, x_n $, where $ \deg(1) = 0 $.

Let $ \mathcal{R} $ be the left vector space over $ \mathbb{F} $ with basis $ \mathcal{M} $. We call its elements \textit{free multivariate skew polynomials}, or simply skew polynomials. They are of the form
$$ F(\mathbf{x}) = \sum_{\mathfrak{m}(\mathbf{x}) \in \mathcal{M}} F_\mathfrak{m} \mathfrak{m}(\mathbf{x}), $$
where $ F_\mathfrak{m} \in \mathbb{F} $ are all zero except for a finite number of them. We define the \textit{degree} of $ F(\mathbf{x}) $, denoted by $ \deg(F(\mathbf{x})) $, as the maximum degree of a monomial $ \deg(\mathfrak{m}(\mathbf{x})) $ such that $ F_\mathfrak{m} \neq 0 $, in the case $ F \neq 0 $, and we define $ \deg(F(\mathbf{x})) = \infty $ if $ F(\mathbf{x}) = 0 $.

Following Ore's line of thought \cite[pp. 481--482]{ore}, it was shown in \cite[Th. 1]{multivariateskew} that a product in $ \mathcal{R} $ turns it into a ring with multiplicative identity $ 1 $, where products of monomials consist in appending them and where $ \deg(F(\mathbf{x})G(\mathbf{x})) = \deg(F(\mathbf{x})) + \deg(G(\mathbf{x})) $, for all $ F(\mathbf{x}),G(\mathbf{x}) \in \mathcal{R} $, if, and only if, there exist maps $ \sigma_{i,j}, \delta_i : \mathbb{F} \longrightarrow \mathbb{F} $, for $ i,j = 1,2, \ldots, n $, such that
\begin{equation}
x_i a = \sum_{j=1}^n \sigma_{i,j}(a) x_j + \delta_i(a),
\label{eq def inner product}
\end{equation}
for $ i = 1,2, \ldots, n $, and for all $ a \in \mathbb{F} $, where the map
\begin{equation*}
\sigma : \mathbb{F} \longrightarrow \mathbb{F}^{n \times n} : a \mapsto \left( \begin{array}{cccc}
\sigma_{1,1}(a) & \sigma_{1,2}(a) & \ldots & \sigma_{1,n}(a) \\
\sigma_{2,1}(a) & \sigma_{2,2}(a) & \ldots & \sigma_{2,n}(a) \\
\vdots & \vdots & \ddots & \vdots \\
\sigma_{n,1}(a) & \sigma_{n,2}(a) & \ldots & \sigma_{n,n}(a) \\
\end{array} \right)
\end{equation*}
is a \textit{ring morphism} and the map
\begin{equation*}
\delta : \mathbb{F} \longrightarrow \mathbb{F}^n : a \mapsto \left( \begin{array}{c}
\delta_1(a) \\
\delta_2(a) \\
\vdots \\
\delta_n(a)
\end{array} \right)
\end{equation*}
is a \textit{$ \sigma $-derivation}. Recall from \cite[Def. 1]{multivariateskew} that $ \delta $ is a $ \sigma $-derivation if it is additive and 
\begin{equation}
\delta(ab) = \sigma(a) \delta(b) + \delta(a) b,
\label{eq derivations multiplicative property}
\end{equation}
for all $ a,b \in \mathbb{F} $. With this compact notation, we may rewrite Equation (\ref{eq def inner product}) as
$$ \mathbf{x} a = \sigma(a) \mathbf{x} + \delta(a). $$

As in the univariate case, it was shown in \cite[Th. 1]{multivariateskew} that such pairs $ (\sigma, \delta) $ correspond bijectively with products in $ \mathcal{R} $ satisfying the properties described above. Hence we may use the notation $ \mathcal{R} = \mathbb{F}[\mathbf{x}; \sigma, \delta] $ when we consider the product in $ \mathcal{R} $ given by the pair $ (\sigma, \delta) $. In other words, $ \mathbb{F}[\mathbf{x}; \sigma, \delta] = \mathbb{F}[\mathbf{x}; \sigma^\prime, \delta^\prime] $ if, and only if, $ (\sigma, \delta) = (\sigma^\prime, \delta^\prime) $. However, it may hold that $ (\sigma, \delta) \neq (\sigma^\prime, \delta^\prime) $ and at the same time $ \mathbb{F}[\mathbf{x}; \sigma, \delta] $ and $ \mathbb{F}[\mathbf{x}; \sigma^\prime, \delta^\prime] $ are isomorphic as left $ \mathbb{F} $-algebras in a canonical way. Observe that the conventional free multivariate polynomial ring $ \mathbb{F}[\mathbf{x}] $ corresponds to $ \sigma = {\rm Id} $ and $ \delta = 0 $, where $ {\rm Id}(a) = aI $, for all $ a \in \mathbb{F} $.

Thanks to the lack of relations among the variables, it was proven in \cite[Lemma 5]{multivariateskew} that, for any $ a_1, a_2, \ldots , a_n \in \mathbb{F} $ and any $ F(\mathbf{x}) \in \mathbb{F}[\mathbf{x}; \sigma, \delta] $, there exist unique $ G_1(\mathbf{x}), G_2(\mathbf{x}), $ $ \ldots, $ $ G_n(\mathbf{x}) \in \mathbb{F}[\mathbf{x}; \sigma, \delta] $ and $ b \in \mathbb{F} $ such that
$$ F(\mathbf{x}) = \sum_{i = 1}^n G_i(\mathbf{x}) (x_i - a_i) + b. $$
Hence, we may define the $ (\sigma,\delta) $-evaluation of $ F(\mathbf{x}) $ at $ \mathbf{a} \in \mathbb{F}^n $ as the constant $ b \in \mathbb{F} $, which we will denote by
\begin{equation}
F(\mathbf{a}) = E^{\sigma,\delta}_\mathbf{a}(F(\mathbf{x})) \in \mathbb{F}.
\label{eq evaluation}
\end{equation}
The notation $ F(\mathbf{a}) $ is for ease of reading and brevity, whereas $ E^{\sigma,\delta}_\mathbf{a}(F(\mathbf{x})) $ will be necessary when we consider different pairs $ (\sigma, \delta) $.

Once the natural and universal evaluation in (\ref{eq evaluation}) is obtained over free multivariate skew polynomial rings, we may extend it to multivariate skew polynomial rings with relations on the variables as follows. Let $ I(\mathbb{F}^n) \subseteq \mathbb{F}[\mathbf{x}; \sigma, \delta] $ denote the left ideal of skew polynomials that vanish at every point. It was shown in \cite[Prop. 18]{multivariateskew} that $ I(\mathbb{F}^n) $ is a two-sided ideal. Therefore, we may define \textit{non-free multivariate skew polynomial rings}, where evaluation is still natural and universal as in (\ref{eq evaluation}), as quotient rings of the form $ \mathbb{F}[\mathbf{x}; \sigma, \delta] / I $, where $ I \subseteq I(\mathbb{F}^n) $ is a two-sided ideal. The skew polynomial ring $ \mathbb{F}[\mathbf{x}; \sigma, \delta] / I(\mathbb{F}^n) $ can be called a \textit{minimal (multivariate) skew polynomial ring}. In the conventional case $ (\sigma, \delta) = ({\rm Id}, 0) $ over a finite field $ \mathbb{F} = \mathbb{F}_q $, we recover the minimal conventional polynomial ring due to the well-known fact that
$$ I(\mathbb{F}_q^n) = \left( \{ x_i x_j - x_j x_i \mid 1 \leq i < j \leq n \} \cup \{ x_1^q - x_1, x_2^q - x_2, \ldots, x_n^q - x_n \} \right) . $$
Formally, our first main results will be showing that any pair $ (\sigma, \delta) $ is such that $ \sigma = A {\rm diag}(\sigma_1, \sigma_2, \ldots, \sigma_n) A^{-1} $ (Theorem \ref{theorem description matrix morph}) and $ \delta = A (\boldsymbol\lambda {\rm Id} - {\rm diag}(\sigma_1, \sigma_2, \ldots, \sigma_n) \boldsymbol\lambda) $ (Theorem \ref{theorem description vector der}), over finite fields $ \mathbb{F} = \mathbb{F}_q $, for an invertible matrix $ A \in \mathbb{F}_q^{n \times n} $ and a vector $ \boldsymbol\lambda \in \mathbb{F}_q^n $. Our second main results will be proving that in such cases, the free or non-free multivariate skew polynomial ring $ \mathbb{F}_q[\mathbf{x}; \sigma, \delta] $ is canonically isomorphic to the ring $ \mathbb{F}_q[\mathbf{x}; {\rm diag}(\sigma_1, \sigma_2, \ldots, \sigma_n), 0] $ (Theorem \ref{th reduction finite fields}). Note that in the latter skew polynomial ring, the commutation rule (\ref{eq def inner product}) simply reads
\begin{equation}
x_i a = \sigma_i(a) x_i,
\label{eq def inner product simplified}
\end{equation}
for all $ i = 1,2, \ldots, n $ and all $ a \in \mathbb{F}_q $. In addition, we show in Theorem \ref{th classification} that two such skew polynomial rings $ \mathbb{F}_q[\mathbf{x}; {\rm diag}(\sigma_1, \sigma_2, \ldots, \sigma_n), 0] $ and $ \mathbb{F}_q[\mathbf{x}; {\rm diag}(\tau_1, \tau_2, \ldots, \tau_n), 0] $ are isomorphic as $ \mathbb{F}_q $-algebras by an isomorphism that preserves degrees if, and only if, $ (\sigma_1, \sigma_2, \ldots, \sigma_n) $ is a permutation of $ (\tau_1, \tau_2, \ldots, \tau_n) $. This gives a full classification of free, non-free and minimal multivariate skew polynomial rings over finite fields.

\section{Explicit descriptions for finite fields} \label{sec explicit finite fields}

In the univariate case, $ n = 1 $, explicit descriptions of all possible ring morphisms $ \sigma : \mathbb{F}_q \longrightarrow \mathbb{F}_q $ and $ \sigma $-derivations $ \delta : \mathbb{F}_q \longrightarrow \mathbb{F}_q $ are well-known for any value of $ q $.

First, all ring morphisms $ \sigma : \mathbb{F}_q \longrightarrow \mathbb{F}_q $ form the Galois group of the field extension $ \mathbb{F}_p \subseteq \mathbb{F}_q $. More concretely, they are given as follows (see \cite[Th. 2.21]{lidl}).

\begin{proposition} \label{prop description morphisms univariate}
A map $ \sigma : \mathbb{F}_q \longrightarrow \mathbb{F}_q $ is a ring morphism if, and only if, there exists an integer $ 0 \leq j \leq m-1 $ such that $ \sigma(a) = a^{p^j} $, for all $ a \in \mathbb{F}_q $, where $ q = p^m $. In particular, $ \sigma $ is a field automorphism.
\end{proposition}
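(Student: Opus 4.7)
The plan is to prove both directions of the equivalence. The easier backward implication is the ``freshman's dream'': in characteristic $p$, $(a+b)^p = a^p + b^p$, and iterating $j$ times shows that $\sigma(a) = a^{p^j}$ is additive; multiplicativity and $\sigma(1)=1$ are immediate. Hence each such power of Frobenius is a ring morphism.

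For the forward direction, I would first show that every ring morphism $\sigma : \mathbb{F}_q \longrightarrow \mathbb{F}_q$ is automatically a field automorphism. The kernel $\ker(\sigma)$ is an ideal of the field $\mathbb{F}_q$, so it is either $\{0\}$ or all of $\mathbb{F}_q$; the latter is ruled out by our convention $\sigma(1)=1$. Hence $\sigma$ is injective, and injectivity on a finite set forces surjectivity, so $\sigma$ is a field automorphism. Moreover, by additivity and $\sigma(1)=1$, the map $\sigma$ fixes the prime subfield $\mathbb{F}_p \subseteq \mathbb{F}_q$ pointwise, so $\sigma \in {\rm Gal}(\mathbb{F}_q/\mathbb{F}_p)$.

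It then remains to identify this Galois group. Writing $\phi(a) = a^p$ for the Frobenius map, I would observe that every element of $\mathbb{F}_q$ satisfies $a^{p^m} = a$ (since $\mathbb{F}_q^*$ has order $p^m - 1$), so $\phi^m = {\rm Id}$; on the other hand, for $0 < j < m$ the polynomial $x^{p^j} - x$ has at most $p^j < q$ roots in $\mathbb{F}_q$, so $\phi^j \neq {\rm Id}$. Therefore $\phi$ has order exactly $m$ in $\mathrm{Aut}(\mathbb{F}_q)$. Since $\lvert \mathrm{Aut}(\mathbb{F}_q) \rvert \leq [\mathbb{F}_q : \mathbb{F}_p] = m$ (any $\mathbb{F}_p$-algebra automorphism is determined by its action on a primitive element, which must be sent to a conjugate root of its minimal polynomial of degree dividing $m$), the cyclic subgroup $\langle \phi \rangle$ exhausts $\mathrm{Aut}(\mathbb{F}_q)$. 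Hence $\sigma = \phi^j$ for a unique $0 \leq j \leq m-1$, i.e.\ $\sigma(a) = a^{p^j}$ for all $a \in \mathbb{F}_q$.

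There is no serious obstacle here, as this is a classical result cited from \cite{lidl}; the only step requiring a small argument beyond generalities is verifying that the Frobenius has order exactly $m$ and generates the whole automorphism group, which is handled by the polynomial-counting argument above.
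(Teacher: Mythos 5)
Your proof is correct. The paper does not prove this proposition at all — it simply cites it as \cite[Th.\ 2.21]{lidl} — so there is nothing to compare against; your argument (injectivity from the kernel being an ideal, surjectivity by finiteness, fixing of $\mathbb{F}_p$, and the count $\lvert \mathrm{Aut}(\mathbb{F}_q)\rvert \le m$ matched against the order-$m$ Frobenius) is the standard and complete one. One negligible imprecision: a generator of $\mathbb{F}_q$ over $\mathbb{F}_p$ has minimal polynomial of degree exactly $m$, not merely dividing $m$, which is what makes the bound $\le m$ tight; but the inequality you need holds either way.
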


Second, all $ \sigma $-derivations over finite fields are \textit{inner derivations}. More concretely, they are given as follows (see \cite[Sec. 8.3]{cohn} or \cite[Sec. 4.2]{linearizedRS}).

\begin{proposition} \label{prop description non-vect derivations}
Given a field automorphism $ \sigma : \mathbb{F}_q \longrightarrow \mathbb{F}_q $, a map $ \delta : \mathbb{F}_q \longrightarrow \mathbb{F}_q $ is a $ \sigma $-derivation if, and only if, there exists an element $ \lambda \in \mathbb{F}_q $ such that 
$$ \delta(a) = \lambda (a - \sigma(a)), $$
for all $ a \in \mathbb{F}_q $. In particular, $ \delta = 0 $ is the only $ {\rm Id} $-derivation.
\end{proposition}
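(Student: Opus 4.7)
The plan is to exploit the fact that $\mathbb{F}_q^*$ is cyclic, so a $\sigma$-derivation on $\mathbb{F}_q$ is pinned down by its value on a single primitive element $\alpha$. For the ``if'' direction, given any $\lambda \in \mathbb{F}_q$, I would directly verify that $\delta(a) := \lambda(a - \sigma(a))$ is additive (immediate from additivity of $\sigma$) and satisfies the $\sigma$-derivation identity (\ref{eq derivations multiplicative property}). The latter reduces to the short calculation $\sigma(a)\delta(b) + \delta(a)b = \lambda \sigma(a) b - \lambda \sigma(a)\sigma(b) + \lambda a b - \lambda \sigma(a) b = \lambda(ab - \sigma(ab))$, where commutativity of $\mathbb{F}_q$ is essential for moving $\lambda$ and $\sigma(a)$ past each other.

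For the ``only if'' direction, let $\delta$ be a $\sigma$-derivation and fix a primitive element $\alpha$ of $\mathbb{F}_q^*$, so that every nonzero element of $\mathbb{F}_q$ is a power of $\alpha$. Suppose first that $\sigma \neq {\rm Id}$. Since $\alpha$ generates $\mathbb{F}_q$ over the prime subfield $\mathbb{F}_p$ (which is fixed pointwise by $\sigma$), we must have $\sigma(\alpha) \neq \alpha$. I would then set $\lambda := \delta(\alpha)/(\alpha - \sigma(\alpha))$ and prove by induction on $k \geq 1$ that $\delta(\alpha^k) = \lambda(\alpha^k - \sigma(\alpha^k))$. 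The induction step expands $\delta(\alpha^{k+1}) = \sigma(\alpha)\delta(\alpha^k) + \delta(\alpha)\alpha^k$ via (\ref{eq derivations multiplicative property}) and uses the telescoping identity $\sigma(\alpha)(\alpha^k - \sigma(\alpha^k)) + (\alpha - \sigma(\alpha))\alpha^k = \alpha^{k+1} - \sigma(\alpha^{k+1})$. Combined with $\delta(0) = 0$ (which follows from additivity), this yields the claimed formula on all of $\mathbb{F}_q$.

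In the remaining case $\sigma = {\rm Id}$, the right-hand side $\lambda(a - \sigma(a))$ is identically zero, so I would need to show $\delta \equiv 0$ to conclude. Here $\delta$ is an ordinary derivation, and a routine induction on $k$ gives $\delta(\alpha^k) = k \alpha^{k-1} \delta(\alpha)$; applying this with $k = q$ and using $\alpha^q = \alpha$ together with $q \equiv 0$ in $\mathbb{F}_q$ forces $\delta(\alpha) = 0$, and hence $\delta \equiv 0$ on $\mathbb{F}_q^*$ and therefore on all of $\mathbb{F}_q$. I expect the only real obstacle to be isolating the telescoping identity cleanly in the first case; once that is in hand, the rest of the argument is essentially bookkeeping with powers of $\alpha$ and $\sigma(\alpha)$.
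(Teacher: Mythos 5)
Your proof is correct. The paper does not prove Proposition \ref{prop description non-vect derivations} directly (it cites the literature), but it does prove the matrix generalization, Theorem \ref{theorem description vector der}, and the comparison is instructive. Both arguments hinge on cyclicity of $\mathbb{F}_q^*$ and on a closed form for $\delta(\alpha^{k})$ in terms of $\delta(\alpha)$; your telescoping identity is exactly the geometric-sum form of Lemma \ref{lemma proof vector description 1} specialized to $n=1$, $\tau={\rm Id}$. Where you diverge is in producing $\lambda$: you split into the cases $\sigma\neq{\rm Id}$ (where $\alpha-\sigma(\alpha)\neq 0$, so you may simply divide and set $\lambda=\delta(\alpha)/(\alpha-\sigma(\alpha))$) and $\sigma={\rm Id}$ (where $\delta(\alpha^q)=q\alpha^{q-1}\delta(\alpha)=0$ forces $\delta\equiv 0$). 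The paper instead exploits $\delta(c)=\delta(c^q)$ together with the binomial identity $\binom{q-1}{i}=(-1)^i$ in $\mathbb{F}_q$ to obtain $\delta(c)=(\tau(c)-\sigma(c))^{q-1}\delta(c)$ and then defines $\boldsymbol\lambda=(\tau(c)-\sigma(c))^{q-2}\delta(c)$, which requires no invertibility of $\tau(c)-\sigma(c)$ and hence no case split; this uniformity is what makes the argument survive the passage to $n>1$, where that matrix can be singular without being zero. Your route is more elementary and perfectly adequate for the univariate statement, but it does not generalize as stated, since ``divide by $\alpha-\sigma(\alpha)$'' has no matrix analogue.
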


In the next two subsections, we will extend Propositions \ref{prop description morphisms univariate} and \ref{prop description non-vect derivations} to the case $ n > 1 $.

\subsection{All derivations are inner derivations} \label{subsec derivations}

For technical reasons, we will first give descriptions of derivations, since they will prove to be useful to describe morphisms in the next subsection. In this subsection, we will show in Proposition \ref{prop description non-vect derivations} that all derivations are also inner derivations when $ n \geq 1 $. Furthermore, we will consider a more general definition of derivations and inner derivations that will also be useful in the next subsection.

\begin{definition} \label{def vector der}
Given a field $ \mathbb{F} $ and ring morphisms $ \sigma, \tau : \mathbb{F} \longrightarrow \mathbb{F}^{n \times n} $, we say that a map $ \delta : \mathbb{F} \longrightarrow \mathbb{F}^n $ is a $ (\sigma, \tau) $-derivation if it is additive and satisfies that
\begin{equation}
\delta(ab) = \sigma(a) \delta(b) + \tau(b) \delta(a) ,
\label{eq multiplicative property vector derivations}
\end{equation}
for all $ a,b \in \mathbb{F} $. Moreover, we say that $ \delta $ is an inner $ (\sigma, \tau) $-derivation if there exists $ \boldsymbol\lambda \in \mathbb{F}^n $ such that
\begin{equation}
\delta(a) = (\tau(a) - \sigma(a)) \boldsymbol\lambda ,
\label{eq def inner derivation}
\end{equation}
for all $ a \in \mathbb{F} $.
\end{definition}

Observe that a $ \sigma $-derivation is just a $ (\sigma, {\rm Id}) $-derivation since the definition is given over a (commutative) field $ \mathbb{F} $ and hence (\ref{eq derivations multiplicative property}) holds. This is however not true over general division rings. 

In addition, if $ \mathbb{F} $ is a field and $ \sigma(a)\tau(b) = \tau(b)\sigma(a) $, for all $ a,b \in \mathbb{F} $ (which is always the case if $ \tau = {\rm Id} $ and $ \mathbb{F} $ is a field), then an inner $ (\sigma,\tau) $-derivation is indeed a $ (\sigma,\tau) $-derivation. Again, the concept of inner $ (\sigma,{\rm Id}) $-derivation recovers the concept of inner $ \sigma $-derivation if $ \mathbb{F} $ is a field, but this is not the case if $ \mathbb{F} $ is not commutative (see Definition \ref{def inner derivations} in Subsection \ref{subsec definition of phi} for the definition of inner derivations over general division rings).

The main result of this subsection is the following.

\begin{theorem} \label{theorem description vector der}
Let $ \sigma, \tau : \mathbb{F}_q \longrightarrow \mathbb{F}_q^{n \times n} $ be ring morphisms. The following hold:
\begin{enumerate}
\item
$ \sigma(a)\tau(b) = \tau(b) \sigma(a) $ for all $ a, b \in \mathbb{F}_q $ if, and only if, $ \sigma(c)\tau(c) = \tau(c) \sigma(c) $ for a primitive element $ c \in \mathbb{F}_q^* $.
\item
If Item 1 holds, then a map $ \delta : \mathbb{F}_q \longrightarrow \mathbb{F}_q^n $ is a $ (\sigma, \tau) $-derivation if, and only if, it is an inner $ (\sigma,\tau) $-derivation.
\end{enumerate}
In particular, for any ring morphism $ \sigma : \mathbb{F}_q \longrightarrow \mathbb{F}_q^{n \times n} $, the only $ (\sigma, \sigma) $-derivation is the zero derivation $ \delta = 0 $.
\end{theorem}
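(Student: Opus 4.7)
For Item 1, the forward implication is immediate. For the backward one, every nonzero element of $\mathbb{F}_q$ is a power of the primitive element $c$, and ring morphisms respect powers; so if $\sigma(c)$ and $\tau(c)$ commute, then $\sigma(c^i)\tau(c^j) = \sigma(c)^i\tau(c)^j = \tau(c)^j\sigma(c)^i = \tau(c^j)\sigma(c^i)$, and cases involving $0$ are trivial since $\sigma(0) = \tau(0) = 0$. For the backward implication of Item 2, assuming $\delta(a) = (\tau(a)-\sigma(a))\boldsymbol\lambda$, a direct expansion using both the commutation $\sigma(a)\tau(b) = \tau(b)\sigma(a)$ from Item 1 and the commutativity of $\mathbb{F}_q$ gives $\sigma(a)\delta(b) + \tau(b)\delta(a) = (\tau(a)\tau(b) - \sigma(a)\sigma(b))\boldsymbol\lambda = (\tau(ab) - \sigma(ab))\boldsymbol\lambda = \delta(ab)$, while additivity is clear.

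For the forward implication of Item 2, the plan is to reduce everything to a single linear-algebraic condition on $\delta(c)$. Write $S = \sigma(c)$, $T = \tau(c)$ and $\mathbf{v} = \delta(c)$. First, additivity forces $\delta(0) = 0$, while the identity $\delta(1) = \sigma(1)\delta(1) + \tau(1)\delta(1) = 2\delta(1)$ gives $\delta(1) = 0$. A straightforward induction on $k \geq 1$ using (\ref{eq multiplicative property vector derivations}) yields
\begin{equation*}
\delta(c^k) = \Bigl(\sum_{i=0}^{k-1} S^{k-1-i}T^i\Bigr)\mathbf{v} .
\end{equation*}
Since $\mathbb{F}_q = \{0\} \cup \{c^k : 0 \leq k \leq q-2\}$, any two $(\sigma,\tau)$-derivations agreeing at $c$ must coincide. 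The relation $c^{q-1} = 1$ yields the single constraint $N\mathbf{v} = 0$, where $N := \sum_{i=0}^{q-2} S^{q-2-i}T^i$. If I can show that $\mathbf{v} \in \mathrm{Im}(T-S)$, then choosing $\boldsymbol\lambda \in \mathbb{F}_q^n$ with $(T-S)\boldsymbol\lambda = \mathbf{v}$ and letting $\delta_{\boldsymbol\lambda}(a) := (\tau(a)-\sigma(a))\boldsymbol\lambda$, the backward implication makes $\delta_{\boldsymbol\lambda}$ a $(\sigma,\tau)$-derivation with $\delta_{\boldsymbol\lambda}(c) = \mathbf{v} = \delta(c)$, forcing $\delta = \delta_{\boldsymbol\lambda}$, which is inner.

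The decisive step, and the main obstacle, is establishing $\mathbf{v} \in \mathrm{Im}(T-S)$. I would exploit that $\sigma(a)^q = \sigma(a^q) = \sigma(a)$ for every $a \in \mathbb{F}_q$, so each matrix in the image of $\sigma$ (and of $\tau$) annihilates $X^q - X = \prod_{\alpha \in \mathbb{F}_q}(X-\alpha)$, a polynomial with simple roots in $\mathbb{F}_q$. Therefore $S$ and $T$ are diagonalizable over $\mathbb{F}_q$ and, since they commute, simultaneously diagonalizable over $\mathbb{F}_q$. In an appropriate basis $S = \mathrm{diag}(s_1,\ldots,s_n)$ and $T = \mathrm{diag}(t_1,\ldots,t_n)$ with $s_k, t_k \in \mathbb{F}_q^*$, and $N\mathbf{v} = 0$ decouples into the $n$ scalar equations $\bigl(\sum_{i=0}^{q-2} s_k^{q-2-i}t_k^i\bigr)v_k = 0$. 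When $t_k \neq s_k$ the coefficient equals $(t_k^{q-1} - s_k^{q-1})/(t_k - s_k) = 0$ and imposes no condition on $v_k$; when $t_k = s_k$ it equals $(q-1)s_k^{q-2} = -s_k^{q-2} \neq 0$, forcing $v_k = 0$. In either case $v_k$ lies in the image of the $k$-th diagonal entry of $T-S$, so $\mathbf{v} \in \mathrm{Im}(T-S)$ as required. The final assertion for $(\sigma,\sigma)$-derivations falls out at once: there $T = S$, so every diagonal entry of $N$ equals $-s_k^{q-2} \neq 0$, making $N$ invertible, so $N\mathbf{v} = 0$ gives $\mathbf{v} = 0$ and hence $\delta \equiv 0$.
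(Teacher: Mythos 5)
Your proof is correct, and for the decisive step it takes a genuinely different route from the paper. Both arguments share the same skeleton: the recursion $\delta(c^{k}) = \bigl(\sum_{i} S^{k-1-i}T^{i}\bigr)\delta(c)$ for a primitive element $c$, the observation that a $(\sigma,\tau)$-derivation is determined by its value at $c$, and the reduction of the whole problem to showing $\delta(c) \in \mathrm{Im}(T-S)$. Where you diverge is in how that membership is established. The paper applies the recursion to $c = c^{q}$ and uses the identity $\binom{q-1}{i} = (-1)^{i}$ in $\mathbb{F}_q$ to recognize the resulting operator as $(T-S)^{q-1}$, which yields the closed-form witness $\boldsymbol\lambda = (T-S)^{q-2}\delta(c)$ directly. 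You instead apply the recursion to $c^{q-1} = 1$ to get the constraint $N\delta(c) = 0$, observe that $S^{q} = S$ and $T^{q} = T$ force both matrices to be diagonalizable over $\mathbb{F}_q$ (hence, since they commute, simultaneously diagonalizable), and then check the membership eigenvalue by eigenvalue via a geometric-sum computation. Your spectral argument is arguably more transparent about \emph{why} the statement is true (the coefficient $(t_k^{q-1}-s_k^{q-1})/(t_k-s_k)$ vanishes exactly when it needs to, and is a unit exactly when it must force $v_k = 0$), and it cleanly delivers the final assertion about $(\sigma,\sigma)$-derivations since $N$ becomes invertible. The paper's argument buys an explicit, basis-free formula for $\boldsymbol\lambda$, which the author exploits later for the claim that the reduced form of a skew polynomial ring can be computed explicitly; your $\boldsymbol\lambda$ is only produced after a change of basis. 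Note also that your diagonalizability observation is self-contained (it rests only on $X^q - X$ being squarefree, not on the paper's Theorem~\ref{theorem description matrix morph}), so there is no circularity with the later result on morphisms.
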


The first item in Theorem \ref{theorem description vector der} is straightforward to prove. We devote the rest of the section to proving the second item. 

The following auxiliary lemma is obtained by applying (\ref{eq multiplicative property vector derivations}) recursively on powers.

\begin{lemma} \label{lemma proof vector description 1}
Given a field $ \mathbb{F} $, ring morphisms $ \sigma, \tau : \mathbb{F} \longrightarrow \mathbb{F}^{n \times n} $ such that $ \sigma(a)\tau(a) = \tau(a) \sigma(a) $ for all $ a \in \mathbb{F} $, and a $ (\sigma, \tau) $-derivation $ \delta : \mathbb{F} \longrightarrow \mathbb{F}^n $, it holds that
\begin{equation}
\delta \left( a^{j+1} \right) = \left( \sum_{i=0}^j \sigma(a)^i \tau(a)^{j-i} \right) \delta(a),
\end{equation}
for all $ a \in \mathbb{F} $ and all integers $ j \geq 0 $.
\end{lemma}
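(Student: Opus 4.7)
The plan is to proceed by induction on $ j \geq 0 $. The base case $ j = 0 $ reduces to $ \delta(a) = \sigma(a)^0 \tau(a)^0 \delta(a) = I \cdot \delta(a) $, which is tautologically true, so there is nothing to check.

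For the inductive step, I would assume the identity for some fixed $ j \geq 0 $ and compute $ \delta(a^{j+2}) $ by writing $ a^{j+2} = a \cdot a^{j+1} $ and applying the defining property (\ref{eq multiplicative property vector derivations}) of a $ (\sigma,\tau) $-derivation:
\begin{equation*}
\delta(a^{j+2}) \; = \; \sigma(a) \, \delta(a^{j+1}) \; + \; \tau(a^{j+1}) \, \delta(a).
\end{equation*}
Since $ \tau $ is a ring morphism, $ \tau(a^{j+1}) = \tau(a)^{j+1} $. Substituting the inductive hypothesis into the first term and distributing $ \sigma(a) $ through the finite sum yields
\begin{equation*}
\delta(a^{j+2}) \; = \; \left( \sum_{i=0}^j \sigma(a)^{i+1} \tau(a)^{j-i} \right) \delta(a) \; + \; \tau(a)^{j+1} \delta(a).
\end{equation*}

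The remaining step is a reindexing: after the substitution $ i \mapsto i-1 $ in the first sum, the range becomes $ 1 \leq i \leq j+1 $ with summand $ \sigma(a)^i \tau(a)^{j+1-i} $, and the isolated term $ \tau(a)^{j+1} \delta(a) $ is exactly the missing $ i = 0 $ contribution. Merging them gives
\begin{equation*}
\delta(a^{j+2}) \; = \; \left( \sum_{i=0}^{j+1} \sigma(a)^i \tau(a)^{j+1-i} \right) \delta(a),
\end{equation*}
which completes the induction.

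There is no real obstacle; the only subtlety worth flagging is that this particular decomposition (expanding $ a \cdot a^{j+1} $ rather than $ a^{j+1} \cdot a $) avoids needing to push a factor of $ \tau(a) $ past powers of $ \sigma(a) $ inside the sum, so the commutativity hypothesis $ \sigma(a)\tau(a) = \tau(a)\sigma(a) $ is not actually invoked in this lemma, even though it is assumed in the statement for consistency with the subsequent use of the lemma in proving Theorem \ref{theorem description vector der}.
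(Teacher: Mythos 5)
Your proof is correct and is precisely the recursion the paper has in mind (the paper gives no details beyond saying the lemma is obtained by applying the $(\sigma,\tau)$-derivation identity recursively on powers). Your closing observation is also accurate: with the decomposition $a^{j+2} = a \cdot a^{j+1}$ the factor $\sigma(a)$ always lands on the left of the inductive sum and the new term $\tau(a^{j+1}) = \tau(a)^{j+1}$ is already the missing $i=0$ summand, so the commutativity hypothesis $\sigma(a)\tau(a) = \tau(a)\sigma(a)$ is never invoked in this lemma (it is needed only later, e.g.\ to reorder the factors as in the final step of the proof of Theorem \ref{theorem description vector der}).
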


Our second auxiliary lemma is the following fact about binomial coefficients over $ \mathbb{F}_q $. Since we were not able to find an explicit proof in the literature, we provide one for convenience.

\begin{lemma} \label{lemma proof vector description 2}
For every integer $ 0 \leq i \leq q-1 $, it holds that
$$ \binom{q-1}{i} = (-1)^i $$
inside the finite field $ \mathbb{F}_q $.
\end{lemma}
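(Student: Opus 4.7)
The plan is to derive the identity via a slick polynomial identity in $\mathbb{F}_q[x]$ rather than by manipulating binomial coefficients directly. The starting observation is that the Frobenius map gives
\[
(1+x)^q = 1 + x^q \quad \text{in } \mathbb{F}_q[x],
\]
so the polynomial $(1+x)^{q-1}$ is completely pinned down as the quotient of $1 + x^q$ by $1+x$ in the integral domain $\mathbb{F}_q[x]$.

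Next, I would guess and verify the closed form $(1+x)^{q-1} = \sum_{i=0}^{q-1} (-1)^i x^i$ by multiplying out: the product $(1+x)\sum_{i=0}^{q-1}(-1)^i x^i$ telescopes to $1 - (-x)^q$, which equals $1 + x^q$ in $\mathbb{F}_q$ in both the $p$ odd case (where $(-1)^q = -1$) and the $p=2$ case (where $-1 = 1$, so the sign is irrelevant). Since $\mathbb{F}_q[x]$ has no zero divisors, the cancellation of $1+x$ is legitimate and yields
\[
(1+x)^{q-1} = \sum_{i=0}^{q-1} (-1)^i x^i \quad \text{in } \mathbb{F}_q[x].
\]

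Finally, I would compare this with the usual binomial expansion $(1+x)^{q-1} = \sum_{i=0}^{q-1} \binom{q-1}{i} x^i$, interpreted in $\mathbb{F}_q[x]$ by reducing integer binomial coefficients modulo $p$. Equating coefficients of $x^i$ for each $0 \le i \le q-1$ then gives $\binom{q-1}{i} = (-1)^i$ in $\mathbb{F}_q$, as claimed.

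The only potential obstacle is the characteristic-two edge case in the telescoping step, but it resolves itself trivially since $-1$ and $+1$ coincide there; conceptually the argument is uniform in $p$. No induction or Lucas-type machinery is required, and the proof stays entirely inside elementary manipulations in the polynomial ring $\mathbb{F}_q[x]$.
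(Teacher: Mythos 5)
Your proof is correct and follows essentially the same route as the paper: both rest on the Frobenius identity $(\cdot)^q$ being additive, cancellation of a degree-one factor in an integral domain, and comparison of coefficients with the binomial expansion. The paper works with $(x-y)^q = x^q - y^q$ in two variables and the geometric-sum factorization of $x^q - y^q$; yours is the one-variable specialization obtained by substituting $y = -1$, with the telescoping check correctly handling the characteristic-two case.
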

\begin{proof}
In the conventional polynomial ring $ \mathbb{F}_q[x,y] $ with commutative variables $ xy = yx $, it holds that
$$ (x-y)(x-y)^{q-1} = (x-y)^q = x^q - y^q = (x-y) \left( \sum_{i=0}^{q-1} x^iy^{q-1-i} \right). $$
Cancelling $ x-y \neq 0 $ on both sides and applying Newton's binomial formula for $ (x-y)^{q-1} $, we conclude that
$$ \sum_{i=0}^{q-1} (-1)^i \binom{q-1}{i} x^iy^{q-1-i} = (x-y)^{q-1} = \sum_{i=0}^{q-1} x^iy^{q-1-i}, $$
and the result follows.
\end{proof}

With these tools we may now complete the proof of Theorem \ref{theorem description vector der}.

\begin{proof}[Proof of Theorem \ref{theorem description vector der}]
Let $ c \in \mathbb{F}_q^* $ be a primitive element and define $ S = \sigma(c) \in \mathbb{F}_q^{n \times n} $ and $ T = \tau(c) \in \mathbb{F}_q^{n \times n} $. By Newton's binomial formula and Lemma \ref{lemma proof vector description 2}, it holds that 
\begin{equation}
(T - S)^{q-1} = \sum_{i=0}^{q-1} (-1)^i \binom{q-1}{i} T^i S^{q-1-i} = \sum_{i=0}^{q-1} T^i S^{q-1-i}.
\label{eq proof description vector der binomial formula}
\end{equation}
Now, by Equation (\ref{eq proof description vector der binomial formula}) and Lemma \ref{lemma proof vector description 1}, we have that 
\begin{equation}
\delta(c) = \delta(c^q) = \left( \sum_{i=0}^{q-1} T^i S^{q-1-i} \right) \delta(c) = (T-S)^{q-1} \delta(c).
\label{eq proof description vector der}
\end{equation}
Define now the vector (note that $ q \geq 2 $)
\begin{equation}
\boldsymbol\lambda = (T-S)^{q-2} \delta(c) \in \mathbb{F}_q^n . 
\label{eq def lambda for inne der}
\end{equation}
By definition of $ \boldsymbol\lambda $, $ S $ and $ T $, and by (\ref{eq proof description vector der}), we have that
$$ \delta(c) = (\tau(c) - \sigma(c)) \boldsymbol\lambda. $$  
Define then the map $ \partial : \mathbb{F}_q \longrightarrow \mathbb{F}_q^n $ by
$$ \partial(a) = (\tau(a) - \sigma(a)) \boldsymbol\lambda, $$
for all $ a \in \mathbb{F}_q $. Since $ \sigma(a) \tau(b) = \tau(b) \sigma(a) $ for all $ a,b \in \mathbb{F}_q $, by the first item in Theorem \ref{theorem description vector der}, we have that $ \partial $ is a $ (\sigma, \tau) $-derivation.

Finally, take any $ a \in \mathbb{F}_q^* $. There exists an integer $ 0 \leq j \leq q-1 $ such that $ a = c^{j+1} $. By Lemma \ref{lemma proof vector description 1} and the fact that $ \delta(c) = \partial(c) $, we conclude that
$$ \delta(a) = \delta \left( c^{j+1} \right) = \left( \sum_{i=0}^j \tau(c)^i \sigma(c)^{j-i} \right) \delta(c) = \left( \sum_{i=0}^j \tau(c)^i \sigma(c)^{j-i} \right) \partial(c) = \partial(a), $$
and we are done.
\end{proof}

\begin{remark}
Observe that (\ref{eq def lambda for inne der}) gives an explicit formula for $ \boldsymbol\lambda $ based on the evaluations of $ \sigma $, $ \tau $ and $ \delta $ at a primitive element of $ \mathbb{F}_q $.
\end{remark}

Note that over infinite fields there exist non-zero $ {\rm Id} $-derivations, which then cannot be inner, since any inner $ {\rm Id} $-derivation over a field is necessarily zero. This holds for any $ n \geq 1 $.

Recall also that, if $ n = 1 $, then all $ \sigma $-derivations are inner $ \sigma $-derivations even over infinite fields as long as $ \sigma \neq {\rm Id} $ (see \cite[Sec. 8.3]{cohn} or \cite[Prop. 39]{linearizedRS}). Interestingly, if $ n > 1 $, there are $ \sigma $-derivations that are not inner $ \sigma $-derivations over infinite fields even if $ \sigma \neq {\rm Id} $, as the next example shows.

\begin{example} \label{ex non inner derivation}
Let $ \mathbb{F} = \mathbb{F}_p (z) $ be the field of rational functions over $ \mathbb{F}_p $, for a prime number $ p $. Consider the ring morphism $ \sigma : \mathbb{F}_p (z) \longrightarrow \mathbb{F}_p (z)^{2 \times 2} $ given by
$$ \sigma(f(z)) = \left( \begin{array}{cc}
f(z) & f^\prime(z)  \\
0 & f(z)
\end{array} \right), $$
for $ f(z) \in \mathbb{F}_p (z) $, where $ f(z) \mapsto f^\prime(z) $ is the usual derivation in $ \mathbb{F}_p (z) $. Then the map $ \delta : \mathbb{F}_p (z) \longrightarrow \mathbb{F}_p (z)^2 $, given by
$$ \delta(f(z)) = \left( \begin{array}{c}
f^{\prime \prime}(z) \\
2 f^\prime(z) 
\end{array} \right), $$
is a $ \sigma $-derivation that is not an inner $ \sigma $-derivation. The proof is left to the reader.
\end{example}

\subsection{All morphisms are diagonalizable} \label{subsec morphisms}

In this subsection, we will prove that we may ``diagonalize'' all ring morphisms $ \sigma: \mathbb{F}_q \longrightarrow \mathbb{F}_q^{n \times n} $. The formal definition is as follows.

\begin{definition} \label{def diagonalizable morphisms}
Given an arbitrary division ring $ \mathbb{F} $ and a ring morphism $ \sigma : \mathbb{F} \longrightarrow \mathbb{F}^{n \times n} $, we say that $ \sigma $ is diagonalizable if there exists an invertible matrix $ A \in \mathbb{F}^{n \times n} $ and ring endomorphisms $ \sigma_1, \sigma_2, \ldots, \sigma_n : \mathbb{F} \longrightarrow \mathbb{F} $ such that
$$ \sigma(a) = A {\rm diag}(\sigma_1(a), \sigma_2(a), \ldots, \sigma_n(a)) A^{-1} = A \left( \begin{array}{cccc}
\sigma_1 (a) & 0 & \ldots & 0 \\
0 & \sigma_2 (a) & \ldots & 0 \\
\vdots & \vdots & \ddots & \vdots \\
0 & 0 & \ldots & \sigma_n (a) \\
\end{array} \right) A^{-1}, $$
for all $ a \in \mathbb{F} $.
\end{definition}

Note that if $ \tau : \mathbb{F} \longrightarrow \mathbb{F}^{n \times n} $ is a ring morphism over a division ring $ \mathbb{F} $, then so is $ \sigma : \mathbb{F} \longrightarrow \mathbb{F}^{n \times n} $ given by $ \sigma(a) = A \tau(a) A^{-1} $, for all $ a \in \mathbb{F} $, and for an invertible matrix $ A \in \mathbb{F}^{n \times n} $. See also Proposition \ref{prop conjugation of morphism and derivation} in Subsection \ref{subsec def of varphi}.

The main result of this section is the following.

\begin{theorem} \label{theorem description matrix morph}
Any ring morphism $ \sigma : \mathbb{F}_q \longrightarrow \mathbb{F}_q^{n \times n} $ over a finite field $ \mathbb{F}_q $ is diagonalizable.
\end{theorem}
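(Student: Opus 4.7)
The plan is to reduce the problem to diagonalizing a single matrix, namely the image of a primitive element of $ \mathbb{F}_q^* $. Let $ c \in \mathbb{F}_q^* $ be a primitive element and set $ S = \sigma(c) \in \mathbb{F}_q^{n \times n} $. Since $ \sigma $ is a ring morphism sending $ 1 $ to $ I $, it is an $ \mathbb{F}_p $-algebra morphism, and hence $ \sigma $ is completely determined by the single value $ S = \sigma(c) $, because $ \mathbb{F}_q = \mathbb{F}_p[c] $. So the entire problem boils down to producing a simultaneous diagonalization of $ S $.

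First I would observe that $ S $ satisfies $ S^q = S $, i.e., $ S $ is a root of the polynomial $ x^q - x \in \mathbb{F}_q[x] $, because $ c^q = c $ in $ \mathbb{F}_q $ and $ \sigma $ is a ring morphism. Since
$$ x^q - x = \prod_{a \in \mathbb{F}_q} (x - a) $$
splits into pairwise distinct linear factors over $ \mathbb{F}_q $, the minimal polynomial of $ S $ over $ \mathbb{F}_q $ is squarefree with all roots in $ \mathbb{F}_q $. Therefore $ S $ is diagonalizable over $ \mathbb{F}_q $: there exists an invertible $ A \in \mathbb{F}_q^{n \times n} $ and scalars $ c_1, c_2, \ldots, c_n \in \mathbb{F}_q $ such that $ S = A \, {\rm diag}(c_1, c_2, \ldots, c_n) \, A^{-1} $.

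Next, I would identify the possible $ c_i $ as Galois conjugates of $ c $. Let $ m(x) \in \mathbb{F}_p[x] $ be the minimal polynomial of $ c $ over $ \mathbb{F}_p $, which has degree $ m $ where $ q = p^m $ (because $ c $ is primitive). Applying $ \sigma $ to the identity $ m(c) = 0 $, and using that $ \sigma $ restricted to $ \mathbb{F}_p $ sends $ k $ to $ kI $, yields $ m(S) = 0 $; hence $ m(c_i) = 0 $ for each $ i $. The roots of $ m $ in $ \mathbb{F}_q $ are exactly the Galois conjugates $ c, c^p, c^{p^2}, \ldots, c^{p^{m-1}} $, so for each $ i $ there is an integer $ 0 \le j_i \le m-1 $ with $ c_i = c^{p^{j_i}} $. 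Setting $ \sigma_i(a) = a^{p^{j_i}} $ produces ring endomorphisms (Frobenius maps) of $ \mathbb{F}_q $ by Proposition \ref{prop description morphisms univariate}, satisfying $ \sigma_i(c) = c_i $.

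Finally, I would glue these together: the map $ \tau : \mathbb{F}_q \longrightarrow \mathbb{F}_q^{n \times n} $ defined by $ \tau(a) = A \, {\rm diag}(\sigma_1(a), \sigma_2(a), \ldots, \sigma_n(a)) \, A^{-1} $ is a ring morphism (composition of a diagonal ring morphism with a conjugation), and satisfies $ \tau(c) = A \, {\rm diag}(c_1, \ldots, c_n) \, A^{-1} = S = \sigma(c) $. Since $ \mathbb{F}_q = \mathbb{F}_p[c] $ and both $ \sigma $ and $ \tau $ are $ \mathbb{F}_p $-algebra morphisms agreeing on the generator $ c $, they coincide on all of $ \mathbb{F}_q $, which yields the claimed diagonal form. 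The only real step here is the passage from $ S^q = S $ to diagonalizability, everything else is bookkeeping; I do not anticipate a serious obstacle, since the entire argument rests on the key fact that $ x^q - x $ splits with simple roots over $ \mathbb{F}_q $, a feature specific to finite fields that (consistently with the examples in the paper) fails over infinite fields.
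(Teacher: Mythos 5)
Your proof is correct, and it takes a genuinely different and noticeably shorter route than the paper's. The decisive step you use is the identity $ S^q = \sigma(c^q) = \sigma(c) = S $, which forces the minimal polynomial of $ S $ to divide $ x^q - x = \prod_{a \in \mathbb{F}_q}(x-a) $ and hence to be squarefree and split over $ \mathbb{F}_q $; diagonalizability of $ S $ over $ \mathbb{F}_q $ itself then follows in one stroke. The paper instead passes to a Jordan canonical form of $ S $ over an extension $ \mathbb{F}_{q^r} $, shows that the diagonal entries of the resulting block-triangular morphism are field morphisms satisfying $ \varphi(a)^q = \varphi(a) $ (so the eigenvalues already lie in $ \mathbb{F}_q $), descends the conjugating matrix to $ \mathbb{F}_q $ via the rational canonical form, and finally kills the off-diagonal Jordan entries by observing that the superdiagonal functions are $ (\varphi^{(i)}, \varphi^{(i)}) $-derivations, which vanish by Theorem \ref{theorem description vector der}. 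Your argument bypasses the extension field, the descent step, and the appeal to the derivation theorem entirely, so it is more elementary and self-contained; what the paper's route buys in exchange is a structural explanation of why nontrivial Jordan blocks cannot occur (the nilpotent part would be a $ (\sigma,\sigma) $-derivation), which ties the two main theorems of Section \ref{sec explicit finite fields} together. Your identification of the eigenvalues as Galois conjugates $ c^{p^{j_i}} $ via the minimal polynomial of $ c $ over $ \mathbb{F}_p $, and the conclusion that $ \sigma $ and the diagonal morphism $ \tau $ agree because both are determined by their value at the primitive element $ c $, are both sound and match the conclusion of the paper; your approach is equally algorithmic, since it still reduces everything to diagonalizing the single matrix $ \sigma(c) $.
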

\begin{proof}
Fix a primitive element $ c \in \mathbb{F}_q^* $, and define $ S = \sigma(c) \in \mathbb{F}_q^{n \times n} $ and $ F(t) = \det(S - tI) \in \mathbb{F}_q[t] $. Since any finite algebraic extension of a finite field is again a finite field, we have that $ F(t) = \prod_{i=1}^k (t - \lambda_i)^{n_i} $, where $ \lambda_1, \lambda_2, \ldots, \lambda_k \in \mathbb{F}_{q^r} $ are pair-wise distinct, for a suitable finite field extension $ \mathbb{F}_{q^r} $ of $ \mathbb{F}_q $ and for positive integers $ n_1, n_2, \ldots, n_k $ such that $ n = n_1 + n_2 + \cdots + n_k $. 

Hence $ S $ admits a Jordan canonical form in $ \mathbb{F}_{q^r}^{n \times n} $ (see for instance \cite[Th. 25, p. 354]{birkhoff} or \cite[Sec. V.9]{lang-under}). In other words, there exists an invertible matrix $ A \in \mathbb{F}_{q^r}^{n \times n} $ such that
\begin{equation}
S = A \left( \begin{array}{cccc}
\Lambda_1 & 0 & \ldots & 0 \\
0 & \Lambda_2 & \ldots & 0 \\
\vdots & \vdots & \ddots & \vdots \\
0 & 0 & \ldots & \Lambda_n \\
\end{array} \right) A^{-1},
\label{eq jordan form primitive}
\end{equation}
where the matrix $ \Lambda_i $ is given by
\begin{equation}
\Lambda_i = \left( \begin{array}{cccccc}
\lambda_i & \varepsilon_{i,1} & 0 & \ldots & 0 & 0 \\
0 & \lambda_i & \varepsilon_{i,2} & \ldots & 0 & 0 \\
0 & 0 & \lambda_i & \ldots & 0 & 0 \\
\vdots & \vdots & \vdots & \ddots & \vdots & \vdots \\
0 & 0 & 0 & \ldots & \lambda_i & \varepsilon_{i,n_i-1} \\
0 & 0 & 0 & \ldots & 0 & \lambda_i \\
\end{array} \right) \in \mathbb{F}_{q^r}^{n_i \times n_i} ,
\label{eq each submatrix}
\end{equation}
where $ \varepsilon_{i,j} $ is either $ 1 $ or $ 0 $, for $ j = 1,2, \ldots, n_i-1 $, and for $ i = 1,2, \ldots, k $.

Now, for an arbitrary $ a \in \mathbb{F}_q^* $, there exists an integer $ 0 \leq j \leq q-1 $ such that $ a = c^j $. Using the fact that $ \sigma(a) = \sigma(c)^j $ and the expressions (\ref{eq jordan form primitive}) and (\ref{eq each submatrix}), we see that there exist maps $ \tau_i : \mathbb{F}_q \longrightarrow \mathbb{F}_{q^r}^{n_i \times n_i} $, for $ i = 1,2, \ldots, k $, such that
$$ \sigma(a) = A \left( \begin{array}{cccc}
\tau_1 (a) & 0 & \ldots & 0 \\
0 & \tau_2 (a) & \ldots & 0 \\
\vdots & \vdots & \ddots & \vdots \\
0 & 0 & \ldots & \tau_k (a) \\
\end{array} \right) A^{-1}, $$
for all $ a \in \mathbb{F}_q $. Moreover, there exist functions $ \varphi^{(i)} : \mathbb{F}_q \longrightarrow \mathbb{F}_{q^r} $ and $ \varphi_{u,v}^{(i)} : \mathbb{F}_q \longrightarrow \mathbb{F}_{q^r} $, for $ 1 \leq u < v \leq n_i $, such that
$$ \tau_i(a) = \left( \begin{array}{cccc}
\varphi^{(i)} (a) & \varphi_{1,2}^{(i)} (a) & \ldots & \varphi_{1,n_i}^{(i)} (a) \\
0 & \varphi^{(i)} (a) & \ldots & \varphi_{2,n_i}^{(i)} (a) \\
\vdots & \vdots & \ddots & \vdots \\
0 & 0 & \ldots & \varphi^{(i)} (a) \\
\end{array} \right), $$
for all $ a \in \mathbb{F}_q $, and for $ i = 1,2, \ldots, k $. 

Note that $ \tau : \mathbb{F}_q \longrightarrow \mathbb{F}_{q^r}^{n \times n} $, given by $ \tau(a) = A^{-1}\sigma(a) A $, for $ a \in \mathbb{F}_q $, is a ring morphism, as observed in Proposition \ref{prop conjugation of morphism and derivation} in Subsection \ref{subsec def of varphi}. In particular, the functions $ \varphi^{(i)} : \mathbb{F}_q \longrightarrow \mathbb{F}_{q^r} $ are field morphisms, for $ i = 1,2, \ldots, k $. Thus $ \varphi^{(i)}(a)^q = \varphi^{(i)}(a^q) = \varphi^{(i)}(a) $ for all $ a \in \mathbb{F}_q $, hence $ \varphi^{(i)} : \mathbb{F}_q \longrightarrow \mathbb{F}_q $, and we deduce that $ \lambda_i = \varphi^{(i)}(c) \in \mathbb{F}_q $ and, therefore, $ \varphi_{u,v}^{(i)} : \mathbb{F}_q \longrightarrow \mathbb{F}_q $, for all $ 1 \leq u < v \leq n_i $ and for all $ i = 1,2, \ldots, k $. In other words, $ \tau_i : \mathbb{F}_q \longrightarrow \mathbb{F}_q^{n_i \times n_i} $, for all $ i = 1,2, \ldots, k $, and $ \tau : \mathbb{F}_q \longrightarrow \mathbb{F}_q^{n \times n} $. In particular, $ \sigma(c), \tau(c) \in \mathbb{F}_q^{n \times n} $ are similar over $ \mathbb{F}_{q^r} $ (i.e., $ \sigma(c) = A \tau(c) A^{-1} $ and $ A \in \mathbb{F}_{q^r}^{n \times n} $ is invertible), hence they are also similar over $ \mathbb{F}_q $ (since they share the same rational canonical form, see \cite[Th. 42, p. 353]{birkhoff}), and therefore we may assume that $ A \in \mathbb{F}_q^{n \times n} $.

Next, the fact that $ \tau_i : \mathbb{F}_q \longrightarrow \mathbb{F}_q^{n_i \times n_i} $ is a ring morphism implies that $ \varphi_{j,j+1}^{(i)} : \mathbb{F}_q \longrightarrow \mathbb{F}_q $ are $ (\varphi^{(i)}, \varphi^{(i)}) $-derivations, for $ j = 1,2, \ldots, n_i-1 $, and for $ i = 1,2, \ldots, k $. By Theorem \ref{theorem description vector der}, it follows that $ \varphi_{j,j+1}^{(i)} (a) = 0 $ for all $ a \in \mathbb{F}_q $, and in particular, $ \varepsilon_{i,j} = \varphi_{j,j+1}^{(i)} (c) = 0 $, for all $ j = 1,2, \ldots, n_i-1 $ and all $ i = 1,2, \ldots, k $. Hence, we conclude that
$$ \tau_i(a) = \left( \begin{array}{cccc}
\varphi^{(i)} (a) & 0 & \ldots & 0 \\
0 & \varphi^{(i)} (a) & \ldots & 0 \\
\vdots & \vdots & \ddots & \vdots \\
0 & 0 & \ldots & \varphi^{(i)} (a) \\
\end{array} \right), $$
for all $ a \in \mathbb{F}_q $, and the result follows.
\end{proof}

\begin{remark}
Observe that the proof above gives an algorithmic method to diagonalize $ \sigma $, since it is based on diagonalizing the matrix $ \sigma(c) \in \mathbb{F}_q^{n \times n} $ for a primitive element $ c \in \mathbb{F}_q^* $. 
\end{remark}

\begin{remark}
Recall that, although any square matrix over a finite field admits a Jordan canonical form over its algebraic closure (thus over a finite extension), when it comes to diagionalization, any scenario may happen. As simple examples, the matrix
$$ \left( \begin{array}{ccc}
0 & 0 & -1 \\
1 & 0 & 1 \\
0 & 1 & 0
\end{array} \right) \in \mathbb{F}_3^{3 \times 3} $$
is diagonalizable over $ \mathbb{F}_{27} $ but not over $ \mathbb{F}_3 $, and the matrix
$$ \left( \begin{array}{cc}
1 & 1 \\
1 & 1
\end{array} \right) \in \mathbb{F}_2^{2 \times 2} $$
is not diagonalizable over any field extension of $ \mathbb{F}_2 $, since it is non-zero and nilpotent.
\end{remark}

Note that the well-known Skolem-Noether theorem \cite{skolem} cannot be directly applied to prove Theorem \ref{theorem description matrix morph}. Following \cite[Th. 2.10]{milne}, the theorem in general form reads as follows.

\begin{theorem} [\textbf{Skolem-Noether}] \label{th skolem-noether}
Let $ \mathbb{F} $ be a field. Given simple (and possibly non-commutative) $ \mathbb{F} $-algebras $ \mathcal{A} $ and $ \mathcal{B} $ such that $ \mathbb{F} $ is the center of $ \mathcal{B} $, then for any two $ \mathbb{F} $-algebra morphisms $ \sigma, \tau : \mathcal{A} \longrightarrow \mathcal{B} $, there exists an invertible element $ A \in \mathcal{B} $ such that 
\begin{equation}
\sigma(a) = A \tau(a) A^{-1},
\label{eq similar morphisms for skolem-noether}
\end{equation}
for all $ a \in \mathcal{A} $. 
\end{theorem}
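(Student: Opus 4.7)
The plan is the classical double-module-structure argument for Skolem--Noether: equip $\mathcal{B}$ with two left module structures over the enveloping-type algebra $\mathcal{C} := \mathcal{A} \otimes_\mathbb{F} \mathcal{B}^{\mathrm{op}}$, one induced by $\sigma$ and one by $\tau$, and then extract the conjugator $A$ from an isomorphism of $\mathcal{C}$-modules between them. Throughout I tacitly assume $\mathcal{A}$ and $\mathcal{B}$ are finite-dimensional over $\mathbb{F}$, as is standard and as is the setting of \cite[Th.~2.10]{milne}.

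First I would define, for any $\mathbb{F}$-algebra morphism $\rho : \mathcal{A} \to \mathcal{B}$, a left $\mathcal{C}$-action on $\mathcal{B}$ by the rule $(a \otimes b) \cdot x = \rho(a)\, x\, b$. Verifying associativity uses that $\rho$ is multiplicative and that the factors $\mathcal{A}$ and $\mathcal{B}^{\mathrm{op}}$ commute inside $\mathcal{C}$. Applied to $\sigma$ and to $\tau$, this yields two $\mathcal{C}$-modules, which I denote ${}_{\sigma}\mathcal{B}$ and ${}_{\tau}\mathcal{B}$, both of $\mathbb{F}$-dimension equal to $\dim_\mathbb{F} \mathcal{B}$.

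Next I would invoke the structural fact that $\mathcal{C}$ is itself a simple $\mathbb{F}$-algebra. Because $\mathbb{F}$ is the center of $\mathcal{B}$ it is also the center of $\mathcal{B}^{\mathrm{op}}$, so $\mathcal{B}^{\mathrm{op}}$ is central simple, and the classical lemma that the tensor product of a simple $\mathbb{F}$-algebra with a central simple $\mathbb{F}$-algebra is again simple applies to give simplicity of $\mathcal{C}$. By Artin--Wedderburn, finitely generated modules over the simple artinian algebra $\mathcal{C}$ are classified up to isomorphism by their $\mathbb{F}$-dimension, so there exists a $\mathcal{C}$-linear isomorphism $\phi : {}_{\sigma}\mathcal{B} \longrightarrow {}_{\tau}\mathcal{B}$.

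To extract $A$, I would specialize $\mathcal{C}$-linearity in two steps. Setting $a = 1$ in the identity $\phi(\sigma(a)\,x\,b) = \tau(a)\,\phi(x)\,b$ shows that $\phi$ is right $\mathcal{B}$-linear, so $\phi(x) = \phi(1)\,x =: B x$, with $B$ a unit of $\mathcal{B}$ since $\phi$ is an isomorphism. Setting then $x = b = 1$ yields $B\sigma(a) = \phi(\sigma(a)) = \tau(a)\,B$, hence $\tau(a) = B\sigma(a)B^{-1}$; taking $A = B^{-1}$ delivers the statement in the displayed form $\sigma(a) = A\tau(a)A^{-1}$. The main obstacle is the simplicity of $\mathcal{A} \otimes_\mathbb{F} \mathcal{B}^{\mathrm{op}}$: this requires more than just simplicity of each factor and genuinely uses the centrality hypothesis on $\mathcal{B}$. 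It is also precisely where the theorem fails to apply to the setting of Subsection \ref{subsec morphisms}, since the morphisms $\sigma : \mathbb{F}_q \to \mathbb{F}_q^{n\times n}$ considered there are in general not $\mathbb{F}_q$-algebra morphisms with respect to the scalar embedding of $\mathbb{F}_q$ as the center of $\mathbb{F}_q^{n\times n}$.
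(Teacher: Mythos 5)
The paper does not prove this statement; it quotes it as the classical Skolem--Noether theorem with a citation to \cite[Th. 2.10]{milne}, whose proof is precisely the double-module argument you give. Your write-up is correct and complete for the standard (finite-dimensional) setting: the action of $\mathcal{A}\otimes_{\mathbb{F}}\mathcal{B}^{\mathrm{op}}$ is well defined, simplicity of the tensor product is exactly where centrality of $\mathcal{B}$ enters, Artin--Wedderburn classifies the two module structures by their $\mathbb{F}$-dimension, and the extraction of the unit $B=\phi(1)$ together with the identity $B\sigma(a)=\tau(a)B$ yields the conjugation formula with $A=B^{-1}$.
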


Observe that the assumptions in Theorem \ref{th skolem-noether} do not hold in the case of Theorem \ref{theorem description matrix morph} when setting $ \mathcal{A} = \mathbb{F}_q $ and $ \mathcal{B} = \mathbb{F}_q^{n \times n} $ (resp. $ \mathcal{B} = \sigma(\mathbb{F}_q) \subseteq \mathbb{F}_q^{n \times n} $). First, the center of $ \mathbb{F}_q^{n \times n} $ contains $ \mathbb{F}_q $ (resp. $ \sigma(\mathbb{F}_q) $). However, the map $ \sigma $ in Theorem \ref{theorem description matrix morph} is not linear over $ \mathbb{F}_q $ (resp. $ \sigma(\mathbb{F}_q) \cong \mathbb{F}_q $) in general. Hence the assumptions in Theorem \ref{th skolem-noether} do not hold. 

In fact, it is not true that any two ring morphisms $ \sigma, \tau : \mathbb{F}_q \longrightarrow \mathbb{F}_q^{n \times n} $ are related as in (\ref{eq similar morphisms for skolem-noether}), even if $ \sigma(\mathbb{F}_q) = A \tau(\mathbb{F}_q) A^{-1} $ for all invertible matrices $ A \in \mathbb{F}_q^{n \times n} $. We show this in the next example. Note that this observation is trivial when $ n = 1 $.

\begin{example} \label{ex two non similar morphisms}
We will fix $ n = 2 $ for illustration purposes, although this example works for any $ n \geq 1 $ and it is trivial for $ n = 1 $. Let $ q = p^4 $ for some prime number $ p $, and let $ \sigma : \mathbb{F}_q \longrightarrow \mathbb{F}_q^{2 \times 2} $ be given by
$$ \sigma(a) = \left( \begin{array}{cc}
a^{p^2} & 0 \\
0 & a^{p^2}
\end{array} \right), $$
for all $ a \in \mathbb{F}_q $. Since $ (a^{p^2})^{p^2} = a^{p^4} = a $, for all $ a \in \mathbb{F}_q $, we deduce that $ \sigma(\mathbb{F}_q) = \{ a I \in \mathbb{F}_q^{2 \times 2} \mid a \in \mathbb{F}_q \} $. Let $ \tau = {\rm Id} : \mathbb{F}_q \longrightarrow \mathbb{F}_q^{2 \times 2} $. Obviously, it holds that $ \sigma(\mathbb{F}_q) = \tau(\mathbb{F}_q) $, or even further, $ \sigma(\mathbb{F}_q) = A \tau(\mathbb{F}_q) A^{-1} $, for all invertible matrices $ A \in \mathbb{F}_q^{2 \times 2} $. However, if there existed an invertible matrix $ A \in \mathbb{F}_q^{2 \times 2} $ such that $ \sigma(a) = A \tau(a) A^{-1} $, for all $ a \in \mathbb{F}_q $, then the elements fixed by $ \sigma $ (i.e., those $ a \in \mathbb{F}_q $ such that $ \sigma(a) = aI $) would be the same as those fixed by $ \tau $. This is not true, as the elements fixed by $ \sigma $ form $ \mathbb{F}_{p^2} $, whereas those fixed by $ \tau $ form $ \mathbb{F}_{p^4} $.
\end{example}

Example \ref{ex two non similar morphisms} also shows that we lose information on similarity when looking at the set $ \sigma(\mathbb{F}_q) \subseteq \mathbb{F}_q^{n \times n} $. Observe that, if $  \mathbb{F} $ is a field and $ \sigma : \mathbb{F} \longrightarrow \mathbb{F}^{n \times n} $ is a ring morphism, then $ \sigma(\mathbb{F}) $ is a subfield of the ring $ \mathbb{F}^{n \times n} $. Subfields of $ \mathbb{F}^{n \times n} $ are generally called matrix fields. Two subfields $ \mathcal{A}, \mathcal{B} \subseteq \mathbb{F}^{n \times n} $ are called similar if there exists an invertible matrix $ A \in \mathbb{F}^{n \times n} $ such that $ \mathcal{A} = A \mathcal{B} A^{-1} $. Matrix fields over finite fields and their similarity have been studied earlier, for instance in \cite{beard2, beard1}. However, as Example \ref{ex two non similar morphisms} shows, it may hold that $ \sigma(\mathbb{F}_q), \tau(\mathbb{F}_q) \subseteq \mathbb{F}_q^{n \times n} $ are similar matrix fields (or even equal), but still $ \sigma, \tau : \mathbb{F}_q \longrightarrow \mathbb{F}_q^{n \times n} $ are not similar ring morphisms (i.e., there is no invertible matrix $ A \in \mathbb{F}_q^{n \times n} $ such that $ \sigma(a) = A \tau(a) A^{-1} $, for all $ a \in \mathbb{F}_q $).

We conclude by showing that such a diagonalization of ring morphisms $ \sigma : \mathbb{F} \longrightarrow \mathbb{F}^{n \times n} $ is not always possible over infinite fields.

\begin{example} \label{ex non diagonalizable morphism}
Let $ \mathbb{F} = \mathbb{F}_p (z) $ be the field of rational functions over $ \mathbb{F}_p $, for a prime number $ p $. The ring morphism $ \sigma : \mathbb{F}_p (z) \longrightarrow \mathbb{F}_p (z)^{2 \times 2} $ given by
$$ \sigma(f(z)) = \left( \begin{array}{cc}
f(z) & f^\prime(z) \\
0 & f(z)
\end{array} \right), $$
for $ f(z) \in \mathbb{F}_p (z) $, as in Example \ref{ex non inner derivation}, is not diagonalizable: Similar to the previous example, the subfield of $ \mathbb{F}_p (z) $ fixed by $ \sigma $ is $ \mathbb{F}_p (z^p) $, but there is no field endomorphism of $ \mathbb{F}_p (z) $, other than the identity, leaving the elements in $ \mathbb{F}_p (z^p) $ fixed (see also \cite[Ex. 48]{linearizedRS}).
\end{example}

\section{Linear transformations of variables} \label{sec linear transformations}

In the previous section, we showed that all ring morphisms $ \sigma : \mathbb{F}_q \longrightarrow \mathbb{F}_q^{n \times n} $ over a finite field $ \mathbb{F}_q $ are diagonalizable (Theorem \ref{theorem description matrix morph}), that is, they are similar (i.e., as in Proposition \ref{prop conjugation of morphism and derivation} below) to a diagonal ring morphism. 

In this section, we show that similar ring morphisms allow to naturally define linear transformations of variables between the corresponding multivariate skew polynomial rings. Furthermore, such transformations are ring isomorphisms and preserve evaluations and degrees, hence can be naturally extended to non-free multivariate skew polynomial rings (see Section \ref{sec preliminaries}). Combined with Theorem \ref{theorem description matrix morph}, these linear transformations will show in Section \ref{sec classification finite fields} how to simplify and classify multivariate skew polynomial rings over finite fields.

Each subsection is devoted to a different property of the mentioned ring isomorphism, which will be denoted by $ \varphi_A $ and depends on the invertible matrix $ A \in \mathbb{F}^{n \times n} $ that gives the linear transformation. Throughout this section, $ \mathbb{F} $ may be any division ring.

\subsection{Definition of the map $ \varphi_A $} \label{subsec def of varphi}

We start by formalizing the following observation, which we have mentioned earlier in the paper.

\begin{proposition} \label{prop conjugation of morphism and derivation}
Let $ \tau : \mathbb{F} \longrightarrow \mathbb{F}^{n \times n} $ be a ring morphism and let $ \delta_\tau : \mathbb{F} \longrightarrow \mathbb{F}^n $ be a $ \tau $-derivation. For an invertible matrix $ A \in \mathbb{F}^{n \times n} $, define the maps $ \sigma : \mathbb{F} \longrightarrow \mathbb{F}^{n \times n} $ and $ \delta_\sigma : \mathbb{F} \longrightarrow \mathbb{F}^n $ by
$$ \sigma (a) = A \tau (a) A^{-1} \quad \textrm{and} \quad \delta_\sigma(a) = A \delta_\tau(a), $$
for all $ a \in \mathbb{F} $. Then $ \sigma $ a ring morphism and $ \delta $ is a $ \sigma $-derivation.
\end{proposition}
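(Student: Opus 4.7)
The plan is a straightforward direct verification of the two defining properties (ring morphism for $\sigma$, and $\sigma$-derivation for $\delta_\sigma$), where the only trick is to insert the identity $A^{-1}A$ between factors so that the conjugation $A(\cdot)A^{-1}$ distributes across products. I do not expect any genuine obstacle; the argument is essentially bookkeeping, but I will be careful about the order of multiplication since $\mathbb{F}$ is only assumed to be a division ring.

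First, I would show that $\sigma$ is a ring morphism. Additivity is immediate from additivity of $\tau$ together with the linearity of the map $M \mapsto AMA^{-1}$. For the multiplicative law, I would write
\[
\sigma(ab) = A\tau(ab)A^{-1} = A\tau(a)\tau(b)A^{-1} = (A\tau(a)A^{-1})(A\tau(b)A^{-1}) = \sigma(a)\sigma(b),
\]
inserting $A^{-1}A = I$ in the middle. Finally, $\sigma(1) = A\tau(1)A^{-1} = AIA^{-1} = I$, since ring morphisms in this paper send identity to identity.

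Second, I would show that $\delta_\sigma$ is a $\sigma$-derivation. Additivity follows at once from additivity of $\delta_\tau$ and left-multiplication by $A$. For the Leibniz rule, I would expand
\[
\delta_\sigma(ab) = A\delta_\tau(ab) = A\bigl(\tau(a)\delta_\tau(b) + \delta_\tau(a)b\bigr) = A\tau(a)A^{-1}\bigl(A\delta_\tau(b)\bigr) + \bigl(A\delta_\tau(a)\bigr)b,
\]
which is exactly $\sigma(a)\delta_\sigma(b) + \delta_\sigma(a)b$. The only mild point to note is that $b \in \mathbb{F}$ is a scalar acting on a column vector in $\mathbb{F}^n$ from the right, so left-multiplication by $A$ commutes past it, i.e.\ $A(\delta_\tau(a)b) = (A\delta_\tau(a))b$; this uses nothing beyond the bilinearity of the matrix-vector and vector-scalar products over $\mathbb{F}$.

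Taken together, these two computations yield the proposition. Since everything reduces to elementary associativity and the fact that $AA^{-1} = A^{-1}A = I$, no step is delicate, and the proof is essentially three lines per axiom.
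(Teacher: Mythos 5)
Your verification is correct, and it is exactly the routine computation the paper has in mind (the paper in fact states this proposition as an observation and omits the proof entirely). Your attention to the right scalar action $A(\delta_\tau(a)b) = (A\delta_\tau(a))b$ over a possibly non-commutative $\mathbb{F}$ is the only point of any delicacy, and you handle it correctly via associativity.
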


Linear transformations of variables are then defined as follows.

\begin{definition} \label{def map phi A}
Let $ \sigma, \tau : \mathbb{F} \longrightarrow \mathbb{F}^{n \times n} $ be ring morphisms and let $ \delta_\sigma, \delta_\tau : \mathbb{F} \longrightarrow \mathbb{F}^n $ be a $ \sigma $-derivation and a $ \tau $-derivation, respectively, as in the previous proposition, for an invertible matrix $ A \in \mathbb{F}^{n \times n} $. We define the map
\begin{equation}
\varphi_A : \mathbb{F}[\mathbf{x}; \sigma, \delta_\sigma] \longrightarrow \mathbb{F}[\mathbf{x}; \tau, \delta_\tau]
\label{eq definition of phi A}
\end{equation}
as follows. First, we define $ \varphi_A(1) = 1 $ and
$$ \varphi_A(\mathbf{x}) = \left( \begin{array}{c}
\varphi_A(x_1) \\
\varphi_A(x_2) \\
\vdots \\
\varphi_A(x_n)
\end{array} \right) = \left( \begin{array}{c}
\sum_{j=1}^n a_{1,j}x_j \\
\sum_{j=1}^n a_{2,j}x_j \\
\vdots \\
\sum_{j=1}^n a_{n,j}x_j
\end{array} \right) = A \mathbf{x}. $$
Next we define $ \varphi_A(\mathfrak{m}(\mathbf{x})) $ recursively on monomials $ \mathfrak{m}(\mathbf{x}) \in \mathcal{M} $. Assume that $ \varphi_A(\mathfrak{m}(\mathbf{x})) $ is defined, for a given $ \mathfrak{m}(\mathbf{x}) \in \mathcal{M} $. Then we define
$$ \varphi_A(\mathbf{x} \mathfrak{m}(\mathbf{x})) = \left( \begin{array}{c}
\varphi_A(x_1 \mathfrak{m}(\mathbf{x})) \\
\varphi_A(x_2 \mathfrak{m}(\mathbf{x})) \\
\vdots \\
\varphi_A(x_n \mathfrak{m}(\mathbf{x}))
\end{array} \right) = \left( \begin{array}{c}
\sum_{j=1}^n a_{1,j}x_j \varphi_A(\mathfrak{m}(\mathbf{x})) \\
\sum_{j=1}^n a_{2,j}x_j \varphi_A(\mathfrak{m}(\mathbf{x})) \\
\vdots \\
\sum_{j=1}^n a_{n,j}x_j \varphi_A(\mathfrak{m}(\mathbf{x}))
\end{array} \right) = A \mathbf{x} \varphi_A(\mathfrak{m}(\mathbf{x})). $$
Finally, if $ F(\mathbf{x}) = \sum_{\mathfrak{m}(\mathbf{x}) \in \mathcal{M}} F_\mathfrak{m} \mathfrak{m}(\mathbf{x}) \in \mathbb{F}[\mathbf{x}; \sigma, \delta_\sigma] $, where $ F_\mathfrak{m} \in \mathbb{F} $, for all $ \mathfrak{m}(\mathbf{x}) \in \mathcal{M} $, we define
\begin{equation}
\varphi_A(F(\mathbf{x})) = \sum_{\mathfrak{m}(\mathbf{x}) \in \mathcal{M}} F_\mathfrak{m} \varphi_A(\mathfrak{m}(\mathbf{x})).
\label{eq def by linearity of phi A}
\end{equation}
Given $ F(\mathbf{x}) \in \mathbb{F}[\mathbf{x}; \sigma, \delta_\sigma] $ or $ \mathfrak{m}(\mathbf{x}) \in \mathcal{M} $, we will use the notation
\begin{equation}
F(A \mathbf{x}) = \varphi_A(F(\mathbf{x})) \quad \textrm{and} \quad \mathfrak{m}(A \mathbf{x}) = \varphi_A(\mathfrak{m} (\mathbf{x})).
\label{eq notation linear transform}
\end{equation}
\end{definition} 

\begin{remark}
Observe that, if $ \mathbb{F} $ is a field and $ \tau = {\rm Id} $, then $ \sigma = \tau $ and $ \delta_\sigma = A \delta_\tau $ represents a linear transformation of the standard derivations in $ \delta_\tau $. In case $ \delta_\tau = 0 $, then $ \delta_\sigma = 0 $, and both skew polynomial rings in (\ref{eq definition of phi A}) are the conventional free multivariate polynomial ring $ \mathbb{F}[\mathbf{x}] $. In that case, $ \varphi_A(F(\mathbf{x})) $ coincides with the usual definition of $ F(A \mathbf{x}) $ that consists in substituting $ \mathbf{x} $ by $ A \mathbf{x} $, for any $ F(\mathbf{x}) \in \mathbb{F}[\mathbf{x}] $. Observe that if $ \mathbb{F} $ is a non-commutative division ring and $ \tau = {\rm Id} $, then the ring morphism $ \sigma : \mathbb{F} \longrightarrow \mathbb{F}^{n \times n} $ given by $ \sigma(a) = A (aI) A^{-1} $, for $ a \in \mathbb{F} $, is not necessarily the identity if the entries of $ A $ do not lie in the center of $ \mathbb{F} $.
\end{remark}

\subsection{The map $ \varphi_A $ is additive and multiplicative}

In this subsection, we will prove that the map $ \varphi_A $ in Definition \ref{def map phi A} is a ring morphism.

\begin{proposition} \label{prop phi A is ring morphism}
Let $ \sigma, \tau : \mathbb{F} \longrightarrow \mathbb{F}^{n \times n} $ be ring morphisms and let $ \delta_\sigma, \delta_\tau : \mathbb{F} \longrightarrow \mathbb{F}^n $ be a $ \sigma $-derivation and a $ \tau $-derivation, respectively, as in Proposition \ref{prop conjugation of morphism and derivation}, for an invertible matrix $ A \in \mathbb{F}^{n \times n} $. The map $ \varphi_A : \mathbb{F}[\mathbf{x}; \sigma, \delta_\sigma] \longrightarrow \mathbb{F}[\mathbf{x}; \tau, \delta_\tau] $ in Definition \ref{def map phi A} is a ring morphism. That is, with notation as in (\ref{eq notation linear transform}), it holds that
$$ (F + G)(A \mathbf{x}) = F(A\mathbf{x}) + G(A\mathbf{x}) \quad \textrm{and} \quad (FG)(A \mathbf{x}) = F(A\mathbf{x}) G(A\mathbf{x}), $$
for all $ F(\mathbf{x}), G(\mathbf{x}) \in \mathbb{F}[\mathbf{x}; \sigma, \delta_\sigma] $, where the operations on the right-hand sides are in $ \mathbb{F}[\mathbf{x}; \tau, \delta_\tau] $, and the operations on the left-hand sides are in $ \mathbb{F}[\mathbf{x}; \sigma, \delta_\sigma] $.
\end{proposition}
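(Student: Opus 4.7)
The plan is to prove additivity immediately from the definition and then reduce multiplicativity to a single key compatibility: that the image vector $\varphi_A(\mathbf{x}) = A\mathbf{x}$ satisfies, inside $\mathbb{F}[\mathbf{x};\tau,\delta_\tau]$, the commutation rule of the source ring, namely
$$ (A\mathbf{x})\, a \;=\; \sigma(a)\,(A\mathbf{x}) + \delta_\sigma(a) $$
for every $a \in \mathbb{F}$. This is immediate by applying the target commutation rule $\mathbf{x}a = \tau(a)\mathbf{x} + \delta_\tau(a)$, multiplying on the left by $A$, and invoking Proposition \ref{prop conjugation of morphism and derivation}, which ensures $A\tau(a) = \sigma(a)A$ and $A\delta_\tau(a) = \delta_\sigma(a)$. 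Additivity of $\varphi_A$ is then automatic, since the final step (\ref{eq def by linearity of phi A}) of the definition extends $\varphi_A$ by $\mathbb{F}$-linearity over the free basis $\mathcal{M}$.

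For multiplicativity, I would first upgrade the compatibility above to the statement that $\varphi_A(x_i\, G) = \varphi_A(x_i)\, \varphi_A(G)$ for every $G \in \mathbb{F}[\mathbf{x};\sigma,\delta_\sigma]$. This is done by writing $G = \sum_\mathfrak{m} G_\mathfrak{m}\, \mathfrak{m}(\mathbf{x})$, expanding $x_i G$ in $\mathcal{R}_\sigma$ by commuting $x_i$ past every coefficient $G_\mathfrak{m}$ through (\ref{eq def inner product}), and then applying $\varphi_A$. The recursive clause $\varphi_A(x_j \mathfrak{m}) = \varphi_A(x_j)\varphi_A(\mathfrak{m})$, together with additivity and the identity established in the previous paragraph (read now from right to left in $\mathcal{R}_\tau$), allows the resulting sum to be collapsed into $\varphi_A(x_i)\varphi_A(G)$.

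Next, a straightforward induction on $\deg(\mathfrak{m})$ yields $\varphi_A(\mathfrak{m}\, G) = \varphi_A(\mathfrak{m})\, \varphi_A(G)$ for every monomial $\mathfrak{m} \in \mathcal{M}$ and every $G \in \mathbb{F}[\mathbf{x};\sigma,\delta_\sigma]$: the base case $\mathfrak{m} = 1$ is trivial, and the inductive step writes $\mathfrak{m} = x_i \mathfrak{m}'$ and uses the previous paragraph together with the recursive definition of $\varphi_A$ on monomials. Finally, for arbitrary $F = \sum_\mathfrak{m} F_\mathfrak{m}\, \mathfrak{m}(\mathbf{x})$, left $\mathbb{F}$-linearity of $\varphi_A$ and the fact that multiplication in $\mathbb{F}[\mathbf{x};\sigma,\delta_\sigma]$ distributes over left-linear combinations gives
$$ \varphi_A(FG) \,=\, \sum_\mathfrak{m} F_\mathfrak{m}\, \varphi_A(\mathfrak{m}\, G) \,=\, \sum_\mathfrak{m} F_\mathfrak{m}\, \varphi_A(\mathfrak{m})\, \varphi_A(G) \,=\, \varphi_A(F)\, \varphi_A(G), $$
completing the proof.

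The main obstacle is essentially the first step: verifying that the defining commutation relation of $\mathcal{R}_\sigma$ is satisfied by the images $\varphi_A(x_1), \ldots, \varphi_A(x_n)$ inside $\mathcal{R}_\tau$. This is exactly the place where the specific hypotheses $\sigma(a) = A\tau(a)A^{-1}$ and $\delta_\sigma(a) = A\delta_\tau(a)$ are indispensable; once this single compatibility is secured, the rest is an organized bookkeeping exercise exploiting the universal recursive definition of $\varphi_A$ and $\mathbb{F}$-linearity from the left.
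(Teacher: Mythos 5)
Your proposal is correct and follows essentially the same route as the paper: additivity from the left-linear definition, then multiplicativity by recursion on monomials, with the crucial case $\mathfrak{m}=x_i$ handled by expanding $\mathbf{x}G$ via the commutation rule and using $A\tau(a)A^{-1}=\sigma(a)$ and $A\delta_\tau(a)=\delta_\sigma(a)$. Your only departure is to isolate the identity $(A\mathbf{x})a=\sigma(a)(A\mathbf{x})+\delta_\sigma(a)$ as a standalone preliminary step, which the paper instead carries out inline; this is a presentational difference, not a different argument.
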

\begin{proof}
The fact that $ \varphi_A $ is additive (or even left linear over $ \mathbb{F} $) follows directly from the definitions.

Now, due to (\ref{eq def by linearity of phi A}), in order to prove that $ \varphi_A $ is multiplicative, we only need to show that
$$ \varphi_A(\mathfrak{m}(\mathbf{x}) G(\mathbf{x})) = \varphi_A(\mathfrak{m}(\mathbf{x})) \varphi_A(G(\mathbf{x})), $$
for all $ \mathfrak{m}(\mathbf{x}) \in \mathcal{M} $ and all $ G(\mathbf{x}) \in \mathbb{F}[\mathbf{x}; \sigma, \delta_\sigma] $. We do this recursively on $ \mathfrak{m}(\mathbf{x}) \in \mathcal{M} $.

First observe that it is trivial for $ \mathfrak{m}(\mathbf{x}) = 1 $. To perform the induction step, we need to prove the result for $ \mathfrak{m}(\mathbf{x}) = x_i $, for $ i = 1,2, \ldots, n $. Denote $ G(\mathbf{x}) = \sum_{\mathfrak{m}(\mathbf{x}) \in \mathcal{M}} G_\mathfrak{m} \mathfrak{m}(\mathbf{x}) $, where $ G_\mathfrak{m} \in \mathbb{F} $, for all $ \mathfrak{m}(\mathbf{x}) \in \mathcal{M} $. First it holds that
$$ \mathbf{x} G(\mathbf{x}) = \sum_{\mathfrak{m}(\mathbf{x}) \in \mathcal{M}} \mathbf{x}(G_\mathfrak{m} \mathfrak{m}(\mathbf{x})) = \sum_{\mathfrak{m}(\mathbf{x}) \in \mathcal{M}} (\sigma(G_\mathfrak{m}) \mathbf{x} \mathfrak{m}(\mathbf{x}) + \delta_\sigma(G_\mathfrak{m}) \mathfrak{m}(\mathbf{x}) ), $$
since these operations are in the ring $ \mathbb{F}[\mathbf{x}; \sigma, \delta_\sigma] $. Therefore, by definition, it holds that
$$ \varphi_A( \mathbf{x} G(\mathbf{x})) = \sum_{\mathfrak{m}(\mathbf{x}) \in \mathcal{M}} (\sigma(G_\mathfrak{m}) (A \mathbf{x} \mathfrak{m}(A\mathbf{x})) + \delta_\sigma(G_\mathfrak{m}) \mathfrak{m}(A\mathbf{x}) ). $$
Next, working in the ring $ \mathbb{F}[\mathbf{x}; \tau, \delta_\tau] $, we see that
$$ \varphi_A( \mathbf{x}) \varphi_A(G(\mathbf{x})) = (A \mathbf{x}) G(A \mathbf{x}) = A \sum_{\mathfrak{m}(\mathbf{x}) \in \mathcal{M}} (\tau(G_\mathfrak{m}) \mathbf{x} \mathfrak{m}(A\mathbf{x}) + \delta_\tau(G_\mathfrak{m}) \mathfrak{m}(A\mathbf{x}) ) = $$
$$ \sum_{\mathfrak{m}(\mathbf{x}) \in \mathcal{M}} (A\tau(G_\mathfrak{m})A^{-1} (A \mathbf{x} \mathfrak{m}(A\mathbf{x})) + A\delta_\tau(G_\mathfrak{m}) \mathfrak{m}(A\mathbf{x}) ) = $$
$$ \sum_{\mathfrak{m}(\mathbf{x}) \in \mathcal{M}} (\sigma(G_\mathfrak{m}) (A \mathbf{x} \mathfrak{m}(A\mathbf{x})) + \delta_\sigma(G_\mathfrak{m}) \mathfrak{m}(A\mathbf{x}) ). $$
Thus we deduce that
\begin{equation}
\varphi_A( \mathbf{x} G(\mathbf{x})) = \varphi_A( \mathbf{x}) \varphi_A(G(\mathbf{x})),
\label{eq proof multiplicative for x}
\end{equation}
where operations on the right-hand side are in the ring $ \mathbb{F}[\mathbf{x}; \tau, \delta_\tau] $.

Finally, assume that the result holds for a given $ \mathfrak{m}(\mathbf{x}) \in \mathcal{M} $, and we will prove it for $ x_i \mathfrak{m}(\mathbf{x}) $, for $ i = 1,2, \ldots, n $. It follows from (\ref{eq proof multiplicative for x}) and the hypothesis on $ \mathfrak{m}(\mathbf{x}) $ that
$$ \varphi_A(\mathbf{x} (\mathfrak{m}(\mathbf{x}) G(\mathbf{x})) ) = (A \mathbf{x}) (\mathfrak{m}(A\mathbf{x}) G(A \mathbf{x})) = $$
$$ ((A \mathbf{x}) \mathfrak{m}(A\mathbf{x})) G(A\mathbf{x}) = \varphi_A(\mathbf{x} (\mathfrak{m}(\mathbf{x})) \varphi_A(G(\mathbf{x})) ), $$
and we are done.
\end{proof}

\subsection{The map $ \varphi_A $ preserves evaluations}

In this subsection, we show that the $ (\tau, \delta_\tau) $-evaluation of $ F(A\mathbf{x}) \in \mathbb{F}[\mathbf{x}; \tau, \delta_\tau] $ at an affine point $ \mathbf{a} \in \mathbb{F}^n $ coincides with the $ (\sigma,\delta_\sigma) $-evaluation of $ F(\mathbf{x}) \in \mathbb{F}[\mathbf{x}; \sigma, \delta_\sigma] $ at the affine point $ A \mathbf{a} \in \mathbb{F}^n $. See (\ref{eq evaluation}) in Section \ref{sec preliminaries} for the definition of evaluation.

\begin{proposition} \label{prop varphi preserves evaluations}
Let $ \sigma, \tau : \mathbb{F} \longrightarrow \mathbb{F}^{n \times n} $ be ring morphisms and let $ \delta_\sigma, \delta_\tau : \mathbb{F} \longrightarrow \mathbb{F}^n $ be a $ \sigma $-derivation and a $ \tau $-derivation, respectively, as in Proposition \ref{prop conjugation of morphism and derivation}, for an invertible matrix $ A \in \mathbb{F}^{n \times n} $. The map $ \varphi_A : \mathbb{F}[\mathbf{x}; \sigma, \delta_\sigma] \longrightarrow \mathbb{F}[\mathbf{x}; \tau, \delta_\tau] $ in Definition \ref{def map phi A} preserves evaluations over any point $ \mathbf{a} \in \mathbb{F}^n $ after multiplication with the matrix $ A $. More concretely, according to the definition of evaluation in (\ref{eq evaluation}), it holds that
\begin{equation}
E_{\mathbf{a}}^{\tau, \delta_\tau}(F(A \mathbf{x})) = E_{A \mathbf{a}}^{\sigma, \delta_\sigma}(F(\mathbf{x})),
\label{eq varphi preserves evalautions}
\end{equation}
for all $ F(\mathbf{x}) \in \mathbb{F}[\mathbf{x}; \sigma, \delta_\sigma] $ and all $ \mathbf{a} \in \mathbb{F}^n $. In other words, the evaluation of $ F(A \mathbf{x}) $ in the point $ \mathbf{a} $ is $ F(A \mathbf{a}) $.
\end{proposition}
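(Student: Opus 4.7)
The plan is to reduce the statement to the uniqueness of right-remainders established in \cite[Lemma 5]{multivariateskew}. Recall that, by definition of the evaluation map, the scalar $b = E_{A\mathbf{a}}^{\sigma,\delta_\sigma}(F(\mathbf{x})) \in \mathbb{F}$ is characterized as the unique element of $\mathbb{F}$ for which there exist $G_1(\mathbf{x}), \ldots, G_n(\mathbf{x}) \in \mathbb{F}[\mathbf{x}; \sigma, \delta_\sigma]$ with
$$ F(\mathbf{x}) = \sum_{i=1}^n G_i(\mathbf{x})(x_i - (A\mathbf{a})_i) + b. $$
The strategy is to push this identity through $\varphi_A$ and show that the resulting equality in $\mathbb{F}[\mathbf{x}; \tau, \delta_\tau]$ has exactly the shape needed to identify $b$ with $E_{\mathbf{a}}^{\tau,\delta_\tau}(F(A\mathbf{x}))$, again by uniqueness.

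The first step is to apply $\varphi_A$ to the above decomposition and use that $\varphi_A$ is a ring morphism (Proposition \ref{prop phi A is ring morphism}), together with the fact that $\varphi_A$ fixes $\mathbb{F}$ (since $\varphi_A(1) = 1$ and $\varphi_A$ is left $\mathbb{F}$-linear by construction). Writing $A = (a_{i,j})$, the key computation is
$$ \varphi_A(x_i - (A\mathbf{a})_i) = \sum_{j=1}^n a_{i,j} x_j - \sum_{j=1}^n a_{i,j} a_j = \sum_{j=1}^n a_{i,j}(x_j - a_j), $$
which is valid in $\mathbb{F}[\mathbf{x}; \tau, \delta_\tau]$. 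Substituting into the image of the decomposition and collecting terms by swapping the order of summation yields
$$ F(A\mathbf{x}) = \varphi_A(F(\mathbf{x})) = \sum_{j=1}^n H_j(\mathbf{x}) (x_j - a_j) + b, \qquad H_j(\mathbf{x}) = \sum_{i=1}^n \varphi_A(G_i(\mathbf{x})) \, a_{i,j}, $$
where each $H_j(\mathbf{x}) \in \mathbb{F}[\mathbf{x}; \tau, \delta_\tau]$ because right-multiplication by the constants $a_{i,j} \in \mathbb{F}$ preserves the polynomial structure.

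The final step is to invoke the uniqueness part of \cite[Lemma 5]{multivariateskew} applied to the ring $\mathbb{F}[\mathbf{x}; \tau, \delta_\tau]$ and the point $\mathbf{a} \in \mathbb{F}^n$: the display above shows that $b$ is precisely the unique constant remainder in the right-division of $F(A\mathbf{x})$ by $x_1 - a_1, \ldots, x_n - a_n$, i.e., $b = E_{\mathbf{a}}^{\tau,\delta_\tau}(F(A\mathbf{x}))$. Combined with the opening identity $b = E_{A\mathbf{a}}^{\sigma,\delta_\sigma}(F(\mathbf{x}))$, this gives exactly (\ref{eq varphi preserves evalautions}).

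There is no real obstacle in this argument; the only points deserving care are the non-commutativity of the ambient rings and the distinction between the two products appearing in the computation. In particular, one must check that each $H_j(\mathbf{x})$ lies in $\mathbb{F}[\mathbf{x}; \tau, \delta_\tau]$ (it does, as a sum of right-products of a polynomial by a scalar) and that when expanding the image of $\sum_i G_i(\mathbf{x})(x_i - (A\mathbf{a})_i)$ one does not need to commute the scalars $a_{i,j}$ past the polynomials $\varphi_A(G_i(\mathbf{x}))$: the grouping $\varphi_A(G_i(\mathbf{x})) \cdot a_{i,j}(x_j - a_j)$ is already a legal product in the target ring, so associativity suffices.
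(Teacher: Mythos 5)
Your proof is correct and follows essentially the same route as the paper's: apply $\varphi_A$ to the right-division decomposition of $F(\mathbf{x})$ at the point $A\mathbf{a}$, regroup the linear factors (the paper writes your componentwise rearrangement compactly as $\mathbf{G}(A\mathbf{x})^T\cdot(A\mathbf{x}-A\mathbf{a}) = (\mathbf{G}(A\mathbf{x})^T A)\cdot(\mathbf{x}-\mathbf{a})$), and conclude by uniqueness of the remainder. Your explicit attention to where associativity, left $\mathbb{F}$-linearity of $\varphi_A$, and non-commutativity enter is a welcome addition but does not change the argument.
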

\begin{proof}
Fix $ F(\mathbf{x}) \in \mathbb{F}[\mathbf{x}; \sigma, \delta_\sigma] $ and $ \mathbf{a} \in \mathbb{F}^n $, and denote $ \mathbf{b} = (b_1, b_2, \ldots, b_n) = A \mathbf{a} \in \mathbb{F}^n $. By the results in Section \ref{sec preliminaries}, there exist $ G_1(\mathbf{x}), G_2(\mathbf{x}), \ldots, $ $ G_n(\mathbf{x}) \in \mathbb{F}[\mathbf{x}; \sigma, \delta_\sigma] $ such that
$$ F(\mathbf{x}) = \sum_{i = 1}^n G_i(\mathbf{x}) (x_i - b_i) + F(A \mathbf{a}) = \mathbf{G}(\mathbf{x})^T \cdot (\mathbf{x} - \mathbf{b}) + F(A \mathbf{a}), $$
where $ \mathbf{G}(\mathbf{x}) \in \mathbb{F}[\mathbf{x}; \sigma, \delta_\sigma]^n $ is the column vector of length $ n $ whose $ i $th row is $ G_i(\mathbf{x}) $, for $ i = 1,2, \ldots, n $. Now, using that $ \varphi_A $ is a ring morphism (Proposition \ref{prop phi A is ring morphism}), we deduce that
$$ F(A \mathbf{x}) = \mathbf{G}(A \mathbf{x})^T \cdot (A \mathbf{x} - A \mathbf{a}) + F(A \mathbf{a}) = (\mathbf{G}(A \mathbf{x})^T A) \cdot (\mathbf{x} - \mathbf{a}) + F(A \mathbf{a}), $$
and the result follows by the uniqueness of the remainder $ F(A \mathbf{a}) \in \mathbb{F} $ (see Section \ref{sec preliminaries}).
\end{proof}

Thanks to this result, we may extend $ \varphi_A $ to non-free multivariate skew polynomial rings. We follow the definitions in Section \ref{sec preliminaries}. However, for clarity, we denote by $ I^{\tau, \delta_\tau}(\mathbb{F}^n) $ and $ I^{\sigma, \delta_\sigma}(\mathbb{F}^n) $ the two-sided ideals of $ \mathbb{F}[\mathbf{x}; \tau, \delta_\tau] $ and $ \mathbb{F}[\mathbf{x}; \sigma, \delta_\sigma] $, respectively, given by skew polynomials that vanish at every point. We have the following consequence of Proposition \ref{prop varphi preserves evaluations}.

\begin{corollary} \label{cor extend varphi to non-free}
Let $ \sigma, \tau : \mathbb{F} \longrightarrow \mathbb{F}^{n \times n} $ be ring morphisms and let $ \delta_\sigma, \delta_\tau : \mathbb{F} \longrightarrow \mathbb{F}^n $ be a $ \sigma $-derivation and a $ \tau $-derivation, respectively, as in Proposition \ref{prop conjugation of morphism and derivation}, for an invertible matrix $ A \in \mathbb{F}^{n \times n} $. It holds that
$$ \varphi_A \left( I^{\sigma, \delta_\sigma}(\mathbb{F}^n) \right) = I^{\tau, \delta_\tau}(\mathbb{F}^n). $$
Therefore, we may extend $ \varphi_A $ to a ring morphism between non-free multivariate skew polynomial rings, that is, to ring morphisms
\begin{equation}
\varphi_A : \mathbb{F}[\mathbf{x}; \sigma, \delta_\sigma]/I \longrightarrow \mathbb{F}[\mathbf{x}; \tau, \delta_\tau]/J,
\label{eq varphi extension to non-free}
\end{equation}
for any two-sided ideal $ I \subseteq I^{\sigma, \delta_\sigma}(\mathbb{F}^n) $, where $ J = \varphi_A(I) \subseteq I^{\tau, \delta_\tau}(\mathbb{F}^n) $ is also a two-sided ideal. Furthermore, evaluations are also preserved by $ \varphi_A $ as in (\ref{eq varphi preserves evalautions}) over non-free multivariate skew polynomial rings.
\end{corollary}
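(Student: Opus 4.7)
The plan is to bootstrap everything from Proposition \ref{prop varphi preserves evaluations}. The main equality $\varphi_A(I^{\sigma,\delta_\sigma}(\mathbb{F}^n)) = I^{\tau,\delta_\tau}(\mathbb{F}^n)$ is the heart of the statement; once it is proved, the rest is just passing to quotients.

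For the forward inclusion, I would fix any $F(\mathbf{x}) \in I^{\sigma,\delta_\sigma}(\mathbb{F}^n)$ and any $\mathbf{a} \in \mathbb{F}^n$. By Proposition \ref{prop varphi preserves evaluations}, $E_{\mathbf{a}}^{\tau,\delta_\tau}(\varphi_A(F(\mathbf{x}))) = E_{A\mathbf{a}}^{\sigma,\delta_\sigma}(F(\mathbf{x})) = 0$, since $A\mathbf{a} \in \mathbb{F}^n$ and $F$ vanishes at every point. Hence $\varphi_A(F) \in I^{\tau,\delta_\tau}(\mathbb{F}^n)$, giving $\varphi_A(I^{\sigma,\delta_\sigma}(\mathbb{F}^n)) \subseteq I^{\tau,\delta_\tau}(\mathbb{F}^n)$. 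For the reverse inclusion, I would invoke the symmetry of the construction: setting $B = A^{-1}$ and swapping the roles of $(\sigma,\delta_\sigma)$ and $(\tau,\delta_\tau)$ in Proposition \ref{prop conjugation of morphism and derivation} (note that $\tau(a) = A^{-1}\sigma(a)A$ and $\delta_\tau(a) = A^{-1}\delta_\sigma(a)$), Definition \ref{def map phi A} produces $\varphi_B : \mathbb{F}[\mathbf{x};\tau,\delta_\tau] \longrightarrow \mathbb{F}[\mathbf{x};\sigma,\delta_\sigma]$. The compositions $\varphi_A \circ \varphi_B$ and $\varphi_B \circ \varphi_A$ are ring morphisms (by Proposition \ref{prop phi A is ring morphism}) that fix each variable $x_i$ (because $AA^{-1} = A^{-1}A = I$ act on $\mathbf{x}$ componentwise through the recursive definition), so they equal the identity on the corresponding free rings. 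Thus $\varphi_A$ is a ring isomorphism with inverse $\varphi_B$. Applying the forward-inclusion argument to $\varphi_B$ yields $\varphi_B(I^{\tau,\delta_\tau}(\mathbb{F}^n)) \subseteq I^{\sigma,\delta_\sigma}(\mathbb{F}^n)$, and then applying $\varphi_A$ to both sides gives the reverse inclusion, completing the equality.

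Once the ideal equality is in hand, the extension is routine. Given a two-sided ideal $I \subseteq I^{\sigma,\delta_\sigma}(\mathbb{F}^n)$, its image $J = \varphi_A(I)$ is a two-sided ideal of $\mathbb{F}[\mathbf{x};\tau,\delta_\tau]$ (since $\varphi_A$ is a ring isomorphism), and $J \subseteq \varphi_A(I^{\sigma,\delta_\sigma}(\mathbb{F}^n)) = I^{\tau,\delta_\tau}(\mathbb{F}^n)$. The quotient map $[F(\mathbf{x})] \mapsto [\varphi_A(F(\mathbf{x}))]$ is well-defined because $\varphi_A(I) = J$, and it inherits the ring morphism structure from $\varphi_A$, giving the map in (\ref{eq varphi extension to non-free}). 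Evaluations are well-defined on both quotients precisely because $I \subseteq I^{\sigma,\delta_\sigma}(\mathbb{F}^n)$ and $J \subseteq I^{\tau,\delta_\tau}(\mathbb{F}^n)$, so (\ref{eq varphi preserves evalautions}) carries over verbatim to the quotients.

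The only genuinely non-trivial point is the reverse inclusion of the ideal equality; the temptation is to try to argue it directly, but routing through the inverse $\varphi_{A^{-1}}$ and using the symmetric form of Proposition \ref{prop conjugation of morphism and derivation} makes this essentially free of new content. All other steps are formal consequences of $\varphi_A$ being a ring isomorphism that intertwines evaluations.
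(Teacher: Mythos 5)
Your proof is correct and follows the route the paper intends: the forward inclusion is immediate from Proposition \ref{prop varphi preserves evaluations}, the reverse inclusion comes from applying the same argument to $\varphi_{A^{-1}}$, and the passage to quotients is formal. The one fact you use that the paper only establishes later (Corollary \ref{cor varphi is isomorphism}) is that $\varphi_{A^{-1}}$ inverts $\varphi_A$, but your self-contained argument for it (a left $\mathbb{F}$-linear ring morphism fixing $1$ and each $x_i$ must be the identity on the free ring) is valid.
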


We conclude with a remark on \textit{conjugacy} and the so-called product rule. Two affine points $ \mathbf{a}, \mathbf{b} \in \mathbb{F}^n $ are said to be $ (\tau,\delta_\tau) $-conjugate if there exists $ c \in \mathbb{F}^* $ such that
\begin{equation}
\mathbf{b} = \tau(c) \mathbf{a} c^{-1} + \delta_\tau(c) c^{-1}.
\label{eq conjugacion}
\end{equation}
This concept was introduced in \cite[Eq. (2.5)]{lam-leroy} when $ n = 1 $, and in \cite[Def. 11]{multivariateskew} in the general case. With notation as in Proposition \ref{prop conjugation of morphism and derivation}, it is easy to check that $ \mathbf{a} $ and $ \mathbf{b} $ are $ (\tau, \delta_\tau) $-conjugate if, and only if, $ A \mathbf{a} $ and $ A \mathbf{b} $ are $ (\sigma, \delta_\sigma) $-conjugate, with the same element $ c \in \mathbb{F}^* $ in the conjugacy relation (\ref{eq conjugacion}) in both cases.

The conjugacy relation allows to relate evaluations and products through the product rule, which was given in \cite[Th. 2.7]{lam-leroy} when $ n = 1 $, and in \cite[Th. 3]{multivariateskew} in the general case. This result is as follows. Let $ F(\mathbf{x}), G(\mathbf{x}) \in \mathbb{F}[\mathbf{x}; \tau, \delta_\tau] $, let $ \mathbf{a} \in \mathbb{F}^n $, and let $ c = G(\mathbf{a}) $. If $ c = 0 $, then $ (FG)(\mathbf{a}) = E^{\tau, \delta_\tau}_\mathbf{a}((FG)(\mathbf{x})) = 0 $, and if $ c \neq 0 $, then
$$ (FG)(\mathbf{a}) = E^{\tau, \delta_\tau}_\mathbf{a}((FG)(\mathbf{x})) = E^{\tau, \delta_\tau}_\mathbf{b}(F(\mathbf{x})) E^{\tau, \delta_\tau}_\mathbf{a}(G(\mathbf{x})) = F(\mathbf{b}) G(\mathbf{a}), $$
where $ \mathbf{a} $ and $ \mathbf{b} $ are $ (\tau, \delta_\tau) $-conjugate with $ c $ in the conjugacy relation (\ref{eq conjugacion}).

Therefore, Propositions \ref{prop phi A is ring morphism} and \ref{prop varphi preserves evaluations} are consistent with the product rule, since the following identities, which need to hold, actually hold:
$$ E^{\sigma,\delta_\sigma}_{A \mathbf{b}}(F(\mathbf{x})) E^{\sigma, \delta_\sigma}_{A \mathbf{a}}(G(\mathbf{x})) = E^{\sigma, \delta_\sigma}_{A\mathbf{a}}((FG)(\mathbf{x})) = E^{\tau,\delta_\tau}_{\mathbf{a}}((FG)(A \mathbf{x})) = $$
$$ E^{\tau,\delta_\tau}_{\mathbf{a}}(F(A\mathbf{x}) G(A\mathbf{x})) = E^{\tau,\delta_\tau}_{\mathbf{b}}(F(A\mathbf{x})) E^{\tau,\delta_\tau}_{\mathbf{a}}(G(A\mathbf{x})), $$
assuming that $ c = E^{\sigma, \delta_\sigma}_{A \mathbf{a}}(G(\mathbf{x})) = E^{\tau, \delta_\tau}_{\mathbf{a}}(G(A\mathbf{x})) \neq 0 $, where $ \mathbf{a} $ and $ \mathbf{b} $ are $ (\tau, \delta_\tau) $-conjugate, thus $ A\mathbf{a} $ and $ A \mathbf{b} $ are $ (\sigma, \delta_\sigma) $-conjugate, both using $ c $ in the conjugacy relation. Similarly if $ c = 0 $.

\subsection{The inverse of the map $ \varphi_A $}

In this subsection, we prove that the ring morphism $ \varphi_A $ has $ \varphi_{A^{-1}} $ as its inverse, and thus it is a ring isomorphism. The main result is the following.

\begin{proposition} \label{prop inverse of phi A}
Let $ \sigma, \tau, \upsilon : \mathbb{F} \longrightarrow \mathbb{F}^{n \times n} $ be ring morphisms and let $ \delta_\sigma, \delta_\tau, \delta_\upsilon : \mathbb{F} \longrightarrow \mathbb{F}^n $ be a $ \sigma $-derivation, a $ \tau $-derivation and a $ \upsilon $-derivation, respectively, where
$$ \sigma (a) = A \tau (a) A^{-1} \quad \textrm{and} \quad \delta_\sigma(a) = A \delta_\tau(a), $$
$$ \tau (a) = B \upsilon (a) B^{-1} \quad \textrm{and} \quad \delta_\tau(a) = B \delta_\upsilon(a), $$
for all $ a \in \mathbb{F} $, for invertible matrices $ A, B \in \mathbb{F}^{n \times n} $. It holds that
$$ \varphi_B \circ \varphi_A = \varphi_{AB}. $$
In other words, for any $ F(\mathbf{x}) \in \mathbb{F}[\mathbf{x}; \sigma, \delta_\sigma] $ and using notation as in (\ref{eq notation linear transform}), it holds that
$$ F(A(B \mathbf{x})) = F((AB)\mathbf{x}) $$
in the ring $ \mathbb{F}[\mathbf{x}; \upsilon, \delta_\upsilon] $. 
\end{proposition}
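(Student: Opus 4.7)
The plan is to observe that both sides of the claimed identity, namely $\varphi_B \circ \varphi_A$ and $\varphi_{AB}$, are ring morphisms from $\mathbb{F}[\mathbf{x}; \sigma, \delta_\sigma]$ to $\mathbb{F}[\mathbf{x}; \upsilon, \delta_\upsilon]$, and then to check that they coincide on a generating set. First I would verify the compatibility hypotheses needed for $\varphi_{AB}$ to even make sense: from $\sigma(a) = A\tau(a)A^{-1}$ and $\tau(a) = B\upsilon(a)B^{-1}$ one obtains $\sigma(a) = (AB)\upsilon(a)(AB)^{-1}$, and similarly $\delta_\sigma(a) = A\delta_\tau(a) = (AB)\delta_\upsilon(a)$, so the hypotheses of Proposition \ref{prop conjugation of morphism and derivation} hold for the pair $(\sigma,\upsilon)$ with conjugating matrix $AB$, and hence $\varphi_{AB}$ is a well-defined ring morphism by Proposition \ref{prop phi A is ring morphism}. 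Likewise, $\varphi_B \circ \varphi_A$ is a composition of ring morphisms (each well-defined by the same proposition) and is therefore a ring morphism.

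Next I would use the fact that $\mathbb{F}[\mathbf{x}; \sigma, \delta_\sigma]$ is generated as a ring by $\mathbb{F} \cup \{x_1, x_2, \ldots, x_n\}$, so two ring morphisms agreeing on these generators must coincide everywhere. On $\mathbb{F}$ both maps act trivially: by left $\mathbb{F}$-linearity (immediate from (\ref{eq def by linearity of phi A})) and the fact that $\varphi_A(1)=1$, one has $\varphi_A(a) = a$ for all $a \in \mathbb{F}$, and analogously for $\varphi_B$ and $\varphi_{AB}$. On the variables, a direct matrix computation gives
\begin{equation*}
\varphi_B(\varphi_A(x_i)) = \varphi_B\!\left(\sum_{j=1}^n a_{i,j} x_j\right) = \sum_{j=1}^n a_{i,j}\sum_{k=1}^n b_{j,k} x_k = \sum_{k=1}^n (AB)_{i,k} x_k = \varphi_{AB}(x_i),
\end{equation*}
for $i = 1, 2, \ldots, n$. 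Since both ring morphisms agree on the generators, they agree on the whole ring, which is the desired identity.

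The only conceptual subtlety is confirming that the universal property argument (equality of two ring morphisms determined by their values on generators) is legitimate here. As a fallback in case the referee prefers an explicit argument, one could replace the last step by a routine induction on the degree of a monomial $\mathfrak{m}(\mathbf{x})$: the base case $\mathfrak{m} = 1$ is trivial, the case $\mathfrak{m} = x_i$ is the matrix computation above, and the inductive step for $x_i \mathfrak{m}(\mathbf{x})$ uses the multiplicativity of $\varphi_B$ (Proposition \ref{prop phi A is ring morphism}) together with the recursive definition in Definition \ref{def map phi A}. I do not expect any genuine obstacle; the result is essentially a functoriality/chain-rule statement, and the main care is just keeping track of which ring each operation takes place in, so that the multiplicativity of $\varphi_B$ can be invoked cleanly on the expression $A\mathbf{x} \cdot \varphi_A(\mathfrak{m}(\mathbf{x}))$.
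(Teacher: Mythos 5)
Your proposal is correct and is essentially the paper's argument in a slightly more abstract packaging: the paper proves the identity by explicit recursion on monomials (base case the matrix computation $\varphi_B(A\mathbf{x})=(AB)\mathbf{x}$, inductive step via the multiplicativity of $\varphi_B$ from Proposition \ref{prop phi A is ring morphism}), which is exactly the content of your ``two ring morphisms agreeing on $\mathbb{F}\cup\{x_1,\ldots,x_n\}$ coincide'' principle, and your fallback induction is the paper's proof verbatim. Your additional check that $\varphi_{AB}$ is well defined (i.e., $\sigma(a)=(AB)\upsilon(a)(AB)^{-1}$ and $\delta_\sigma(a)=(AB)\delta_\upsilon(a)$) is a worthwhile point that the paper leaves implicit.
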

\begin{proof}
By the linearity of the maps $ \varphi_A $ and $ \varphi_B $, we only need to prove this result for $ F(\mathbf{x}) = \mathfrak{m}(\mathbf{x}) $, for $ \mathfrak{m}(\mathbf{x}) \in \mathcal{M} $. We do this recursively on $ \mathfrak{m}(\mathbf{x}) \in \mathcal{M} $.

First, the result is trivial for $ \mathfrak{m}(\mathbf{x}) = 1 $. Next, we have that
\begin{equation}
\varphi_B(\varphi_A(\mathbf{x})) = \varphi_B(A \mathbf{x}) = A(B\mathbf{x}) = (AB)\mathbf{x} = \varphi_{AB}(\mathbf{x}),
\label{eq proof evaluation for x}
\end{equation}
thus the result holds for $ \mathfrak{m}(\mathbf{x}) = x_i $, for $ i = 1,2, \ldots, n $. Finally, assume that the result holds for a given $ \mathfrak{m}(\mathbf{x}) \in \mathcal{M} $, and we will prove it for $ x_i \mathfrak{m}(\mathbf{x}) $, for $ i = 1,2, \ldots, n $. Let $ \mathbf{F}(\mathbf{x}) = \mathbf{x} \mathfrak{m}(\mathbf{x}) \in \mathbb{F}[\mathbf{x}; \sigma, \delta_\sigma]^n $ and denote $ \mathbf{G}(\mathbf{x}) = \mathbf{F}(A \mathbf{x}) \in \mathbb{F}[\mathbf{x}; \tau, \delta_\tau]^n $. By Definition \ref{def map phi A}, $ \mathbf{G}(\mathbf{x}) = (A \mathbf{x}) \mathfrak{m}(A \mathbf{x}) $. By (\ref{eq proof evaluation for x}) and the hypothesis on $ \mathfrak{m}(\mathbf{x}) $, we conclude that
$$ \varphi_B(\varphi_A(\mathbf{F}(\mathbf{x}))) = \mathbf{G}(B \mathbf{x}) = A((B \mathbf{x}) \mathfrak{m}((AB)\mathbf{x})) $$
$$ = ((AB) \mathbf{x}) \mathfrak{m}((AB)\mathbf{x}) = \mathbf{F}((AB)\mathbf{x}) = \varphi_{AB}(\mathbf{F}(\mathbf{x})), $$
and we are done.
\end{proof}

Moreover, we have the following obvious fact.

\begin{proposition}
Given a ring morphism $ \sigma : \mathbb{F} \longrightarrow \mathbb{F}^{n \times n} $ and a $ \sigma $-derivation $ \delta : \mathbb{F} \longrightarrow \mathbb{F}^n $, it holds that
$$ \varphi_I : \mathbb{F} [\mathbf{x} ; \sigma,\delta] \longrightarrow \mathbb{F} [\mathbf{x} ; \sigma,\delta] $$
is the identity morphism, where $ I \in \mathbb{F}^{n \times n} $ is the identity matrix.
\end{proposition}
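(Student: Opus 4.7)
The proof is essentially a direct verification that the recursive definition of $\varphi_A$ reduces to the identity when $A = I$. My plan is to unwind Definition \ref{def map phi A} in the special case $A = I$ and check that each step produces the same polynomial it started with.

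First I would record the base cases directly from the definition: $\varphi_I(1) = 1$, and since $I\mathbf{x} = \mathbf{x}$, we get $\varphi_I(x_i) = x_i$ for every $i = 1, 2, \ldots, n$. These are immediate from the formulas
$$ \varphi_I(\mathbf{x}) = I \mathbf{x} = \mathbf{x}. $$

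Next I would run induction on the degree of a monomial $\mathfrak{m}(\mathbf{x}) \in \mathcal{M}$. Assume $\varphi_I(\mathfrak{m}(\mathbf{x})) = \mathfrak{m}(\mathbf{x})$ for some $\mathfrak{m}(\mathbf{x}) \in \mathcal{M}$. Then the recursive part of the definition yields
$$ \varphi_I(\mathbf{x} \mathfrak{m}(\mathbf{x})) = I \mathbf{x} \, \varphi_I(\mathfrak{m}(\mathbf{x})) = \mathbf{x} \mathfrak{m}(\mathbf{x}), $$
so in particular $\varphi_I(x_i \mathfrak{m}(\mathbf{x})) = x_i \mathfrak{m}(\mathbf{x})$ for each $i = 1, 2, \ldots, n$. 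By induction on length, $\varphi_I$ fixes every monomial in $\mathcal{M}$.

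Finally, for a general $F(\mathbf{x}) = \sum_{\mathfrak{m}(\mathbf{x}) \in \mathcal{M}} F_\mathfrak{m} \mathfrak{m}(\mathbf{x}) \in \mathbb{F}[\mathbf{x}; \sigma, \delta]$, equation (\ref{eq def by linearity of phi A}) in Definition \ref{def map phi A} gives
$$ \varphi_I(F(\mathbf{x})) = \sum_{\mathfrak{m}(\mathbf{x}) \in \mathcal{M}} F_\mathfrak{m} \varphi_I(\mathfrak{m}(\mathbf{x})) = \sum_{\mathfrak{m}(\mathbf{x}) \in \mathcal{M}} F_\mathfrak{m} \mathfrak{m}(\mathbf{x}) = F(\mathbf{x}), $$
so $\varphi_I$ is the identity. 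There is no real obstacle here; the only thing to be slightly careful about is that the recursion is on monomial length and not on polynomial degree, so the induction step must be stated for $\mathbf{x}\mathfrak{m}(\mathbf{x})$ (as in the definition) rather than for an arbitrary product of two polynomials.
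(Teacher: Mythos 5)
Your proof is correct: the paper states this proposition as an obvious fact and omits the proof entirely, and your direct unwinding of Definition \ref{def map phi A} with $A = I$ (base cases, induction on monomial length, then left $\mathbb{F}$-linearity) is exactly the verification the paper is implicitly relying on. Your closing remark about inducting on monomial length via $\mathbf{x}\mathfrak{m}(\mathbf{x})$ rather than on products of arbitrary polynomials is the right point of care.
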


Therefore, we conclude that $ \varphi_A $ is a ring isomorphism with inverse given by $ \varphi_{A^{-1}} $.

\begin{corollary} \label{cor varphi is isomorphism}
Let $ \sigma, \tau : \mathbb{F} \longrightarrow \mathbb{F}^{n \times n} $ be ring morphisms and let $ \delta_\sigma, \delta_\tau : \mathbb{F} \longrightarrow \mathbb{F}^n $ be a $ \sigma $-derivation and a $ \tau $-derivation, respectively, as in Proposition \ref{prop conjugation of morphism and derivation}, for an invertible matrix $ A \in \mathbb{F}^{n \times n} $. Observe that 
$$ \tau (a) = A^{-1} \sigma (a) A \quad \textrm{and} \quad \delta_\tau(a) = A^{-1} \delta_\sigma(a), $$
for all $ a \in \mathbb{F} $. We conclude that
$$ \varphi_A \circ \varphi_{A^{-1}} = \varphi_{A^{-1}} \circ \varphi_A = \varphi_I = {\rm Id}. $$
That is, $ \varphi_A^{-1} = \varphi_{A^{-1}} $, and $ \varphi_A $ is a ring isomorphism.
\end{corollary}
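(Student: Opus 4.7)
The plan is to assemble the corollary directly from the two preceding propositions, with essentially no new computation. The statement has two parts: verifying the auxiliary relations $\tau(a) = A^{-1}\sigma(a)A$ and $\delta_\tau(a) = A^{-1}\delta_\sigma(a)$, and then showing $\varphi_A \circ \varphi_{A^{-1}} = \varphi_{A^{-1}} \circ \varphi_A = \mathrm{Id}$.

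First I would note the two auxiliary relations. Multiplying $\sigma(a) = A\tau(a) A^{-1}$ on the left by $A^{-1}$ and on the right by $A$ yields $\tau(a) = A^{-1}\sigma(a) A$, and multiplying $\delta_\sigma(a) = A \delta_\tau(a)$ on the left by $A^{-1}$ yields $\delta_\tau(a) = A^{-1}\delta_\sigma(a)$. These ensure that the symmetric hypotheses required to invoke Proposition \ref{prop inverse of phi A} with the roles of $(\sigma, \delta_\sigma)$ and $(\tau, \delta_\tau)$ interchanged are satisfied. In particular, $\varphi_{A^{-1}} : \mathbb{F}[\mathbf{x}; \tau, \delta_\tau] \longrightarrow \mathbb{F}[\mathbf{x}; \sigma, \delta_\sigma]$ is a well-defined map in the sense of Definition \ref{def map phi A}.

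Next I would apply Proposition \ref{prop inverse of phi A} twice. For the composition $\varphi_{A^{-1}} \circ \varphi_A$, I use that proposition with matrices $A$ and $B = A^{-1}$, and with the identification $\upsilon = \sigma$, $\delta_\upsilon = \delta_\sigma$; the required hypothesis $\tau = B \upsilon B^{-1} = A^{-1}\sigma A$ is exactly the first relation established above (and similarly for the derivation). This yields $\varphi_{A^{-1}} \circ \varphi_A = \varphi_{A \cdot A^{-1}} = \varphi_I$. For the opposite composition $\varphi_A \circ \varphi_{A^{-1}}$, I apply the proposition symmetrically: take the ``first'' morphism to be $\varphi_{A^{-1}}$ (so the matrix ``$A$'' of the proposition is $A^{-1}$ and the morphisms ``$\sigma$'' and ``$\tau$'' are our $\tau$ and $\sigma$), and the ``second'' morphism to be $\varphi_A$ (so the matrix ``$B$'' is $A$ and ``$\upsilon$'' is our $\tau$); all hypotheses again reduce to the relations in the corollary and its auxiliary form. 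This gives $\varphi_A \circ \varphi_{A^{-1}} = \varphi_{A^{-1} \cdot A} = \varphi_I$.

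Finally, I would invoke the obvious fact (stated immediately before the corollary) that $\varphi_I = \mathrm{Id}$, which completes the identity $\varphi_A \circ \varphi_{A^{-1}} = \varphi_{A^{-1}} \circ \varphi_A = \mathrm{Id}$. Combined with Proposition \ref{prop phi A is ring morphism} (which ensures $\varphi_A$ and $\varphi_{A^{-1}}$ are ring morphisms), this shows $\varphi_A$ is a ring isomorphism with inverse $\varphi_{A^{-1}}$. There is no real obstacle here, since the content is already encoded in the preceding results; the only care to take is the bookkeeping of roles of $(\sigma, \tau, \upsilon)$ and of the matrices when invoking Proposition \ref{prop inverse of phi A} in the second direction, to make sure all the conjugation and derivation compatibility conditions are verified before applying it.
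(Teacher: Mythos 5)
Your proposal is correct and follows exactly the paper's route: the corollary is a direct consequence of Proposition \ref{prop inverse of phi A} (applied twice, once with $B = A^{-1}$ and $\upsilon = \sigma$, and once with the roles of $\sigma$ and $\tau$ swapped) together with the fact that $\varphi_I = {\rm Id}$, and your bookkeeping of the conjugation and derivation hypotheses is accurate. Nothing is missing.
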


\begin{remark}
Observe that the results in this section hold automatically over non-free multivariate skew polynomial rings due to Corollary \ref{cor extend varphi to non-free}.
\end{remark}

\subsection{The map $ \varphi_A $ preserves degrees}

We conclude by showing that $ \varphi_A $ preserves degrees.

\begin{proposition} \label{prop varphi A preserves degrees}
Let $ \sigma, \tau : \mathbb{F} \longrightarrow \mathbb{F}^{n \times n} $ be ring morphisms and let $ \delta_\sigma, \delta_\tau : \mathbb{F} \longrightarrow \mathbb{F}^n $ be a $ \sigma $-derivation and a $ \tau $-derivation, respectively, as in Proposition \ref{prop conjugation of morphism and derivation}, for an invertible matrix $ A \in \mathbb{F}^{n \times n} $. It holds that
$$ \deg(\varphi_A(F(\mathbf{x}))) = \deg(F(\mathbf{x})), $$
for all $ F(\mathbf{x}) \in \mathbb{F}[\mathbf{x}; \sigma, \delta_\sigma] $.
\end{proposition}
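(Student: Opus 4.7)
The plan is to prove equality of degrees by establishing two inequalities: first, that $\deg(\varphi_A(F(\mathbf{x}))) \leq \deg(F(\mathbf{x}))$ by directly inspecting the image of $\varphi_A$ on monomials, and second, that the reverse inequality follows automatically by applying the same bound to the inverse map $\varphi_{A^{-1}}$ supplied by Corollary~\ref{cor varphi is isomorphism}. This second step is what lets me avoid any delicate analysis of whether the top-degree homogeneous parts could cancel after transformation.

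For the first inequality, I would use the recursive definition of $\varphi_A$ (Definition~\ref{def map phi A}). Each coordinate $(A\mathbf{x})_i = \sum_{j=1}^n a_{i,j} x_j$ is a degree-$1$ polynomial in $\mathbb{F}[\mathbf{x}; \tau, \delta_\tau]$, because the $i$th row of the invertible matrix $A$ is nonzero. Hence for any monomial $\mathfrak{m}(\mathbf{x}) = x_{i_1} x_{i_2} \cdots x_{i_d} \in \mathcal{M}$ of degree $d$, one obtains, by unfolding Definition~\ref{def map phi A}, the identity $\varphi_A(\mathfrak{m}(\mathbf{x})) = (A\mathbf{x})_{i_1} (A\mathbf{x})_{i_2} \cdots (A\mathbf{x})_{i_d}$, i.e.\ a product of $d$ polynomials each of degree $1$. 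Since the defining property of the multiplication in $\mathbb{F}[\mathbf{x}; \tau, \delta_\tau]$ recalled in Section~\ref{sec preliminaries} gives $\deg(PQ) = \deg(P) + \deg(Q)$, this product has degree exactly $d = \deg(\mathfrak{m}(\mathbf{x}))$. Writing $F(\mathbf{x}) = \sum_\mathfrak{m} F_\mathfrak{m} \mathfrak{m}(\mathbf{x})$ and extending by the $\mathbb{F}$-linearity of $\varphi_A$ then yields
\[
\deg(\varphi_A(F(\mathbf{x}))) \;\leq\; \max_{F_\mathfrak{m} \neq 0} \deg(\varphi_A(\mathfrak{m}(\mathbf{x}))) \;=\; \max_{F_\mathfrak{m} \neq 0} \deg(\mathfrak{m}(\mathbf{x})) \;=\; \deg(F(\mathbf{x})),
\]
with the case $F(\mathbf{x}) = 0$ being trivial under the convention $\deg(0) = \infty$.

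For the reverse inequality, I would invoke Corollary~\ref{cor varphi is isomorphism} to write $F(\mathbf{x}) = \varphi_{A^{-1}}(\varphi_A(F(\mathbf{x})))$, and then apply the inequality just proved to the map $\varphi_{A^{-1}} : \mathbb{F}[\mathbf{x}; \tau, \delta_\tau] \longrightarrow \mathbb{F}[\mathbf{x}; \sigma, \delta_\sigma]$ (which fits the same hypotheses with $A^{-1}$ in place of $A$). This gives $\deg(F(\mathbf{x})) \leq \deg(\varphi_A(F(\mathbf{x})))$, and combining the two bounds yields equality. The only step that might look like a genuine obstacle, namely the possibility that summing the images of several top-degree monomials produces cancellation and strictly lowers the degree, is bypassed entirely by this inverse-map argument; no direct reasoning about leading-term independence of the polynomials $\varphi_A(\mathfrak{m}(\mathbf{x}))$ is needed.
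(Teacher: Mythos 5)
Your proposal is correct and follows essentially the same route as the paper: establish $\deg(\varphi_A(F(\mathbf{x}))) \leq \deg(F(\mathbf{x}))$ directly from Definition~\ref{def map phi A} (the paper leaves this as ``easy to check,'' which you spell out via the factorization of $\varphi_A(\mathfrak{m}(\mathbf{x}))$ into degree-one factors), and then obtain the reverse inequality by applying the same bound to $\varphi_{A^{-1}}$ via Corollary~\ref{cor varphi is isomorphism}. No gaps.
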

\begin{proof}
From Definition \ref{def map phi A}, it is easy to check that $ \deg(\varphi_A(F(\mathbf{x}))) \leq \deg(F(\mathbf{x})) $, for all $ F(\mathbf{x}) \in \mathbb{F}[\mathbf{x}; \sigma, \delta_\sigma] $. Thus, it follows from Corollary \ref{cor varphi is isomorphism} that
$$ \deg(\varphi_A(F(\mathbf{x}))) \leq \deg(F(\mathbf{x})) = \deg(\varphi_{A^{-1}} (\varphi_A(F(\mathbf{x})))) \leq \deg(\varphi_A(F(\mathbf{x}))), $$
and the result follows.
\end{proof}

\section{Translations of variables} \label{sec translations}

In Subsection \ref{subsec derivations}, we showed that all derivations are inner derivations over finite fields (Theorem \ref{theorem description vector der}). 

In this section, we study translations of variables over multivariate skew polynomial rings with the same ring morphism $ \sigma : \mathbb{F} \longrightarrow \mathbb{F}^{n \times n} $ and whose $ \sigma $-derivations differ additively by an inner $ \sigma $-derivation. Such transformations of variables yield again ring isomorphisms between multivariate skew polynomial rings. By composing them with the linear transformations from the previous section, we will obtain in Section \ref{sec affine} a complete collection of affine transformations of variables between the corresponding multivariate skew polynomial rings. Combined with Theorem \ref{theorem description vector der}, these translations of variables will show in Section \ref{sec affine} how to simplify multivariate skew polynomial rings over finite fields.

Again, translations of variables preserve evaluations and degrees of free multivariate skew polynomials, hence they can be extended to \textit{non-free} multivariate skew polynomial rings. We will also divide the section in different subsections, each devoted to a different property of the ring isomorphism, which we now denote by $ \phi_{\boldsymbol\lambda} $ and which depends on the translation vector $ \boldsymbol\lambda \in \mathbb{F}^n $.

\subsection{Definition of the map $ \phi_{\boldsymbol\lambda} $} \label{subsec definition of phi}

As mentioned in Subsection \ref{subsec derivations}, the definition of inner derivations in Definition \ref{def vector der} gives derivations only over fields. A slight modification, first given in \cite[Example 2]{multivariateskew} when $ n > 1 $, allows to consider inner derivations over any division ring. 

\begin{definition} \label{def inner derivations}
Given a ring morphism $ \sigma : \mathbb{F} \longrightarrow \mathbb{F}^{n \times n} $, we say that $ \delta : \mathbb{F} \longrightarrow \mathbb{F}^n $ is an inner $ \sigma $-derivation if there exists $ \boldsymbol\lambda \in \mathbb{F}^n $ such that
$$ \delta(a) = \boldsymbol\lambda a - \sigma(a) \boldsymbol\lambda, $$
for all $ a \in \mathbb{F} $.
\end{definition}

Observe that, with this definition, an inner $ \sigma $-derivation is indeed a $ \sigma $-derivation, since it satisfies (\ref{eq derivations multiplicative property}) even if $ \mathbb{F} $ is non-commutative.

Translations of variables are then defined as follows.

\begin{definition} \label{def map phi lambda}
Fix a ring morphism $ \sigma : \mathbb{F} \longrightarrow \mathbb{F}^{n \times n} $ and $ \sigma $-derivations $ \delta, \delta^\prime : \mathbb{F} \longrightarrow \mathbb{F}^n $ such that $ \delta - \delta^\prime $ is an inner $ \sigma $-derivation, that is,
$$ \delta(a) - \delta^\prime(a) = \boldsymbol\lambda a - \sigma(a) \boldsymbol\lambda, $$
for $ a \in \mathbb{F} $, for a given $ \boldsymbol\lambda \in \mathbb{F}^n $. We then define the map
\begin{equation}
\phi_{\boldsymbol\lambda} : \mathbb{F}[\mathbf{x}; \sigma, \delta] \longrightarrow \mathbb{F}[\mathbf{x}; \sigma, \delta^\prime]
\label{eq definition of phi lambda}
\end{equation}
as follows. First, we define $ \phi_{\boldsymbol\lambda}(1) = 1 $ and 
$$ \phi_{\boldsymbol\lambda}(\mathbf{x}) = \left( \begin{array}{c}
\phi_{\boldsymbol\lambda} (x_1) \\
\phi_{\boldsymbol\lambda} (x_2) \\
\vdots \\
\phi_{\boldsymbol\lambda} (x_n) 
\end{array} \right) = \left( \begin{array}{c}
x_1 + \lambda_1 \\
x_2 + \lambda_2 \\
\vdots \\
x_n + \lambda_n 
\end{array} \right) = \mathbf{x} + \boldsymbol\lambda. $$
Next we define $ \phi_{\boldsymbol\lambda}(\mathfrak{m}(\mathbf{x})) $ recursively on monomials $ \mathfrak{m}(\mathbf{x}) \in \mathcal{M} $. Assume that $ \phi_{\boldsymbol\lambda}(\mathfrak{m}(\mathbf{x})) $ is defined, for a given $ \mathfrak{m}(\mathbf{x}) \in \mathcal{M} $. Then define
$$ \phi_{\boldsymbol\lambda}(\mathbf{x} \mathfrak{m}(\mathbf{x})) = (\mathbf{x} + \boldsymbol\lambda) \phi_{\boldsymbol\lambda}(\mathfrak{m}(\mathbf{x})) . $$
Finally, if $ F(\mathbf{x}) = \sum_{\mathfrak{m}(\mathbf{x}) \in \mathcal{M}} F_\mathfrak{m} \mathfrak{m}(\mathbf{x}) \in \mathbb{F}[\mathbf{x}; \sigma, \delta] $, where $ F_\mathfrak{m} \in \mathbb{F} $, for all $ \mathfrak{m}(\mathbf{x}) \in \mathcal{M} $, then we define
\begin{equation}
\phi_{\boldsymbol\lambda}(F(\mathbf{x})) = \sum_{\mathfrak{m}(\mathbf{x}) \in \mathcal{M}} F_\mathfrak{m} \phi_{\boldsymbol\lambda}(\mathfrak{m}(\mathbf{x})).
\label{eq def by linearity of phi lambda}
\end{equation}
Given $ F(\mathbf{x}) \in \mathbb{F}[\mathbf{x}; \sigma, \delta] $ or $ \mathfrak{m}(\mathbf{x}) \in \mathcal{M} $, we will use the notation
\begin{equation}
F(\mathbf{x} + \boldsymbol\lambda) = \phi_{\boldsymbol\lambda}(F(\mathbf{x})) \quad \textrm{and} \quad \mathfrak{m}(\mathbf{x} + \boldsymbol\lambda) = \phi_{\boldsymbol\lambda}(\mathfrak{m} (\mathbf{x})).
\label{eq notation translation}
\end{equation}
\end{definition}

\begin{remark}
Note that, if $ \mathbb{F} $ is a field and $ \sigma = {\rm Id} $, then $ \delta = \delta^\prime $. In particular, if $ \delta = \delta^\prime = 0 $, then both multivariate skew polynomial rings in (\ref{eq definition of phi lambda}) are the conventional free multivariate polynomial ring $ \mathbb{F}[\mathbf{x}] $. Moreover, in that case $ \phi_{\boldsymbol\lambda} $ coincides with the usual definition of $ F(\mathbf{x} + \boldsymbol\lambda) $ that consists in substituting $ \mathbf{x} $ by $ \mathbf{x} + \boldsymbol\lambda $, for any $ F(\mathbf{x}) \in \mathbb{F}[\mathbf{x}] $. Observe that if $ \mathbb{F} $ is non-commutative and $ \boldsymbol\lambda \neq \mathbf{0} $, then $ \delta \neq \delta^\prime $ since the map $ a \mapsto \boldsymbol\lambda a - a \boldsymbol\lambda $, for $ a \in \mathbb{F} $, is not necessarily zero if the components of $ \boldsymbol\lambda $ do not lie in the center of $ \mathbb{F} $.
\end{remark}

\subsection{The map $ \phi_{\boldsymbol\lambda} $ is additive and multiplicative}

In this subsection, we will prove that the map $ \phi_{\boldsymbol\lambda} $ in Definition \ref{def map phi lambda} is a ring morphism. The proofs of the properties of $ \phi_{\boldsymbol\lambda} $ are similar to those of $ \varphi_A $ in Section \ref{sec linear transformations}. However, for clarity, we include the proof of the following result.

\begin{proposition} \label{prop phi lambda is ring morphism}
Let $ \sigma : \mathbb{F} \longrightarrow \mathbb{F}^{n \times n} $ be a ring morphism and let $ \delta, \delta^\prime : \mathbb{F} \longrightarrow \mathbb{F}^n $ be $ \sigma $-derivations as in Definition \ref{def map phi lambda}, for a given $ \boldsymbol\lambda \in \mathbb{F}^n $. Then the map $ \phi_{\boldsymbol\lambda} : \mathbb{F}[\mathbf{x}; \sigma, \delta] \longrightarrow \mathbb{F}[\mathbf{x}; \sigma, \delta^\prime] $ in Definition \ref{def map phi lambda} is a ring morphism. That is, with notation as in (\ref{eq notation translation}), it holds that
$$ (F + G)(\mathbf{x} + \boldsymbol\lambda) = F(\mathbf{x} + \boldsymbol\lambda) + G(\mathbf{x} + \boldsymbol\lambda) \quad \textrm{and} \quad (FG)(\mathbf{x} + \boldsymbol\lambda) = F(\mathbf{x} + \boldsymbol\lambda) G(\mathbf{x} + \boldsymbol\lambda), $$
for all $ F(\mathbf{x}), G(\mathbf{x}) \in \mathbb{F}[\mathbf{x}; \sigma, \delta] $, where operations on the right-hand sides are in the ring $ \mathbb{F}[\mathbf{x}; \sigma, \delta^\prime] $, and operations on the left-hand sides are in the ring $ \mathbb{F}[\mathbf{x}; \sigma, \delta] $.
\end{proposition}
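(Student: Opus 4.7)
The plan is to mimic the proof of Proposition \ref{prop phi A is ring morphism}, using the inner $\sigma$-derivation relation between $\delta$ and $\delta'$ in the key commutation identity.

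First, additivity (indeed, left $\mathbb{F}$-linearity) of $\phi_{\boldsymbol\lambda}$ is immediate from the definition (\ref{eq def by linearity of phi lambda}), since $\phi_{\boldsymbol\lambda}$ is built monomial-wise. For multiplicativity, by left $\mathbb{F}$-linearity it suffices to show that
$$ \phi_{\boldsymbol\lambda}(\mathfrak{m}(\mathbf{x}) G(\mathbf{x})) = \phi_{\boldsymbol\lambda}(\mathfrak{m}(\mathbf{x})) \phi_{\boldsymbol\lambda}(G(\mathbf{x})), $$
for every monomial $\mathfrak{m}(\mathbf{x}) \in \mathcal{M}$ and every $G(\mathbf{x}) \in \mathbb{F}[\mathbf{x}; \sigma, \delta]$. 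I will prove this by induction on the length of $\mathfrak{m}(\mathbf{x})$. The case $\mathfrak{m}(\mathbf{x}) = 1$ is trivial, so the heart of the argument is the case $\mathfrak{m}(\mathbf{x}) = x_i$ for $i = 1,2,\ldots,n$, i.e., showing that $\phi_{\boldsymbol\lambda}(\mathbf{x} G(\mathbf{x})) = (\mathbf{x} + \boldsymbol\lambda) \phi_{\boldsymbol\lambda}(G(\mathbf{x}))$.

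To handle this base step, I would write $G(\mathbf{x}) = \sum_{\mathfrak{m}(\mathbf{x}) \in \mathcal{M}} G_\mathfrak{m} \mathfrak{m}(\mathbf{x})$ and expand the left-hand side using the commutation rule in $\mathbb{F}[\mathbf{x}; \sigma, \delta]$:
$$ \mathbf{x} G(\mathbf{x}) = \sum_{\mathfrak{m}(\mathbf{x}) \in \mathcal{M}} \left( \sigma(G_\mathfrak{m}) \mathbf{x} \mathfrak{m}(\mathbf{x}) + \delta(G_\mathfrak{m}) \mathfrak{m}(\mathbf{x}) \right), $$
so that by definition
$$ \phi_{\boldsymbol\lambda}(\mathbf{x} G(\mathbf{x})) = \sum_{\mathfrak{m}(\mathbf{x}) \in \mathcal{M}} \left( \sigma(G_\mathfrak{m}) (\mathbf{x}+\boldsymbol\lambda) \mathfrak{m}(\mathbf{x}+\boldsymbol\lambda) + \delta(G_\mathfrak{m}) \mathfrak{m}(\mathbf{x}+\boldsymbol\lambda) \right). $$
For the right-hand side, I would compute in $\mathbb{F}[\mathbf{x}; \sigma, \delta']$:
$$ (\mathbf{x} + \boldsymbol\lambda) G_\mathfrak{m} = \sigma(G_\mathfrak{m}) \mathbf{x} + \delta'(G_\mathfrak{m}) + \boldsymbol\lambda G_\mathfrak{m}, $$
and here is the main point of the argument: using the assumption $\delta(a) - \delta'(a) = \boldsymbol\lambda a - \sigma(a)\boldsymbol\lambda$, the right-hand side simplifies to $\sigma(G_\mathfrak{m})(\mathbf{x} + \boldsymbol\lambda) + \delta(G_\mathfrak{m})$. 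Substituting this back gives exactly $\phi_{\boldsymbol\lambda}(\mathbf{x} G(\mathbf{x}))$. This is the only nontrivial computation; it is precisely the translation of the inner-derivation condition into the shifted commutation rule that makes $\phi_{\boldsymbol\lambda}$ multiplicative, and it is the step I expect to require the most care with the order of operations in a non-commutative setting.

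Having established the base identity, the inductive step is routine: assuming $\phi_{\boldsymbol\lambda}(\mathfrak{m}(\mathbf{x}) G(\mathbf{x})) = \mathfrak{m}(\mathbf{x}+\boldsymbol\lambda) \phi_{\boldsymbol\lambda}(G(\mathbf{x}))$ for a given $\mathfrak{m}$, I would apply the base case to the polynomial $\mathfrak{m}(\mathbf{x}) G(\mathbf{x})$ to obtain
$$ \phi_{\boldsymbol\lambda}(\mathbf{x}(\mathfrak{m}(\mathbf{x}) G(\mathbf{x}))) = (\mathbf{x} + \boldsymbol\lambda) \phi_{\boldsymbol\lambda}(\mathfrak{m}(\mathbf{x}) G(\mathbf{x})) = ((\mathbf{x}+\boldsymbol\lambda) \mathfrak{m}(\mathbf{x}+\boldsymbol\lambda)) \phi_{\boldsymbol\lambda}(G(\mathbf{x})), $$
which by Definition \ref{def map phi lambda} equals $\phi_{\boldsymbol\lambda}(\mathbf{x}\mathfrak{m}(\mathbf{x})) \phi_{\boldsymbol\lambda}(G(\mathbf{x}))$, completing the induction and the proof.
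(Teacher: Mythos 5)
Your proposal is correct and follows essentially the same route as the paper's proof: reduce to the monomial case by left linearity, establish the base case $\mathfrak{m}(\mathbf{x})=x_i$ by commuting $(\mathbf{x}+\boldsymbol\lambda)$ past the coefficients $G_\mathfrak{m}$ and absorbing $\boldsymbol\lambda G_\mathfrak{m} - \sigma(G_\mathfrak{m})\boldsymbol\lambda$ via the inner-derivation relation to turn $\delta'$ into $\delta$, then induct using associativity. The key computation you identify is exactly the one the paper carries out.
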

\begin{proof}
The fact that $ \phi_{\boldsymbol\lambda} $ is additive (or even left linear over $ \mathbb{F} $) follows directly from the definitions.

Because of (\ref{eq def by linearity of phi lambda}), to prove that $ \phi_{\boldsymbol\lambda} $ is multiplicative, we only need to show that
$$ \phi_{\boldsymbol\lambda}(\mathfrak{m}(\mathbf{x}) G(\mathbf{x})) = \phi_{\boldsymbol\lambda}(\mathfrak{m}(\mathbf{x})) \phi_{\boldsymbol\lambda}(G(\mathbf{x})), $$
for all $ \mathfrak{m}(\mathbf{x}) \in \mathcal{M} $ and all $ G(\mathbf{x}) \in \mathbb{F}[\mathbf{x}; \sigma, \delta] $. We do this recursively on $ \mathfrak{m}(\mathbf{x}) \in \mathcal{M} $.

First, the result is trivial for $ \mathfrak{m}(\mathbf{x}) = 1 $. Following the proof of Proposition \ref{prop phi A is ring morphism}, we need to prove the result for $ \mathfrak{m}(\mathbf{x}) = x_i $, for all $ i = 1,2, \ldots, n $, to perform the induction step. Denote $ G(\mathbf{x}) = \sum_{\mathfrak{m}(\mathbf{x}) \in \mathcal{M}} G_\mathfrak{m} \mathfrak{m}(\mathbf{x}) $, where $ G_\mathfrak{m} \in \mathbb{F} $, for all $ \mathfrak{m}(\mathbf{x}) \in \mathcal{M} $. First, it holds that
$$ \mathbf{x} G(\mathbf{x}) = \sum_{\mathfrak{m}(\mathbf{x}) \in \mathcal{M}} \mathbf{x}(G_\mathfrak{m} \mathfrak{m}(\mathbf{x})) = \sum_{\mathfrak{m}(\mathbf{x}) \in \mathcal{M}} (\sigma(G_\mathfrak{m}) \mathbf{x} \mathfrak{m}(\mathbf{x}) + \delta(G_\mathfrak{m}) \mathfrak{m}(\mathbf{x}) ), $$
since these operations are in the ring $ \mathbb{F}[\mathbf{x}; \sigma, \delta] $. Thus by definition, it holds that
$$ \phi_{\boldsymbol\lambda}( \mathbf{x} G(\mathbf{x})) = \sum_{\mathfrak{m}(\mathbf{x}) \in \mathcal{M}} (\sigma(G_\mathfrak{m}) (\mathbf{x} + \boldsymbol\lambda ) \mathfrak{m}(\mathbf{x} + \boldsymbol\lambda) + \delta(G_\mathfrak{m}) \mathfrak{m}(\mathbf{x} + \boldsymbol\lambda ) ). $$
Next, working in the ring $ \mathbb{F}[\mathbf{x}; \sigma, \delta^\prime] $, we have that
$$ \phi_{\boldsymbol\lambda}( \mathbf{x}) \phi_{\boldsymbol\lambda}(G(\mathbf{x})) = (\mathbf{x} + \boldsymbol\lambda) G(\mathbf{x} + \boldsymbol\lambda) = $$
$$ \sum_{\mathfrak{m}(\mathbf{x}) \in \mathcal{M}} (\sigma(G_\mathfrak{m}) \mathbf{x} \mathfrak{m}(\mathbf{x} + \boldsymbol\lambda) + \delta^\prime(G_\mathfrak{m}) \mathfrak{m}(\mathbf{x} + \boldsymbol\lambda) + \boldsymbol\lambda G_\mathfrak{m} \mathfrak{m}(\mathbf{x} + \boldsymbol\lambda) ) = $$
$$ \sum_{\mathfrak{m}(\mathbf{x}) \in \mathcal{M}} (\sigma(G_\mathfrak{m}) (\mathbf{x} + \boldsymbol\lambda) \mathfrak{m}(\mathbf{x} + \boldsymbol\lambda) + \delta^\prime(G_\mathfrak{m}) \mathfrak{m}(\mathbf{x} + \boldsymbol\lambda) + (\boldsymbol\lambda G_\mathfrak{m} - \sigma(G_\mathfrak{m}) \boldsymbol\lambda) \mathfrak{m}(\mathbf{x} + \boldsymbol\lambda) ) = $$
$$ \sum_{\mathfrak{m}(\mathbf{x}) \in \mathcal{M}} (\sigma(G_\mathfrak{m}) (\mathbf{x} + \boldsymbol\lambda) \mathfrak{m}(\mathbf{x} + \boldsymbol\lambda) + \delta(G_\mathfrak{m}) \mathfrak{m}(\mathbf{x} + \boldsymbol\lambda) ) . $$
Therefore, we deduce that
\begin{equation}
\phi_{\boldsymbol\lambda}(\mathbf{x} G(\mathbf{x})) = \phi_{\boldsymbol\lambda}( \mathbf{x}) \phi_{\boldsymbol\lambda}(G(\mathbf{x})),
\label{eq proof phi lambda morphism}
\end{equation}
where operations on the right-hand side are in the ring $ \mathbb{F}[\mathbf{x}; \sigma, \delta^\prime] $.

Finally, assume that the result holds for a given $ \mathfrak{m}(\mathbf{x}) \in \mathcal{M} $. To prove it for $ x_i \mathfrak{m}(\mathbf{x}) $, for $ i = 1,2, \ldots, n $, we may associate terms as in the proof of Proposition \ref{prop phi A is ring morphism}, using now (\ref{eq proof phi lambda morphism}), and we are done.
\end{proof}

\subsection{The map $ \phi_{\boldsymbol\lambda} $ preserves evaluations}

In this subsection, we show that the $ (\sigma, \delta^\prime) $-evaluation of $ F(\mathbf{x} + \boldsymbol\lambda) \in \mathbb{F}[\mathbf{x}; \sigma, \delta^\prime] $ at an affine point $ \mathbf{a} \in \mathbb{F}^n $ coincides with the $ (\sigma, \delta) $-evaluation of $ F(\mathbf{x}) \in \mathbb{F}[\mathbf{x}; \sigma, \delta] $ at the affine point $ \mathbf{a} + \boldsymbol\lambda \in \mathbb{F}^n $. See (\ref{eq evaluation}) in Section \ref{sec preliminaries} for the definition of evaluation.

\begin{proposition} \label{prop phi lambda evaluations}
Let $ \sigma : \mathbb{F} \longrightarrow \mathbb{F}^{n \times n} $ be a ring morphism and let $ \delta, \delta^\prime : \mathbb{F} \longrightarrow \mathbb{F}^n $ be $ \sigma $-derivations as in Definition \ref{def map phi lambda}, for a given $ \boldsymbol\lambda \in \mathbb{F}^n $. The map $ \phi_{\boldsymbol\lambda} : \mathbb{F}[\mathbf{x}; \sigma, \delta] \longrightarrow \mathbb{F}[\mathbf{x}; \sigma, \delta^\prime] $ in Definition \ref{def map phi lambda} preserves evaluations over any point $ \mathbf{a} \in \mathbb{F}^n $ after translation by the vector $ \boldsymbol\lambda $. More concretely, according to the definition of evaluation in (\ref{eq evaluation}), it holds that
$$ E_{\mathbf{a}}^{\sigma, \delta^\prime}(F(\mathbf{x} + \boldsymbol\lambda)) = E_{\mathbf{a} + \boldsymbol\lambda}^{\sigma, \delta}(F(\mathbf{x})), $$
for all $ F(\mathbf{x}) \in \mathbb{F}[\mathbf{x}; \sigma, \delta] $ and all $ \mathbf{a} \in \mathbb{F}^n $. In other words, the evaluation of $ F(\mathbf{x} + \boldsymbol\lambda) $ in the point $ \mathbf{a} $ is $ F(\mathbf{a} + \boldsymbol\lambda) $.
\end{proposition}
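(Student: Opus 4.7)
The plan is to mimic the proof of Proposition \ref{prop varphi preserves evaluations}, exploiting the fact that $\phi_{\boldsymbol\lambda}$ is a ring morphism (Proposition \ref{prop phi lambda is ring morphism}) together with the uniqueness of the multivariate right division by the linear polynomials $x_i - a_i$, as established in the preliminaries via \cite[Lemma 5]{multivariateskew}.

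Fix $F(\mathbf{x}) \in \mathbb{F}[\mathbf{x};\sigma,\delta]$ and $\mathbf{a} = (a_1,\ldots,a_n) \in \mathbb{F}^n$. First I would apply the division algorithm inside $\mathbb{F}[\mathbf{x};\sigma,\delta]$ using the shifted point $\mathbf{a}+\boldsymbol\lambda$, obtaining $G_1(\mathbf{x}),\ldots,G_n(\mathbf{x}) \in \mathbb{F}[\mathbf{x};\sigma,\delta]$ such that
$$ F(\mathbf{x}) = \sum_{i=1}^n G_i(\mathbf{x})\bigl(x_i - (a_i+\lambda_i)\bigr) + F(\mathbf{a}+\boldsymbol\lambda), $$
where the constant term is precisely $E^{\sigma,\delta}_{\mathbf{a}+\boldsymbol\lambda}(F(\mathbf{x}))$.

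Next I would apply $\phi_{\boldsymbol\lambda}$ to both sides. Since $\phi_{\boldsymbol\lambda}$ is an $\mathbb{F}$-linear ring morphism by Proposition \ref{prop phi lambda is ring morphism}, fixes constants, and sends each $x_i$ to $x_i+\lambda_i$, the image reads
$$ F(\mathbf{x}+\boldsymbol\lambda) = \sum_{i=1}^n G_i(\mathbf{x}+\boldsymbol\lambda)\bigl((x_i+\lambda_i) - (a_i+\lambda_i)\bigr) + F(\mathbf{a}+\boldsymbol\lambda) = \sum_{i=1}^n G_i(\mathbf{x}+\boldsymbol\lambda)(x_i-a_i) + F(\mathbf{a}+\boldsymbol\lambda), $$
where the equality now takes place in $\mathbb{F}[\mathbf{x};\sigma,\delta^\prime]$. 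This is a decomposition of $F(\mathbf{x}+\boldsymbol\lambda)$ as a right-$\mathbb{F}[\mathbf{x};\sigma,\delta^\prime]$-combination of the linear polynomials $x_i - a_i$ plus the constant $F(\mathbf{a}+\boldsymbol\lambda)\in\mathbb{F}$.

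Finally, I invoke the uniqueness of such a decomposition in $\mathbb{F}[\mathbf{x};\sigma,\delta^\prime]$ from \cite[Lemma 5]{multivariateskew}, which tells us that the constant remainder is exactly $E^{\sigma,\delta^\prime}_{\mathbf{a}}(F(\mathbf{x}+\boldsymbol\lambda))$. Comparing both sides gives
$$ E^{\sigma,\delta^\prime}_{\mathbf{a}}(F(\mathbf{x}+\boldsymbol\lambda)) = F(\mathbf{a}+\boldsymbol\lambda) = E^{\sigma,\delta}_{\mathbf{a}+\boldsymbol\lambda}(F(\mathbf{x})), $$
which is the claim. The only point to be slightly careful with is that the translation on the right-hand side lands exactly at $\mathbf{a}+\boldsymbol\lambda$, so that the $\lambda_i$ terms cancel after applying $\phi_{\boldsymbol\lambda}$; the entire argument otherwise just packages Proposition \ref{prop phi lambda is ring morphism} and the uniqueness of the remainder. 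No technical obstacle is expected, as the argument is a transparent translation of the linear case.
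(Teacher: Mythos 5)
Your proof is correct and follows exactly the route the paper intends: the paper's own proof is stated only as ``analogous to the proof of Proposition \ref{prop varphi preserves evaluations}'', and your argument is precisely that analogue, dividing $F(\mathbf{x})$ at the shifted point $\mathbf{a}+\boldsymbol\lambda$, pushing the identity through the ring morphism $\phi_{\boldsymbol\lambda}$ so that the $\lambda_i$ terms cancel, and invoking uniqueness of the remainder. No issues.
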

\begin{proof}
Analogous to the proof of Proposition \ref{prop varphi preserves evaluations}.
\end{proof}

Again, using this result, we may extend $ \phi_{\boldsymbol\lambda} $ to non-free multivariate skew polynomial rings, following the definitions in Section \ref{sec preliminaries} and the notation in Corollary \ref{cor extend varphi to non-free}. 

\begin{corollary} \label{cor extend phi lambda to non-free}
Let $ \sigma : \mathbb{F} \longrightarrow \mathbb{F}^{n \times n} $ be a ring morphism and let $ \delta, \delta^\prime : \mathbb{F} \longrightarrow \mathbb{F}^n $ be $ \sigma $-derivations as in Definition \ref{def map phi lambda}, for a given $ \boldsymbol\lambda \in \mathbb{F}^n $. It holds that
$$ \phi_{\boldsymbol\lambda} \left( I^{\sigma, \delta}(\mathbb{F}^n) \right) = I^{\sigma, \delta^\prime}(\mathbb{F}^n). $$
Therefore, we may extend $ \phi_{\boldsymbol\lambda} $ to a ring morphism between non-free multivariate skew polynomial rings, that is, to ring morphisms
\begin{equation}
\phi_{\boldsymbol\lambda} : \mathbb{F}[\mathbf{x}; \sigma, \delta]/I \longrightarrow \mathbb{F}[\mathbf{x}; \sigma, \delta^\prime]/J,
\label{eq phi lambda extension to non-free}
\end{equation}
for any two-sided ideal $ I \subseteq I^{\sigma, \delta}(\mathbb{F}^n) $, where $ J = \phi_{\boldsymbol\lambda}(I) \subseteq I^{\sigma, \delta^\prime}(\mathbb{F}^n) $ is also a two-sided ideal. Furthermore, evaluations are also preserved by $ \phi_{\boldsymbol\lambda} $ as in (\ref{eq varphi preserves evalautions}) over non-free multivariate skew polynomial rings.
\end{corollary}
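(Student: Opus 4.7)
The plan is to mimic the strategy used for Corollary \ref{cor extend varphi to non-free}, but replacing the linear change of variables $\varphi_A$ with the translation $\phi_{\boldsymbol\lambda}$ and the preservation of evaluations from Proposition \ref{prop varphi preserves evaluations} with that of Proposition \ref{prop phi lambda evaluations}. Concretely, the first step is to prove the set-theoretic identity $\phi_{\boldsymbol\lambda}(I^{\sigma,\delta}(\mathbb{F}^n)) = I^{\sigma,\delta'}(\mathbb{F}^n)$. For the inclusion $\subseteq$, take $F(\mathbf{x}) \in I^{\sigma,\delta}(\mathbb{F}^n)$ and any $\mathbf{a} \in \mathbb{F}^n$. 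By Proposition \ref{prop phi lambda evaluations},
$$ E_{\mathbf{a}}^{\sigma,\delta'}(\phi_{\boldsymbol\lambda}(F(\mathbf{x}))) = E_{\mathbf{a}}^{\sigma,\delta'}(F(\mathbf{x}+\boldsymbol\lambda)) = E_{\mathbf{a}+\boldsymbol\lambda}^{\sigma,\delta}(F(\mathbf{x})) = 0, $$
since $F$ vanishes at every point. Hence $\phi_{\boldsymbol\lambda}(F(\mathbf{x})) \in I^{\sigma,\delta'}(\mathbb{F}^n)$.

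For the reverse inclusion, I would need the analogue of Corollary \ref{cor varphi is isomorphism} for $\phi_{\boldsymbol\lambda}$, namely that $\phi_{\boldsymbol\lambda}$ is a ring isomorphism with inverse $\phi_{-\boldsymbol\lambda} : \mathbb{F}[\mathbf{x};\sigma,\delta'] \longrightarrow \mathbb{F}[\mathbf{x};\sigma,\delta]$ (note that $\delta' - \delta$ is also an inner $\sigma$-derivation, given by $-\boldsymbol\lambda$, so Definition \ref{def map phi lambda} applies in the reverse direction). Granting this, the same evaluation-preservation argument with $-\boldsymbol\lambda$ in place of $\boldsymbol\lambda$ shows $\phi_{-\boldsymbol\lambda}(I^{\sigma,\delta'}(\mathbb{F}^n)) \subseteq I^{\sigma,\delta}(\mathbb{F}^n)$, which upon applying $\phi_{\boldsymbol\lambda}$ yields $I^{\sigma,\delta'}(\mathbb{F}^n) \subseteq \phi_{\boldsymbol\lambda}(I^{\sigma,\delta}(\mathbb{F}^n))$.

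For the second part, given a two-sided ideal $I \subseteq I^{\sigma,\delta}(\mathbb{F}^n)$, set $J = \phi_{\boldsymbol\lambda}(I)$. Since $\phi_{\boldsymbol\lambda}$ is a bijective ring morphism (by Proposition \ref{prop phi lambda is ring morphism} and the isomorphism fact above), $J$ is automatically a two-sided ideal of $\mathbb{F}[\mathbf{x};\sigma,\delta']$, and $J \subseteq \phi_{\boldsymbol\lambda}(I^{\sigma,\delta}(\mathbb{F}^n)) = I^{\sigma,\delta'}(\mathbb{F}^n)$ by the first step. The induced map $\phi_{\boldsymbol\lambda} : \mathbb{F}[\mathbf{x};\sigma,\delta]/I \to \mathbb{F}[\mathbf{x};\sigma,\delta']/J$ is then well-defined by the universal property of the quotient, and is a ring morphism. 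Finally, preservation of evaluations on the quotients follows at once from Proposition \ref{prop phi lambda evaluations} applied to any lift of a representative, since both $I \subseteq I^{\sigma,\delta}(\mathbb{F}^n)$ and $J \subseteq I^{\sigma,\delta'}(\mathbb{F}^n)$ consist of polynomials vanishing at every point, so evaluation is well-defined on the quotient rings.

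There is no significant obstacle: the content of the proof is purely the transport-of-structure argument, and everything reduces to Proposition \ref{prop phi lambda evaluations} together with the bijectivity of $\phi_{\boldsymbol\lambda}$ (with inverse $\phi_{-\boldsymbol\lambda}$). The only mild point to verify is that $(\delta,\delta')$ and $(\delta',\delta)$ are both admissible pairs for the construction of Definition \ref{def map phi lambda}, which is immediate since $\delta'-\delta = -(\delta-\delta')$ is inner whenever $\delta-\delta'$ is.
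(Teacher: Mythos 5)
Your proof is correct and follows exactly the route the paper intends: the inclusion $\phi_{\boldsymbol\lambda}(I^{\sigma,\delta}(\mathbb{F}^n)) \subseteq I^{\sigma,\delta'}(\mathbb{F}^n)$ from Proposition \ref{prop phi lambda evaluations} (noting that $\mathbf{a} \mapsto \mathbf{a}+\boldsymbol\lambda$ is a bijection of $\mathbb{F}^n$), the reverse inclusion via the inverse map $\phi_{-\boldsymbol\lambda}$, and the universal property of quotients for the induced morphism. The only cosmetic point is that the invertibility of $\phi_{\boldsymbol\lambda}$ (Proposition \ref{prop inverse of phi lambda}) is established in the paper one subsection after this corollary is stated, so your argument implicitly reorders that material, which is harmless.
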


Finally, the remarks on conjugacy and the product rule after Corollary \ref{cor extend varphi to non-free} can be translated mutatis mutandis to translations of variables. We leave the details to the reader, but we remark that, in this case, $ \mathbf{a} $ and $ \mathbf{b} $ are $ (\sigma, \delta^\prime) $-conjugate if, and only if, $ \mathbf{a} + \boldsymbol\lambda $ and $ \mathbf{b} + \boldsymbol\lambda $ are $ (\sigma, \delta) $-conjugate, with the same element $ c \in \mathbb{F}^* $ in the conjugacy relation (\ref{eq conjugacion}) in both cases.

\subsection{The inverse of the map $ \phi_{\boldsymbol\lambda} $}

In this subsection, we show that the ring morphism $ \phi_{\boldsymbol\lambda} $ has $ \phi_{- \boldsymbol\lambda} $ as its inverse, and thus it is a ring isomorphism. The main result is the following.

\begin{proposition} \label{prop inverse of phi lambda}
Let $ \sigma : \mathbb{F} \longrightarrow \mathbb{F}^{n \times n} $ be a ring morphism and let $ \delta, \delta^\prime, \delta^{\prime \prime} : \mathbb{F} \longrightarrow \mathbb{F}^n $ be $ \sigma $-derivations such that
$$ \delta(a) - \delta^\prime(a) = \boldsymbol\lambda a - \sigma(a) \boldsymbol\lambda \quad \textrm{and} \quad \delta^\prime(a) - \delta^{\prime \prime}(a) = \boldsymbol\lambda^\prime a - \sigma(a) \boldsymbol\lambda^\prime, $$
for all $ a \in \mathbb{F} $, for certain $ \boldsymbol\lambda, \boldsymbol\lambda^\prime \in \mathbb{F}^n $. It holds that
$$ \phi_{\boldsymbol\lambda^\prime} \circ \phi_{\boldsymbol\lambda} = \phi_{\boldsymbol\lambda + \boldsymbol\lambda^\prime}. $$
In addition, we have that $ \phi_\mathbf{0} = {\rm Id} $, hence
$$ \phi_{\boldsymbol\lambda} \circ \phi_{-\boldsymbol\lambda} = \phi_{-\boldsymbol\lambda} \circ \phi_{\boldsymbol\lambda} = \phi_\mathbf{0} = {\rm Id}. $$ 
That is, $ \phi_{\boldsymbol\lambda}^{-1} = \phi_{-\boldsymbol\lambda} $, and $ \phi_{\boldsymbol\lambda} $ is a ring isomorphism. 
\end{proposition}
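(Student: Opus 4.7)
My plan is to mimic the proof of Proposition~\ref{prop inverse of phi A} almost verbatim. First I would check that $\phi_{\boldsymbol\lambda + \boldsymbol\lambda'}$ is even well-defined as a map from $\mathbb{F}[\mathbf{x};\sigma,\delta]$ to $\mathbb{F}[\mathbf{x};\sigma,\delta'']$ in the sense of Definition~\ref{def map phi lambda}: adding the two inner-$\sigma$-derivation equations in the hypotheses yields
$$ \delta(a) - \delta''(a) = (\boldsymbol\lambda + \boldsymbol\lambda') a - \sigma(a)(\boldsymbol\lambda + \boldsymbol\lambda'), $$
for all $a \in \mathbb{F}$, so $\delta - \delta''$ is indeed the inner $\sigma$-derivation associated with $\boldsymbol\lambda + \boldsymbol\lambda'$.

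Next, since both $\phi_{\boldsymbol\lambda'} \circ \phi_{\boldsymbol\lambda}$ and $\phi_{\boldsymbol\lambda + \boldsymbol\lambda'}$ are left $\mathbb{F}$-linear (inherited from Proposition~\ref{prop phi lambda is ring morphism} and (\ref{eq def by linearity of phi lambda})), it suffices to prove the identity on monomials $\mathfrak{m}(\mathbf{x}) \in \mathcal{M}$. I would argue by induction on $\deg(\mathfrak{m}(\mathbf{x}))$. The base $\mathfrak{m}(\mathbf{x}) = 1$ is trivial. For $\mathfrak{m}(\mathbf{x}) = x_i$, a direct computation gives
$$ \phi_{\boldsymbol\lambda'}(\phi_{\boldsymbol\lambda}(x_i)) = \phi_{\boldsymbol\lambda'}(x_i + \lambda_i) = x_i + \lambda'_i + \lambda_i = \phi_{\boldsymbol\lambda + \boldsymbol\lambda'}(x_i), $$
where the middle equality uses that $\phi_{\boldsymbol\lambda'}$ fixes scalars in $\mathbb{F}$.

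For the inductive step, assume the result for a monomial $\mathfrak{m}(\mathbf{x})$ and consider $x_i \mathfrak{m}(\mathbf{x})$. Applying the recursive definition of $\phi_{\boldsymbol\lambda}$ and then the multiplicativity of $\phi_{\boldsymbol\lambda'}$ (Proposition~\ref{prop phi lambda is ring morphism}, in the ring $\mathbb{F}[\mathbf{x};\sigma,\delta']$), I would chain the equalities
$$ \phi_{\boldsymbol\lambda'}(\phi_{\boldsymbol\lambda}(x_i \mathfrak{m}(\mathbf{x}))) = \phi_{\boldsymbol\lambda'}\bigl((x_i + \lambda_i)\phi_{\boldsymbol\lambda}(\mathfrak{m}(\mathbf{x}))\bigr) = (x_i + \lambda_i + \lambda'_i)\phi_{\boldsymbol\lambda + \boldsymbol\lambda'}(\mathfrak{m}(\mathbf{x})) = \phi_{\boldsymbol\lambda + \boldsymbol\lambda'}(x_i \mathfrak{m}(\mathbf{x})), $$
using the inductive hypothesis in the middle step and the defining recursion of $\phi_{\boldsymbol\lambda + \boldsymbol\lambda'}$ in the last step.

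Finally, $\phi_\mathbf{0} = {\rm Id}$ is immediate from the defining recursion, since when $\boldsymbol\lambda = \mathbf{0}$ the hypothesis forces $\delta = \delta'$ and each $x_i$ is sent to itself. Taking $\boldsymbol\lambda' = -\boldsymbol\lambda$ and noting that swapping $\delta$ and $\delta'$ turns the original inner-derivation relation into its negative, so that the hypotheses of the first part apply symmetrically, then yields
$$ \phi_{\boldsymbol\lambda} \circ \phi_{-\boldsymbol\lambda} = \phi_{-\boldsymbol\lambda} \circ \phi_{\boldsymbol\lambda} = \phi_{\mathbf{0}} = {\rm Id}, $$
so $\phi_{\boldsymbol\lambda}^{-1} = \phi_{-\boldsymbol\lambda}$ and $\phi_{\boldsymbol\lambda}$ is a ring isomorphism. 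The only slightly delicate point in the argument is keeping track, in the inductive step, of which ring the products live in (the outer application of $\phi_{\boldsymbol\lambda'}$ must be interpreted via the multiplication of $\mathbb{F}[\mathbf{x};\sigma,\delta']$, not that of $\mathbb{F}[\mathbf{x};\sigma,\delta]$); everything else is routine.
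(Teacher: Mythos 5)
Your proof is correct and follows exactly the route the paper intends: the paper's own proof simply states that it "follows the same lines as in the proof of Proposition \ref{prop inverse of phi A}", and your monomial induction (base case $1$, then $x_i$, then $x_i\mathfrak{m}(\mathbf{x})$ via the multiplicativity of $\phi_{\boldsymbol\lambda'}$ in $\mathbb{F}[\mathbf{x};\sigma,\delta']$) is precisely that argument spelled out, including the correct bookkeeping of which ring each product is taken in.
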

\begin{proof}
The proof follows the same lines as in the proof of Proposition \ref{prop inverse of phi A}.
\end{proof}

\begin{remark}
Again, the results in this section hold automatically over non-free multivariate skew polynomial rings due to Corollary \ref{cor extend phi lambda to non-free}.
\end{remark}

\subsection{The map $ \phi_{\boldsymbol\lambda} $ preserves degrees}

As it was the case for linear transformation of variables, the map $ \phi_{\boldsymbol\lambda} $ preserves degrees, which follows immediately from Proposition \ref{prop inverse of phi lambda}, analogously to the proof of Proposition \ref{prop varphi A preserves degrees}.
 
\begin{proposition} \label{prop phi lambda preserves degrees}
Let $ \sigma : \mathbb{F} \longrightarrow \mathbb{F}^{n \times n} $ be a ring morphism and let $ \delta, \delta^\prime : \mathbb{F} \longrightarrow \mathbb{F}^n $ be $ \sigma $-derivations as in Definition \ref{def map phi lambda}, for a given $ \boldsymbol\lambda \in \mathbb{F}^n $. It holds that
$$ \deg(\phi_{\boldsymbol\lambda}(F(\mathbf{x}))) = \deg(F(\mathbf{x})), $$
for all $ F(\mathbf{x}) \in \mathbb{F}[\mathbf{x}; \sigma, \delta] $.
\end{proposition}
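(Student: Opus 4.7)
The plan is to mimic the two-step argument used for Proposition \ref{prop varphi A preserves degrees}, since the only structural input was that $ \varphi_A $ is a ring isomorphism whose inverse has the same form. First I would establish the one-sided inequality $ \deg(\phi_{\boldsymbol\lambda}(F(\mathbf{x}))) \leq \deg(F(\mathbf{x})) $ for all $ F(\mathbf{x}) \in \mathbb{F}[\mathbf{x}; \sigma, \delta] $, and then invoke the invertibility from Proposition \ref{prop inverse of phi lambda} to get the reverse inequality for free.

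For the one-sided inequality, I would proceed by induction on the degree of monomials $ \mathfrak{m}(\mathbf{x}) \in \mathcal{M} $. The case $ \mathfrak{m}(\mathbf{x}) = 1 $ is trivial, since $ \phi_{\boldsymbol\lambda}(1) = 1 $. For the induction step, write any monomial of degree $ d $ as $ x_i \mathfrak{m}^\prime(\mathbf{x}) $ with $ \deg(\mathfrak{m}^\prime(\mathbf{x})) = d-1 $. By Definition \ref{def map phi lambda}, $ \phi_{\boldsymbol\lambda}(x_i \mathfrak{m}^\prime(\mathbf{x})) = (x_i + \lambda_i) \phi_{\boldsymbol\lambda}(\mathfrak{m}^\prime(\mathbf{x})) $. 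Using the induction hypothesis $ \deg(\phi_{\boldsymbol\lambda}(\mathfrak{m}^\prime(\mathbf{x}))) \leq d-1 $, together with the fact that multiplication in $ \mathbb{F}[\mathbf{x}; \sigma, \delta^\prime] $ is degree-additive (see Section \ref{sec preliminaries} and \cite[Th.~1]{multivariateskew}) and that $ \deg(x_i + \lambda_i) \leq 1 $, we obtain $ \deg(\phi_{\boldsymbol\lambda}(x_i \mathfrak{m}^\prime(\mathbf{x}))) \leq d $. Extending by linearity via (\ref{eq def by linearity of phi lambda}) and bounding the degree of a sum by the maximum of the degrees of its summands yields $ \deg(\phi_{\boldsymbol\lambda}(F(\mathbf{x}))) \leq \deg(F(\mathbf{x})) $ for arbitrary $ F(\mathbf{x}) \in \mathbb{F}[\mathbf{x}; \sigma, \delta] $.

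The second step uses Proposition \ref{prop inverse of phi lambda}, which provides $ \phi_{-\boldsymbol\lambda} \circ \phi_{\boldsymbol\lambda} = {\rm Id} $. Applying the already-established inequality first to $ \phi_{\boldsymbol\lambda} $ and then to its inverse $ \phi_{-\boldsymbol\lambda} $ gives
$$ \deg(F(\mathbf{x})) = \deg(\phi_{-\boldsymbol\lambda}(\phi_{\boldsymbol\lambda}(F(\mathbf{x})))) \leq \deg(\phi_{\boldsymbol\lambda}(F(\mathbf{x}))) \leq \deg(F(\mathbf{x})), $$
forcing equality throughout.

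The argument has essentially no obstacle: the only point that requires mild attention is ensuring that the definition of $ \phi_{\boldsymbol\lambda} $ combined with degree-additivity of products in the skew ring genuinely bounds the degree of $ (x_i + \lambda_i) \phi_{\boldsymbol\lambda}(\mathfrak{m}^\prime(\mathbf{x})) $ by $ d $; this is automatic because neither the commutation rule (\ref{eq def inner product}) nor the expansion $ x_i \cdot G + \lambda_i \cdot G $ can raise the degree beyond $ 1 + \deg(\phi_{\boldsymbol\lambda}(\mathfrak{m}^\prime(\mathbf{x}))) $. Hence the proof is structurally identical to that of Proposition \ref{prop varphi A preserves degrees}, with $ \varphi_A $ replaced by $ \phi_{\boldsymbol\lambda} $ and $ A^{-1} $ by $ -\boldsymbol\lambda $.
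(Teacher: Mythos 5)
Your proof is correct and follows exactly the route the paper takes: the paper states that the result follows from Proposition \ref{prop inverse of phi lambda} analogously to the proof of Proposition \ref{prop varphi A preserves degrees}, i.e., first the one-sided inequality $\deg(\phi_{\boldsymbol\lambda}(F(\mathbf{x}))) \leq \deg(F(\mathbf{x}))$ from the definition, then the reverse inequality via $\phi_{-\boldsymbol\lambda} \circ \phi_{\boldsymbol\lambda} = {\rm Id}$. You merely spell out the inductive verification of the one-sided inequality that the paper leaves as ``easy to check.''
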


\section{Affine transformations of variables} \label{sec affine}

This section is devoted to defining general affine transformations of variables between the corresponding multivariate skew polynomial rings and showing that they are the only ring isomorphisms that preserve degrees.

We start with the following observation, whose proof is left to the reader.

\begin{proposition} \label{prop commutativity linear and translations}
Let $ \sigma, \tau : \mathbb{F} \longrightarrow \mathbb{F}^{n \times n} $ be ring morphisms and let $ \delta_\sigma, \delta_\tau : \mathbb{F} \longrightarrow \mathbb{F}^n $ be a $ \sigma $-derivation and a $ \tau $-derivation, respectively, such that
$$ \sigma (a) = A \tau (a) A^{-1} \quad \textrm{and} \quad \delta_\sigma(a) = A \delta_\tau(a), $$
for all $ a \in \mathbb{F} $, for some invertible matrix $ A \in \mathbb{F}^{n \times n} $. Next, let $ \delta_\tau^\prime : \mathbb{F} \longrightarrow \mathbb{F}^n $ be a $ \tau $-derivation such that
$$ \delta_\tau(a) - \delta_\tau^\prime(a) = \boldsymbol\lambda a - \tau(a) \boldsymbol\lambda, $$
for all $ a \in \mathbb{F} $, for some $ \boldsymbol\lambda \in \mathbb{F}^n $. Then it holds that
$$ \phi_{\boldsymbol\lambda} \circ \varphi_A = \varphi_A \circ \phi_{(A^{-1} \boldsymbol\lambda)}. $$
In other words, and using the notations in (\ref{eq notation linear transform}) and (\ref{eq notation translation}), we have that
$$ F((A \mathbf{x}) + \boldsymbol\lambda) = F(A(\mathbf{x} + A^{-1} \boldsymbol\lambda)), $$
for all $ F(\mathbf{x}) \in \mathbb{F}[\mathbf{x}; \sigma, \delta_\sigma] $, in the ring $ \mathbb{F}[\mathbf{x}; \tau, \delta_\tau^\prime] $.
\end{proposition}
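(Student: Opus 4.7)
The plan is to verify the identity by checking that both sides are the same ring morphism, which can be organized into three steps.

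First, I would verify the well-definedness of both compositions as ring isomorphisms between the same source ring $\mathbb{F}[\mathbf{x};\sigma,\delta_\sigma]$ and the same target ring $\mathbb{F}[\mathbf{x};\tau,\delta_\tau^\prime]$. For $\phi_{\boldsymbol\lambda} \circ \varphi_A$ this is immediate from Definitions \ref{def map phi A} and \ref{def map phi lambda} together with the two hypotheses on $(A,\sigma,\tau)$ and on $(\boldsymbol\lambda,\delta_\tau,\delta_\tau^\prime)$. For the right-hand side $\varphi_A \circ \phi_{(A^{-1}\boldsymbol\lambda)}$, the essential step is to identify the intermediate $\sigma$-derivation $\delta_\sigma^\prime$ that serves as the codomain of $\phi_{(A^{-1}\boldsymbol\lambda)}$ and the domain of $\varphi_A$: by Proposition \ref{prop conjugation of morphism and derivation}, $\varphi_A$ requires $\delta_\sigma^\prime = A\delta_\tau^\prime$, and by Definition \ref{def map phi lambda}, $\phi_{(A^{-1}\boldsymbol\lambda)}$ requires the inner-derivation identity $\delta_\sigma - \delta_\sigma^\prime = (A^{-1}\boldsymbol\lambda)(\cdot) - \sigma(\cdot)(A^{-1}\boldsymbol\lambda)$. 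Verifying that these two requirements are consistent is a direct algebraic manipulation using $\sigma = A \tau A^{-1}$, $\delta_\sigma = A\delta_\tau$ and $\delta_\tau - \delta_\tau^\prime = \boldsymbol\lambda(\cdot) - \tau(\cdot)\boldsymbol\lambda$.

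Second, both compositions are ring morphisms by Propositions \ref{prop phi A is ring morphism} and \ref{prop phi lambda is ring morphism}, and both are left $\mathbb{F}$-linear, fixing $\mathbb{F}$ pointwise. Therefore, by the inductive structure of Definitions \ref{def map phi A} and \ref{def map phi lambda}, it suffices to check equality on the generators $x_1, \ldots, x_n$ (and trivially on $1$). I would compute $(\phi_{\boldsymbol\lambda} \circ \varphi_A)(\mathbf{x})$ and $(\varphi_A \circ \phi_{(A^{-1}\boldsymbol\lambda)})(\mathbf{x})$ component-wise directly from those definitions, and confirm they agree as vectors in $\mathbb{F}[\mathbf{x};\tau,\delta_\tau^\prime]^n$.

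Finally, the extension from generators to arbitrary skew polynomials follows by induction on the degree of a monomial $\mathfrak{m}(\mathbf{x}) \in \mathcal{M}$, mirroring the inductive arguments in the proofs of Propositions \ref{prop phi A is ring morphism} and \ref{prop inverse of phi A}: given the equality on $\mathfrak{m}(\mathbf{x})$, the equality on $x_i \mathfrak{m}(\mathbf{x})$ follows from multiplicativity of both compositions together with the base case on $x_i$; the $\mathbb{F}$-linearity then carries the identity to a general $F(\mathbf{x}) \in \mathbb{F}[\mathbf{x}; \sigma, \delta_\sigma]$. The main obstacle is the bookkeeping in the first step: the three hypothesized compatibility identities interact nontrivially, and one must carefully identify the intermediate derivation $\delta_\sigma^\prime$ and ensure that the translation vector used in $\phi$ is the correct transform of $\boldsymbol\lambda$ under $A$, so that both compositions indeed land in the same codomain and agree on the generators. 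Once that compatibility is secured, the rest of the proof is essentially formal, and the statement extends automatically to non-free multivariate skew polynomial rings via Corollaries \ref{cor extend varphi to non-free} and \ref{cor extend phi lambda to non-free}.
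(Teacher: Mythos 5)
Your three-step plan (identify the intermediate derivation, check equality on the generators, then extend by induction on monomials using left $\mathbb{F}$-linearity and multiplicativity) is the right architecture, and it mirrors how the paper proves the analogous statements (Propositions \ref{prop phi A is ring morphism} and \ref{prop inverse of phi A}); note the paper itself leaves this particular proof to the reader. The problem is that the one substantive step, which you defer to ``a direct algebraic manipulation,'' does not come out the way you assert. Carrying it out: if $\delta_\sigma^\prime = A\delta_\tau^\prime$ (as the second $\varphi_A$ requires), then
$$ \delta_\sigma(a)-\delta_\sigma^\prime(a) = A\bigl(\delta_\tau(a)-\delta_\tau^\prime(a)\bigr) = (A\boldsymbol\lambda)\, a - A\tau(a)A^{-1}(A\boldsymbol\lambda) = (A\boldsymbol\lambda)\, a - \sigma(a)(A\boldsymbol\lambda), $$
so the translation vector on the $\sigma$-side is $A\boldsymbol\lambda$, not $A^{-1}\boldsymbol\lambda$. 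The generator check fails for the same reason: under the composition convention used in the proof of Proposition \ref{prop inverse of phi A} (namely $(\alpha\circ\beta)(F)=\alpha(\beta(F))$, which is also the only reading under which the domains and codomains in the statement line up), one gets $(\phi_{\boldsymbol\lambda}\circ\varphi_A)(\mathbf{x}) = \phi_{\boldsymbol\lambda}(A\mathbf{x}) = A(\mathbf{x}+\boldsymbol\lambda) = A\mathbf{x}+A\boldsymbol\lambda$, whereas $(\varphi_A\circ\phi_{(A^{-1}\boldsymbol\lambda)})(\mathbf{x}) = A\mathbf{x}+A^{-1}\boldsymbol\lambda$. These differ unless $A^2\boldsymbol\lambda=\boldsymbol\lambda$.

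What your argument actually establishes, once the computations are done, is $\phi_{\boldsymbol\lambda}\circ\varphi_A = \varphi_A\circ\phi_{(A\boldsymbol\lambda)}$, both sides realizing the substitution $F(\mathbf{x})\mapsto F(A(\mathbf{x}+\boldsymbol\lambda))$; the $A^{-1}$ in the displayed statement (and its tension with the ``in other words'' line, which instead describes the identity $\varphi_A\circ\phi_{\boldsymbol\lambda}=\phi_{(A^{-1}\boldsymbol\lambda)}\circ\varphi_A$) is an inconsistency in the statement itself that any worked-out proof must confront and resolve. Because your write-up asserts that both checkpoints succeed without performing either computation, as written it would ``verify'' an identity that is false as stated. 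Once the vector is corrected to $A\boldsymbol\lambda$ (equivalently, once the hypothesis on $\delta_\tau-\delta_\tau^\prime$ is restated with $A^{-1}\boldsymbol\lambda$ in place of $\boldsymbol\lambda$), the remaining steps of your plan --- agreement on generators, induction on monomials via multiplicativity, extension by left linearity, and the passage to non-free quotients via Corollaries \ref{cor extend varphi to non-free} and \ref{cor extend phi lambda to non-free} --- go through exactly as you describe.
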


Thus we deduce the following.

\begin{corollary} \label{cor linear transform and translations commute}
Any composition in any order of a finite collection of linear transformations and translations of variables over multivariate skew polynomial rings is the composition of one linear transformation and one translation.
\end{corollary}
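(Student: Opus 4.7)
The plan is to prove the statement by induction on the total number $k$ of linear transformations and translations appearing in the composition. The base case $k=1$ is immediate: a single $\varphi_A$ equals $\varphi_A \circ \phi_{\mathbf{0}}$ and a single $\phi_{\boldsymbol\lambda}$ equals $\varphi_I \circ \phi_{\boldsymbol\lambda}$, using that $\phi_{\mathbf{0}} = \mathrm{Id}$ (Proposition \ref{prop inverse of phi lambda}) and $\varphi_I = \mathrm{Id}$ (from before Corollary \ref{cor varphi is isomorphism}).

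For the inductive step, my strategy is to reduce any composition to the normal form $\varphi_A \circ \phi_{\boldsymbol\lambda}$, with all linear transformations pushed to the left and all translations to the right. To achieve this I would use a bubble-sort argument: count the number of \emph{inversions} in the composition, meaning the number of adjacent pairs in which a $\varphi$ appears immediately to the right of a $\phi$ (in order of evaluation). Each inversion can be eliminated by a single application of the commutation identity
$$ \phi_{\boldsymbol\lambda} \circ \varphi_A = \varphi_A \circ \phi_{(A^{-1}\boldsymbol\lambda)} $$
from Proposition \ref{prop commutativity linear and translations}, replacing a $\phi \circ \varphi$ block by a $\varphi \circ \phi$ block while keeping the overall composite map unchanged and strictly reducing the inversion count. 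After finitely many such rewrites the composition takes the form
$$ \varphi_{A_r} \circ \cdots \circ \varphi_{A_1} \circ \phi_{\boldsymbol\lambda_s} \circ \cdots \circ \phi_{\boldsymbol\lambda_1}. $$
Now the consecutive linear factors collapse iteratively via $\varphi_{B} \circ \varphi_{A} = \varphi_{AB}$ (Proposition \ref{prop inverse of phi A}) into a single $\varphi_A$, and the consecutive translation factors collapse via $\phi_{\boldsymbol\lambda'} \circ \phi_{\boldsymbol\lambda} = \phi_{\boldsymbol\lambda + \boldsymbol\lambda'}$ (Proposition \ref{prop inverse of phi lambda}) into a single $\phi_{\boldsymbol\lambda}$, yielding the desired form.

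The one point requiring care, and the place I expect to spend most of the bookkeeping, is verifying that the source and target rings match up at every rewrite, since $\varphi$ and $\phi$ are only defined between specific pairs $(\sigma,\delta)$ of morphism and derivation. However, the commutation identity in Proposition \ref{prop commutativity linear and translations} is formulated precisely so that the derivations on each side of the identity are consistent: conjugating the translation vector by $A^{-1}$ produces exactly the derivation on the intermediate ring that makes the right-hand composite well defined. Hence the inductive argument proceeds without genuine obstacle, and the normal form $\varphi_A \circ \phi_{\boldsymbol\lambda}$ is achieved; a final application of the same commutation identity also yields the equivalent form $\phi_{\boldsymbol\mu} \circ \varphi_A$ with $\boldsymbol\mu = A\boldsymbol\lambda$, should one prefer translations on the left.
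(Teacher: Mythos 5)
Your proof is correct and is essentially the argument the paper intends: the paper presents the corollary as an immediate consequence of the commutation identity in Proposition \ref{prop commutativity linear and translations} together with the composition rules of Propositions \ref{prop inverse of phi A} and \ref{prop inverse of phi lambda}, which is exactly the normal-form rewriting you carry out. One cosmetic point: your termination measure should count all (not merely adjacent) pairs in which a $\phi$ stands to the left of a $\varphi$ in the written composition, since a single swap such as $\phi_1\circ\phi_2\circ\varphi \mapsto \phi_1\circ\varphi\circ\phi_2'$ can create a new adjacent inversion while the total inversion count still strictly decreases.
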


This corollary motivates the following definition of affine transformations of variables over multivariate skew polynomial rings.

\begin{definition} \label{def affine transformations}
Let $ \sigma, \tau : \mathbb{F} \longrightarrow \mathbb{F}^{n \times n} $ and $ \delta_\sigma, \delta_\tau, \delta_\tau^\prime : \mathbb{F} \longrightarrow \mathbb{F}^n $ be as in Proposition \ref{prop commutativity linear and translations}. An affine transformation of variables between the multivariate skew polynomial rings $ \mathbb{F}[\mathbf{x}; \sigma, \delta_\sigma] $ and $ \mathbb{F}[\mathbf{x}; \tau, \delta_\tau^\prime] $ is a ring isomorphism of the form 
$$ \mathcal{T}_{A, \boldsymbol\lambda} = \phi_{\boldsymbol\lambda} \circ \varphi_A : \mathbb{F}[\mathbf{x}; \sigma, \delta_\sigma] \longrightarrow \mathbb{F}[\mathbf{x}; \tau, \delta_\tau^\prime]. $$
\end{definition}

Corollary \ref{cor linear transform and translations commute} also implies that the composition of affine transformations is again an affine transformation. However, the domains and codomains of such maps are not all the same when $ (\sigma,\delta_\sigma) \neq ({\rm Id},0) $ or $ \mathbb{F} $ is not commutative.

To conclude, we show that affine transformations constitute all left $ \mathbb{F} $-linear ring isomorphisms (thus left $ \mathbb{F} $-algebra isomorphisms) between multivariate skew polynomial rings that preserve degrees.

\begin{theorem} \label{th affine transformations}
Let $ \sigma, \tau : \mathbb{F} \longrightarrow \mathbb{F}^{n \times n} $ be ring morphisms and let $ \delta_\sigma, \delta_\tau^\prime : \mathbb{F} \longrightarrow \mathbb{F}^n $ be a $ \sigma $-derivation and a $ \tau $-derivation, respectively. A map $ \mathcal{T} : \mathbb{F}[\mathbf{x}; \sigma, \delta_\sigma] \longrightarrow \mathbb{F}[\mathbf{x}; \tau, \delta_\tau^\prime] $ is a left $ \mathbb{F} $-linear ring isomorphism such that $ \deg(\mathcal{T}(F(\mathbf{x}))) = \deg(F(\mathbf{x})) $, for all $ F(\mathbf{x}) \in \mathbb{F}[\mathbf{x}; \sigma, \delta_\sigma] $, if and only if, there exists a $ \tau $-derivation $ \delta_\tau : \mathbb{F} \longrightarrow \mathbb{F}^n $, an invertible matrix $ A \in \mathbb{F}^{n \times n} $ and a vector $ \boldsymbol\lambda \in \mathbb{F}^n $, all as in Proposition \ref{prop commutativity linear and translations}, such that $ \mathcal{T} = \mathcal{T}_{A, \boldsymbol\lambda} $.
\end{theorem}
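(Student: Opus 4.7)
The ``if'' direction is essentially immediate from the material of Sections \ref{sec linear transformations} and \ref{sec translations}: if the data satisfies Proposition \ref{prop commutativity linear and translations}, then both $\varphi_A$ and $\phi_{\boldsymbol\lambda}$ are left $\mathbb{F}$-linear ring isomorphisms that preserve degrees (Propositions \ref{prop phi A is ring morphism}, \ref{prop phi lambda is ring morphism}, \ref{prop varphi A preserves degrees}, \ref{prop phi lambda preserves degrees}, together with Corollary \ref{cor varphi is isomorphism} and Proposition \ref{prop inverse of phi lambda}), hence so is $\mathcal{T}_{A,\boldsymbol\lambda} = \phi_{\boldsymbol\lambda} \circ \varphi_A$. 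The substantive part is the converse, which is what the plan below addresses.

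Fix $\mathcal{T}$ as in the hypothesis. Since $\mathcal{T}$ is a ring morphism with $\mathcal{T}(1) = 1$ and is left $\mathbb{F}$-linear, it restricts to the identity on $\mathbb{F}$. Degree preservation then forces each $\mathcal{T}(x_i)$ to lie in the $\mathbb{F}$-span of $\{1, x_1, \ldots, x_n\}$, so we may write
\[
\mathcal{T}(\mathbf{x}) = A\mathbf{x} + \mathbf{v}
\]
for some $A \in \mathbb{F}^{n \times n}$ and $\mathbf{v} \in \mathbb{F}^n$. The same argument applied to $\mathcal{T}^{-1}$ (a degree-preserving left $\mathbb{F}$-linear ring isomorphism) yields $\mathcal{T}^{-1}(\mathbf{x}) = B\mathbf{x} + \mathbf{w}$; expanding the identity $\mathcal{T}(\mathcal{T}^{-1}(\mathbf{x})) = \mathbf{x}$ via left $\mathbb{F}$-linearity and matching $\mathbb{F}$-coefficients on the linearly independent set $\{1, x_1, \ldots, x_n\}$ shows $BA = I$, and symmetrically $AB = I$. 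Hence $A$ is invertible.

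Next, the heart of the proof is to apply $\mathcal{T}$ to the source commutation $\mathbf{x} a = \sigma(a)\mathbf{x} + \delta_\sigma(a)$. Using $\mathcal{T}(a) = a$ and the target commutation $\mathbf{x}a = \tau(a)\mathbf{x} + \delta_\tau^\prime(a)$, the left side expands as
\[
(A\mathbf{x} + \mathbf{v}) a = A\tau(a)\mathbf{x} + A\delta_\tau^\prime(a) + \mathbf{v} a,
\]
while the right side expands as $\sigma(a)A\mathbf{x} + \sigma(a)\mathbf{v} + \delta_\sigma(a)$. Matching the $\mathbf{x}$-part and the constant part (again via the linear independence of $\{1, x_1, \ldots, x_n\}$) produces
\[
\sigma(a) = A\tau(a)A^{-1} \qquad \text{and} \qquad \delta_\sigma(a) = A\delta_\tau^\prime(a) + \mathbf{v} a - \sigma(a)\mathbf{v}.
\]
Setting $\boldsymbol\lambda := A^{-1}\mathbf{v}$ and $\delta_\tau(a) := \delta_\tau^\prime(a) + \boldsymbol\lambda a - \tau(a)\boldsymbol\lambda$, one checks that $\delta_\tau$ is a $\tau$-derivation (sum of $\delta_\tau^\prime$ and an inner $\tau$-derivation), and the substitutions $\mathbf{v} = A\boldsymbol\lambda$ and $\sigma(a)\mathbf{v} = A\tau(a)\boldsymbol\lambda$ collapse the second displayed relation to $\delta_\sigma(a) = A\delta_\tau(a)$. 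Thus the triple $(A, \delta_\tau, \boldsymbol\lambda)$ meets all the hypotheses of Proposition \ref{prop commutativity linear and translations}.

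To conclude, Definition \ref{def affine transformations} yields $\mathcal{T}_{A,\boldsymbol\lambda}(\mathbf{x}) = \phi_{\boldsymbol\lambda}(\varphi_A(\mathbf{x})) = \phi_{\boldsymbol\lambda}(A\mathbf{x}) = A\mathbf{x} + A\boldsymbol\lambda = A\mathbf{x} + \mathbf{v} = \mathcal{T}(\mathbf{x})$. Since $\mathcal{T}$ and $\mathcal{T}_{A,\boldsymbol\lambda}$ are left $\mathbb{F}$-linear ring morphisms that agree on $\mathbb{F}$ and on each $x_i$, they agree on the whole source, which as a ring is generated by $\mathbb{F}$ and the variables. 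The only mild technical nuisance I anticipate is careful bookkeeping of matrix--vector--scalar products over a potentially non-commutative $\mathbb{F}$ (in particular the associativity identity $(A\boldsymbol\lambda)a = A(\boldsymbol\lambda a)$), and explicitly invoking the $\mathbb{F}$-basis of monomials to justify the coefficient-matching step; no deeper obstacle is expected.
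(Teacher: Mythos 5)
Your proof is correct and follows essentially the same route as the paper's: read off $ \mathcal{T}(\mathbf{x}) = A\mathbf{x} + \mathbf{v} $ from degree preservation and left $ \mathbb{F} $-linearity, compare the two expansions of $ \mathcal{T}(\mathbf{x}a) $ to extract $ \sigma(a) = A\tau(a)A^{-1} $ together with the derivation data, and conclude by recursion on monomials (equivalently, generation of the ring by $ \mathbb{F} $ and the variables). Your explicit verification that $ A $ is invertible via $ \mathcal{T}^{-1} $, and your careful bookkeeping $ \boldsymbol\lambda = A^{-1}\mathbf{v} $ so that $ \phi_{\boldsymbol\lambda}(\varphi_A(\mathbf{x})) = A\mathbf{x} + A\boldsymbol\lambda = A\mathbf{x} + \mathbf{v} $, are welcome refinements of details the paper leaves to the reader.
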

\begin{proof}
The direct implication follows from Definition \ref{def affine transformations} and Propositions \ref{prop phi A is ring morphism}, \ref{prop varphi A preserves degrees}, \ref{prop phi lambda is ring morphism} and \ref{prop phi lambda preserves degrees}. For the reversed implication, the preservation of degrees for the monomials $ x_1, x_2, \ldots, x_n $ implies that there exist an invertible matrix $ A \in \mathbb{F}^{n \times n} $ and a vector $ \boldsymbol\lambda \in \mathbb{F}^n $ such that
$$ \mathcal{T}(\mathbf{x}) = A \mathbf{x} + \boldsymbol\lambda. $$
Let $ a \in \mathbb{F} $. In the ring $ \mathbb{F}[\mathbf{x}; \sigma, \delta_\sigma] $, we have that $ \mathbf{x} a = \sigma(a) \mathbf{x} + \delta_\sigma(a) $ by (\ref{eq def inner product}). Hence 
\begin{equation}
\mathcal{T}(\mathbf{x} a) = \mathcal{T}(\sigma(a) \mathbf{x} + \delta_\sigma(a)) = \sigma(a) \mathcal{T}(\mathbf{x}) + \delta_\sigma(a) = \sigma(a) A \mathbf{x} + \sigma(a) \boldsymbol\lambda + \delta_\sigma(a).
\label{eq proof preserve deg are affine 1}
\end{equation}
Next, since $ \mathcal{T} $ is a left $ \mathbb{F} $-linear ring isomorphism, we also have that
\begin{equation}
\mathcal{T}(\mathbf{x} a) = \mathcal{T}(\mathbf{x}) \mathcal{T}(a1) = \mathcal{T}(\mathbf{x}) a = A \mathbf{x} a + \boldsymbol\lambda a = A \tau(a) \mathbf{x} + A \delta_\tau^\prime(a) + \boldsymbol\lambda a.
\label{eq proof preserve deg are affine 2}
\end{equation}
Comparing (\ref{eq proof preserve deg are affine 1}) and (\ref{eq proof preserve deg are affine 2}), the reader may check that there exists a $ \tau $-derivation $ \delta_\tau : \mathbb{F} \longrightarrow \mathbb{F}^n $, namely $ \delta_\tau = A^{-1} \delta_\sigma $, such that $ \sigma $, $ \tau $, $ \delta_\sigma $, $ \delta_\tau $ and $ \delta_\tau^\prime $ are all related as in Proposition \ref{prop commutativity linear and translations} for the matrix $ A \in \mathbb{F}^{n \times n} $ and the vector $ \boldsymbol\lambda \in \mathbb{F}^n $.

Finally, proceeding recursively on monomials $ \mathfrak{m}(\mathbf{x}) \in \mathcal{M} $ as in the proofs of Propositions \ref{prop phi A is ring morphism} and \ref{prop phi lambda is ring morphism}, the reader may check that $ \mathcal{T} = \phi_{\boldsymbol\lambda} \circ \varphi_A = \mathcal{T}_{A, \boldsymbol\lambda} $, and we are done.
\end{proof}

\section{Classification over finite fields} \label{sec classification finite fields}

With all the tools gathered up to this point, we are ready to show that all multivariate skew polynomial rings over a finite field $ \mathbb{F}_q $ are naturally isomorphic as $ \mathbb{F}_q $-algebras to some multivariate skew polynomial ring of the form $ \mathbb{F}_q[\mathbf{x}; {\rm diag}(\sigma_1, \sigma_2, \ldots, \sigma_n), 0] $ (Theorem \ref{th reduction finite fields}). Even further, two such representations only differ in a permutation of the field automorphisms $ \sigma_1, \sigma_2, \ldots, \sigma_n : \mathbb{F}_q \longrightarrow \mathbb{F}_q $, providing a full classification of multivariate skew polynomial rings over finite fields (Theorem \ref{th classification}). 

For ease of notation, given field automorphisms $ \sigma_1, \sigma_2, \ldots, \sigma_n : \mathbb{F}_q \longrightarrow \mathbb{F}_q $, we will denote
\begin{equation}
\mathbb{F}_q[\mathbf{x}; \sigma_1, \sigma_2, \ldots, \sigma_n] = \mathbb{F}_q[\mathbf{x}; {\rm diag}(\sigma_1, \sigma_2, \ldots, \sigma_n), 0].
\label{eq def diagonal ring}
\end{equation}

The next result extends the discussion given in \cite[Sec. 8.3]{cohn} (see also \cite[Prop. 40]{linearizedRS}) from the case $ n = 1 $ to the general case.

\begin{theorem} \label{th reduction finite fields}
Let $ \sigma : \mathbb{F}_q \longrightarrow \mathbb{F}_q^{n \times n} $ be a ring morphism and let $ \delta : \mathbb{F}_q \longrightarrow \mathbb{F}_q^n $ be a $ \sigma $-derivation. By Theorem \ref{theorem description matrix morph}, there exist an invertible matrix $ A \in \mathbb{F}_q^{n \times n} $ and field automorphisms $ \sigma_1, \sigma_2, \ldots, \sigma_n : \mathbb{F}_q \longrightarrow \mathbb{F}_q $ such that
$$ \sigma(a) = A \tau(a) A^{-1}, $$
for all $ a \in \mathbb{F}_q $, where $ \tau = {\rm diag}(\sigma_1, \sigma_2, \ldots, \sigma_n) $. Since the map $ A^{-1} \delta $ is a $ \tau $-derivation by Proposition \ref{prop conjugation of morphism and derivation}, we deduce from Theorem \ref{theorem description vector der} that there exists a vector $ \boldsymbol\lambda \in \mathbb{F}_q^n $ such that
$$ \delta(a) = A(\boldsymbol\lambda a - \tau(a) \boldsymbol\lambda), $$
for all $ a \in \mathbb{F}_q $. Therefore, by the results in Sections \ref{sec linear transformations} and \ref{sec translations}, the affine transformation of variables (Section \ref{sec affine})
$$ \mathcal{T}_{A, \boldsymbol\lambda} = \phi_{\boldsymbol\lambda} \circ \varphi_A : \mathbb{F}_q[\mathbf{x}; \sigma, \delta] \longrightarrow \mathbb{F}_q[\mathbf{x}; \sigma_1, \sigma_2, \ldots, \sigma_n] $$
is an $ \mathbb{F}_q $-algebra isomorphism that preserves evaluations (see Propositions \ref{prop varphi preserves evaluations} and \ref{prop phi lambda evaluations}) and degrees (see Propositions \ref{prop varphi A preserves degrees} and \ref{prop phi lambda preserves degrees}) and that may naturally be extended to non-free multivariate skew polynomial rings (see Corollaries \ref{cor extend varphi to non-free} and \ref{cor extend phi lambda to non-free}).
\end{theorem}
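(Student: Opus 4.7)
The plan is to produce the pair $(A, \boldsymbol\lambda)$ explicitly from the structural theorems of Section~\ref{sec explicit finite fields} and then glue together the corresponding linear transformation and translation from Sections~\ref{sec linear transformations} and~\ref{sec translations}. First, I would apply Theorem~\ref{theorem description matrix morph} directly to $\sigma : \mathbb{F}_q \longrightarrow \mathbb{F}_q^{n\times n}$ to obtain an invertible $A \in \mathbb{F}_q^{n \times n}$ and field automorphisms $\sigma_1, \sigma_2, \ldots, \sigma_n$ such that $\sigma(a) = A \tau(a) A^{-1}$ with $\tau = \operatorname{diag}(\sigma_1, \ldots, \sigma_n)$. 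The proof of that theorem is algorithmic (Jordan form of $\sigma(c)$ for a primitive $c \in \mathbb{F}_q^*$), so $A$ is explicit.

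Next, I would transport the derivation. Proposition~\ref{prop conjugation of morphism and derivation} (applied in the reverse direction, with $A^{-1}$ in place of $A$) shows that $\delta_\tau := A^{-1} \delta$ is a $\tau$-derivation, i.e., a $(\tau, \mathrm{Id})$-derivation in the sense of Definition~\ref{def vector der}. Since $\mathrm{Id}(b) = bI$ is a scalar matrix and $\tau(a)$ is any matrix, they commute: $\tau(a)\mathrm{Id}(b) = \mathrm{Id}(b)\tau(a)$ for all $a,b \in \mathbb{F}_q$. Hence the commutativity hypothesis of item 1 of Theorem~\ref{theorem description vector der} is automatically satisfied, and item 2 yields a vector $\boldsymbol\lambda \in \mathbb{F}_q^n$ with $\delta_\tau(a) = (\mathrm{Id}(a) - \tau(a))\boldsymbol\lambda = \boldsymbol\lambda a - \tau(a)\boldsymbol\lambda$. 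Multiplying by $A$ on the left recovers $\delta(a) = A(\boldsymbol\lambda a - \tau(a)\boldsymbol\lambda)$, as stated. Moreover, the explicit formula $\boldsymbol\lambda = (T-S)^{q-2}\delta_\tau(c)$ from the proof of Theorem~\ref{theorem description vector der}, with $S = \tau(c)$ and $T = \mathrm{Id}(c) = cI$, gives $\boldsymbol\lambda$ computationally.

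Finally, I would assemble the affine transformation. With $(\tau, \delta_\tau)$ so defined, the linear transformation $\varphi_A : \mathbb{F}_q[\mathbf{x}; \sigma, \delta] \longrightarrow \mathbb{F}_q[\mathbf{x}; \tau, \delta_\tau]$ of Definition~\ref{def map phi A} is admissible, and since $\delta_\tau - 0 = \boldsymbol\lambda a - \tau(a)\boldsymbol\lambda$ is inner with parameter $\boldsymbol\lambda$, the translation $\phi_{\boldsymbol\lambda} : \mathbb{F}_q[\mathbf{x}; \tau, \delta_\tau] \longrightarrow \mathbb{F}_q[\mathbf{x}; \tau, 0] = \mathbb{F}_q[\mathbf{x}; \sigma_1, \ldots, \sigma_n]$ of Definition~\ref{def map phi lambda} is also admissible. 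The composition $\mathcal{T}_{A,\boldsymbol\lambda} = \phi_{\boldsymbol\lambda} \circ \varphi_A$ is then an $\mathbb{F}_q$-algebra isomorphism (left $\mathbb{F}_q$-linearity is immediate from the definitions; the ring isomorphism property follows by combining Propositions~\ref{prop phi A is ring morphism}, \ref{prop inverse of phi A}, \ref{prop phi lambda is ring morphism} and~\ref{prop inverse of phi lambda}) that preserves evaluations (Propositions~\ref{prop varphi preserves evaluations} and~\ref{prop phi lambda evaluations}) and degrees (Propositions~\ref{prop varphi A preserves degrees} and~\ref{prop phi lambda preserves degrees}). Extension to non-free quotients is then formal via Corollaries~\ref{cor extend varphi to non-free} and~\ref{cor extend phi lambda to non-free}.

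The proof is essentially bookkeeping: the two substantive obstacles have already been cleared in Theorems~\ref{theorem description matrix morph} and~\ref{theorem description vector der}. The one small subtlety worth flagging explicitly in the write-up is the reindexing of Proposition~\ref{prop conjugation of morphism and derivation} to conclude that $A^{-1}\delta$ is a $\tau$-derivation (it is stated going from $\tau$ to $\sigma = A\tau A^{-1}$, whereas we need the reverse direction); this is immediate upon replacing $A$ by $A^{-1}$ and swapping the roles of $\sigma$ and $\tau$, but should be mentioned for clarity.
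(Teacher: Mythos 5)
Your proposal is correct and follows essentially the same route as the paper, whose argument is in fact embedded in the theorem statement itself: diagonalize $\sigma$ via Theorem \ref{theorem description matrix morph}, transport $\delta$ to the $\tau$-derivation $A^{-1}\delta$ via Proposition \ref{prop conjugation of morphism and derivation}, realize it as an inner $(\tau,{\rm Id})$-derivation via Theorem \ref{theorem description vector der}, and compose $\varphi_A$ with $\phi_{\boldsymbol\lambda}$. Your explicit checks of the commutativity hypothesis in Theorem \ref{theorem description vector der} and of the direction in which Proposition \ref{prop conjugation of morphism and derivation} is applied are exactly the right details to make the bookkeeping airtight.
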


Theorem \ref{th reduction finite fields} says that we may simplify $ \mathbb{F}_q[\mathbf{x}; \sigma, \delta] $ to $ \mathbb{F}_q[\mathbf{x}; \sigma_1, \sigma_2, \ldots, \sigma_n] $ via an affine transformation of variables. The next theorem shows that two such reductions only differ by a permutation of $ \sigma_1, \sigma_2, \ldots, \sigma_n $.

\begin{theorem} \label{th classification}
Let $ \sigma_1, \sigma_2, \ldots, \sigma_n, \tau_1, \tau_2, \ldots, \tau_n : \mathbb{F}_q \longrightarrow \mathbb{F}_q $ be field automorphisms. There exists an affine transformation
$$ \mathcal{T} : \mathbb{F}_q[\mathbf{x}; \sigma_1, \sigma_2, \ldots, \sigma_n] \longrightarrow \mathbb{F}_q[\mathbf{x}; \tau_1, \tau_2, \ldots, \tau_n] $$
if, and only if, there exists a bijection $ s : \{ 1,2, \ldots, n \} \longrightarrow \{ 1,2, \ldots, n \} $ such that $ \tau_i = \sigma_{s(i)} $, for $ i = 1,2, \ldots, n $.
\end{theorem}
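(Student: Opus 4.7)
The plan is to unpack the definition of an affine transformation via Proposition~\ref{prop commutativity linear and translations}, reduce the problem to a statement about similarity of diagonal matrices of Frobenius evaluations, and then exploit the fact that a field automorphism of $\mathbb{F}_q$ is uniquely determined by its image on a primitive element.

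For the easy (reverse) implication, given a bijection $s$ with $\tau_i = \sigma_{s(i)}$, let $P \in \mathbb{F}_q^{n \times n}$ be the permutation matrix sending the $i$th standard basis vector to the $s(i)$th one. A direct computation shows that
$$ P \, {\rm diag}(\tau_1(a), \tau_2(a), \ldots, \tau_n(a))\, P^{-1} = {\rm diag}(\sigma_1(a), \sigma_2(a), \ldots, \sigma_n(a)) $$
for all $a \in \mathbb{F}_q$, and both derivations are $0$, so the compatibility conditions of Proposition~\ref{prop commutativity linear and translations} hold with $\boldsymbol\lambda = 0$. Thus $\mathcal{T}_{P,0} = \varphi_P$ is the desired affine transformation.

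For the forward implication, suppose $\mathcal{T} = \mathcal{T}_{A, \boldsymbol\lambda}$ exists with $A$ invertible and $\boldsymbol\lambda \in \mathbb{F}_q^n$. The compatibility condition forced by Proposition~\ref{prop commutativity linear and translations} applied to the diagonal morphisms $\sigma = {\rm diag}(\sigma_1, \ldots, \sigma_n)$ and $\tau = {\rm diag}(\tau_1, \ldots, \tau_n)$ gives
$$ {\rm diag}(\sigma_1(a), \ldots, \sigma_n(a)) = A \, {\rm diag}(\tau_1(a), \ldots, \tau_n(a))\, A^{-1}, $$
for all $a \in \mathbb{F}_q$. In particular, for every $a \in \mathbb{F}_q$, the two diagonal matrices are similar, so their multisets of diagonal entries (which are their eigenvalues) coincide.

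Now specialize to a primitive element $c \in \mathbb{F}_q^*$: the multisets $\{\sigma_1(c), \sigma_2(c), \ldots, \sigma_n(c)\}$ and $\{\tau_1(c), \tau_2(c), \ldots, \tau_n(c)\}$ agree. By Proposition~\ref{prop description morphisms univariate}, each $\sigma_i$ and each $\tau_i$ is a power of the Frobenius, so $\sigma_i(c) = c^{p^{j_i}}$ and $\tau_i(c) = c^{p^{k_i}}$ for some exponents modulo $m$ (where $q = p^m$). Since $c$ has multiplicative order $q - 1 = p^m - 1$, the values $c^{p^{j}}$ for $0 \leq j \leq m-1$ are pairwise distinct, and hence an automorphism of $\mathbb{F}_q$ is uniquely determined by its image on $c$. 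Consequently the equality of multisets of evaluations at $c$ promotes to equality of multisets of automorphisms, which is precisely the existence of a bijection $s : \{1, \ldots, n\} \longrightarrow \{1, \ldots, n\}$ with $\tau_i = \sigma_{s(i)}$.

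The main (mild) obstacle is simply verifying that the compatibility conditions of Proposition~\ref{prop commutativity linear and translations} force the diagonal similarity identity above; everything after that is an application of elementary linear algebra (eigenvalues of diagonal matrices) plus the explicit description of $\mathrm{Gal}(\mathbb{F}_q/\mathbb{F}_p)$. The vector $\boldsymbol\lambda$ plays no role in the forward argument, as translations do not interact with the diagonal structure of $\sigma$ and $\tau$.
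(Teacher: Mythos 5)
Your proposal is correct and follows essentially the same route as the paper: reduce to similarity of the diagonal matrices $\mathrm{diag}(\sigma_1(c),\ldots,\sigma_n(c))$ and $\mathrm{diag}(\tau_1(c),\ldots,\tau_n(c))$ at a primitive element $c$, match eigenvalue multisets, and use that an automorphism of $\mathbb{F}_q$ is determined by its value at $c$. The only difference is cosmetic: you make the "trivial" reverse implication explicit via a permutation matrix, which is a nice touch but not a different argument.
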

\begin{proof}
The reversed implication is trivial. We now prove the direct implication. If there exists such an affine transformation, then by Theorem \ref{th affine transformations}, there exists an invertible matrix $ A \in \mathbb{F}_q^{n \times n} $ such that
$$ \left( \begin{array}{cccc}
\sigma_1(c) & 0 & \ldots & 0 \\
0 & \sigma_2(c) & \ldots & 0 \\
\vdots & \vdots & \ddots & \vdots \\
0 & 0 & \ldots & \sigma_n(c)
\end{array} \right) = A \left( \begin{array}{cccc}
\tau_1(c) & 0 & \ldots & 0 \\
0 & \tau_2(c) & \ldots & 0 \\
\vdots & \vdots & \ddots & \vdots \\
0 & 0 & \ldots & \tau_n(c)
\end{array} \right) A^{-1}, $$
where $ c \in \mathbb{F}_q^* $ is a primitive element. Since the eigenvalues of a square matrix are invariant by similarity of matrices, we deduce that there exists a bijection $ s : \{ 1,2, \ldots, n \} \longrightarrow \{ 1,2, \ldots, n \} $ such that $ \tau_i(c) = \sigma_{s(i)}(c) $, for $ i = 1,2, \ldots, n $. Now, if $ \tau_i(c) = \sigma_{s(i)}(c) $, then $ \tau_i(a) = \sigma_{s(i)}(a) $, for all $ a \in \mathbb{F}_q $, since $ c \in \mathbb{F}_q^* $ is a primitive element, and we are done.
\end{proof}

In conclusion, if we identify multivariate skew polynomial rings that are isomorphic by an $ \mathbb{F}_q $-algebra isomorphism that preserves degrees (i.e., by an affine transformation of variables), then such classes are represented by the rings $ \mathbb{F}_q[\mathbf{x}; \sigma_1, \sigma_2, \ldots, \sigma_n] $ modulo permutations of the field automorphisms $ \sigma_1, \sigma_2, \ldots, \sigma_n $.

Finally, Theorems \ref{theorem description vector der}, \ref{theorem description matrix morph} and \ref{th reduction finite fields} give an explicit method to find the reduced form $ \mathbb{F}_q[\mathbf{x}; \sigma_1, \sigma_2, \ldots, \sigma_n] $ from $ \sigma(c) \in \mathbb{F}_q^{n \times n} $ and $ \delta(c) \in \mathbb{F}_q^n $, for a primitive element $ c \in \mathbb{F}_q^* $.

\section*{Acknowledgement}

The author gratefully acknowledges the support from The Independent Research Fund Denmark (Grant No. DFF-7027-00053B).

%
%
%
%\section*{Appendices}
%
%\appendix
%
%
%\section{Appendix 1?}

%\nocite*{}
%\bibliographystyle{plain}
%\bibliography{multivariatebib}

\begin{thebibliography}{10}

\bibitem{artin-lectures}
E.~Artin.
\newblock {\em Galois {T}heory}.
\newblock Notre Dame Mathematical Lectures, no. 2. University of Notre Dame,
  Notre Dame, Ind., second edition, 1944.

\bibitem{beard2}
J.~T.~B. Beard, Jr.
\newblock Matrix fields over finite extensions of prime fields.
\newblock {\em Duke Math. J.}, 39(3):475--484, 09 1972.

\bibitem{beard1}
J.~T.~B. Beard, Jr.
\newblock Matrix fields over prime fields.
\newblock {\em Duke Math. J.}, 39(2):313--321, 06 1972.

\bibitem{birkhoff}
G.~Birkhoff and S.~Mac~Lane.
\newblock {\em A survey of modern algebra}.
\newblock New York : Macmillan, New York, 4 edition, 1965.

\bibitem{cohn}
P.~M. Cohn.
\newblock {\em Free rings and their relations}.
\newblock London: Academic Press, 1971.

\bibitem{gabidulin}
E.~M. Gabidulin.
\newblock Theory of codes with maximum rank distance.
\newblock {\em Problems Information Transmission}, 21, 1985.

\bibitem{skewRM}
W.~Geiselmann and F.~Ulmer.
\newblock Skew {R}eed-{M}uller codes.
\newblock In {\em Rings, Modules and Codes}, volume 727, pages 107--116.
  Contemporary Mathematics, 2019.

\bibitem{lam}
T.~Y. Lam.
\newblock A general theory of {V}andermonde matrices.
\newblock {\em Expositiones Mathematicae}, 4:193--215, 1986.

\bibitem{lam-leroy}
T.~Y. Lam and A.~Leroy.
\newblock Vandermonde and {W}ronskian matrices over division rings.
\newblock {\em Journal of Algebra}, 119(2):308--336, 1988.

\bibitem{hilbert90}
T.~Y. Lam and A.~Leroy.
\newblock Hilbert 90 theorems over divison rings.
\newblock {\em Transactions of the American Mathematical Society},
  345(2):595--622, 1994.

\bibitem{lang-under}
S.~Lang.
\newblock {\em Undergraduate Algebra}.
\newblock New York, itd: Springer Verlag, third edition, 2005.

\bibitem{leroy-pol}
A.~Leroy.
\newblock Pseudolinear transformations and evaluation in {O}re extensions.
\newblock {\em Bulletin of the Belgian Mathematical Society}, 2(3):321--347,
  1995.

\bibitem{lidl}
R.~Lidl and H.~Niederreiter.
\newblock {\em Finite Fields}, volume~20.
\newblock Encyclopedia of Mathematics and its Applications. Addison-Wesley,
  Amsterdam, 1983.

\bibitem{lin-multivariateskew}
U.~Mart{\'i}nez-Pe{\~n}as.
\newblock Linearized multivariate skew polynomials and {H}ilbert 90 theorems
  with multivariate norms.
\newblock In {\em Proc. XVI EACA, Zaragoza - Encuentros de Álgebra
  Computacional y Aplicaciones}, pages 119--122, 2018.

\bibitem{linearizedRS}
U.~Mart{\'i}nez-Pe{\~n}as.
\newblock Skew and linearized {R}eed–{S}olomon codes and maximum sum rank
  distance codes over any division ring.
\newblock {\em Journal of Algebra}, 504:587--612, 2018.

\bibitem{multivariateskew}
U.~Mart{\'i}nez-Pe{\~n}as and F.~R. Kschischang.
\newblock Evaluation and interpolation over multivariate skew polynomial rings.
\newblock {\em Journal of Algebra}, 525:111--139, 2019.

\bibitem{milne}
J.~S. Milne.
\newblock {\em Class Field Theory}.
\newblock v4.02 edition, 2013.

\bibitem{moore}
E.~H. Moore.
\newblock A two-fold generalization of {F}ermat's theorem.
\newblock {\em Bulletin of the American Mathematical Society}, 2(7):189--199,
  1896.

\bibitem{muller}
D.~E. {Muller}.
\newblock Application of boolean algebra to switching circuit design and to
  error detection.
\newblock {\em Transactions of the I.R.E. Professional Group on Electronic
  Computers}, EC-3(3):6--12, Sep. 1954.

\bibitem{noether}
E.~Noether.
\newblock Der {H}auptgeschlechtssatz f{\"u}r relativ-{G}aloissche
  {Z}ahlk{\"o}rper.
\newblock {\em Mathematische Annalen}, 108(1):411--419, Dec 1933.

\bibitem{orespecial}
O.~Ore.
\newblock On a special class of polynomials.
\newblock {\em Transactions American Mathematical Society}, 35(3):559--584,
  1933.

\bibitem{ore}
O.~Ore.
\newblock Theory of non-commutative polynomials.
\newblock {\em Annals of Mathematics (2)}, 34(3):480--508, 1933.

\bibitem{reed}
I.~{Reed}.
\newblock A class of multiple-error-correcting codes and the decoding scheme.
\newblock {\em Transactions of the IRE Professional Group on Information
  Theory}, 4(4):38--49, Sep. 1954.

\bibitem{skolem}
Th. {Skolem}.
\newblock Zur {T}heorie der assoziativen {Z}ahlensysteme.
\newblock {Skrifter Oslo 1927, Nr. 12, 50 s. (1927).}, 1927.

\end{thebibliography}

\end{document}